\documentclass[sts]{imsart}

\usepackage{geometry}
\geometry{margin=1in}

\usepackage{amssymb,amsbsy,amsmath,amscd,amsthm,amsfonts,MnSymbol}
\usepackage{cite}
\usepackage[utf8]{inputenc}
\usepackage{enumerate,hyperref}
\usepackage{dsfont}
\usepackage{graphicx}
\usepackage{tikz}

\newtheorem{definition}{Definition}
\newtheorem{theorem}{Theorem}
\newtheorem{proposition}{Proposition}

\newtheorem{example}{Example}
\newtheorem{lemma}{Lemma}
\newtheorem{remark}{Remark}

\newtheorem{corollary}{Corollary}

\newtheorem*{openquestion}{Open question}

\newcommand{\R}{\mathbb R}
\newcommand{\PP}{\mathbb P}
\newcommand{\E}{\mathbb E}

\newcommand{\N}{\mathbb N}

\newcommand{\tr}{\mathrm{tr}}

\newcommand{\var}{\mathrm{var}}

\newcommand{\cov}{\mathrm{cov}}

\newcommand*\diff{\mathop{}\!\mathrm{d}}

\newcommand*\dist{\mathop{}\!\mathrm{d}}

\newcommand{\inter}{\mathrm{int}}

\newcommand{\cl}{\mathrm{cl}}

\newcommand{\DS}{\displaystyle}

\DeclareMathOperator*{\Argmin}{Argmin}

\DeclareMathOperator*{\Argmax}{Argmax}

\DeclareMathOperator*{\argmin}{argmin}

\DeclareMathOperator*{\argmax}{argmax}


\begin{document}

\begin{frontmatter}

\title{Asymptotics of constrained $M$-estimation under convexity}
\runtitle{Asymptotics of convex $M$-estimation}

\author{Victor-Emmanuel Brunel \thanks{CREST-ENSAE, victor.emmanuel.brunel@ensae.fr}}

\runauthor{V.-E. Brunel}

\setattribute{abstractname}{skip} {{\bf Abstract:} } 
\begin{abstract}
$M$-estimation, \textit{aka} empirical risk minimization, is at the heart of statistics and machine learning: Classification, regression, location estimation, etc. Asymptotic theory is well understood when the loss satisfies some smoothness assumptions and its derivatives are dominated locally. However, these conditions are typically technical and can be too restrictive or heavy to check. Here, we consider the case of a convex loss function, which may not even be differentiable: We establish an asymptotic theory for $M$-estimation with convex loss (which needs not be differentiable) under convex constraints. We show that the asymptotic distributions of the corresponding $M$-estimators depend on an interplay between the loss function and the boundary structure of the set of constraints. 
We extend our results to $U$-estimators, building on the asymptotic theory of $U$-statistics. Applications of our work include, among other, robust location/scatter estimation, estimation of deepest points relative to depth functions such as Oja's depth, etc. 

\end{abstract}

\begin{keyword}
Constrained $M$-estimation, empirical risk minimization, convex loss, convex analysis, consistency, asymptotic distribution, $U$-statistics, metric projections, directional derivatives.
\end{keyword}

\end{frontmatter}

\section{Introduction}

\subsection{Preliminaries} \label{sec:Preliminaries}
We consider a sequence $X_1,X_2,\ldots$ of independent, identically distributed (iid) random variables taking values in some measurable space $(E,\mathcal E)$ and we denote by $P$ their distribution. Let $\Theta_0\subseteq \R^d$ be a non-empty set, which can be interpreted as a parameter space. Here, $d\geq 1$ is a fixed integer representing the parameter dimension. 

Let $\phi:E\times\Theta_0\to\R$ be a function such that $\phi(\cdot,\theta)$ is measurable and in $L^1(P)$, for all $\theta\in\Theta_0$. Set $\Phi(\theta)=\E[\phi(X_1,\theta)]$, for all $\theta\in\Theta_0$. The goal of $M$-estimation (or empirical risk minimization) is to estimate a minimizer of $\Phi$ when only finitely many samples from $P$ are available. 
For $n\geq 1$ and $\theta\in\Theta_0$, let $\DS \Phi_n(\theta)=\frac{1}{n}\sum_{i=1}^n \phi(X_i,\theta)$. For $\theta\in\Theta$, $\Phi(\theta)$ is called the population risk evaluated at $\theta$, while $\Phi_n(\theta)$ is the empirical risk based on $X_1,\ldots,X_n$. The idea of $M$-estimation is to use the random function $\Phi_n$ as a surrogate for $\Phi$ and estimate a minimizer of $\Phi$ by selecting a minimizer of $\Phi_n$. When minimization is performed over the whole parameter space $\Theta_0$, we talk about unconstrained $M$-estimation, or simply $M$-estimation. If we minimize $\Phi_n$ on a closed subset $\Theta$ of $\Theta_0$, we talk about constrained $M$-estimation with $\Theta$ as the set of constraints. In this work, we are concerned with the latter.

Let $\Theta^*\subseteq \Theta$ be the set of minimizers of $\Phi$ on $\Theta$ and assume it is not empty. For all $n\geq 1$, let $\hat \theta_n$ be a minimizer of $\Phi_n$ (provided it exists and can be chosen in a measurable way - see Section~\ref{Sec:existence} below). Standard asymptotic theory questions (weak or strong) consistency and aims at determining the asymptotic distribution of a rescaled version of the $M$-estimator. That is, does $\dist(\hat\theta_n,\Theta^*)$ converge (in probability or almost surely) to zero as $n\to\infty$? Here, $\dist(\hat\theta_n,\Theta^*)$ is simply the distance of $\hat\theta_n$ to the non-empty set $\Theta^*$. If $\Theta^*$ reduces to a singleton $\Theta^*=\{\theta^*\}$, does $\sqrt \rho_n(\hat\theta_n-\theta^*)$ converge in distribution for some rescaling factor $\rho_n\xrightarrow[n\to\infty]{}\infty$ and if so, what is the asymptotic distribution?

It may be convenient to consider, instead of $\hat\theta_n$, a near minimizer of $\Phi_n$, that is, a random variable $\tilde\theta_n$ satisfying $\Phi_n(\tilde\theta_n)\leq \inf_{\theta\in\Theta}\Phi_n(\theta)+\varepsilon_n$ where $\varepsilon_n$ is a (possibly random) small enough error term. For simplicity, here, we only study the properties of exact empirical risk minimizers.  

Our main working assumption is that the loss function is convex in its second argument. That is, $\Theta_0$ and $\Theta$ are convex sets and $\phi(x,\cdot)$ is convex on $\Theta_0$ for $P$-almost all $x\in E$. Relevant examples include:
\begin{enumerate}
	\item Location estimation: $E=\Theta_0=\R^d$, $\phi(x,\theta)=\ell(x-\theta)$ for some convex function $\ell:\R^d\to\R$. For instance, if $\ell$ is the squared Euclidean norm, we recover mean estimation. If $\ell$ is the Euclidean norm, we recover geometric median estimation. If $\ell(x)=\|x\|-(1-2\alpha)u^\top x$, where $\alpha\in (0,1)$ and $u\in\R^d$ with $\|u\|=1$ are fixed ($\|\cdot\|$ being the Euclidean norm), we recover geometric quantile estimation (e.g., if $d=1$ and $u=1$, $\Theta^*$ is simply the set of $\alpha$-quantiles of $P$). Huber's $M$-estimators, adding robustness to mean estimators, correspond to the loss $\ell(x)=h_c(\|x\|), x\in\R^d$, where for all $t\geq 0$, $h_c(t)=t^2$ if $t\leq c$, $h_c(t)=2ct-c^2$ if $t>c$ and $c>0$ is a given, tuning parameter.

	\item Location estimation on matrix spaces: Let $E=\Theta_0=:\mathcal S_d^+$ be the space of $d\times d$ symmetric, positive semi-definite matrices. There are several ways of averaging positive definite matrices, beyond simply taking their arithmetic mean (i.e., their standard linear average). A simple example is that of the harmonic mean, which is simply the inverse of the linear average of the inverses (if the matrices are positive definite). More involved ways include (again for positive definite matrices) the Karsher mean, which, in the case of $2$ such matrices, reduces to their geometric mean \cite{bhatia2006riemannian}. In the context of optimal transport, a large body of literature has been interested in the Bures-Wasserstein mean of positive definite matrices, which is related to Wasserstein barycenters on the set of Gaussian distributions \cite{takatsu2011wasserstein,altschuler2021averaging}. In fact, it is shown in \cite[Lemma A.5]{kroshnin2021statistical} that the Bures-Wasserstein mean is the solution to a convex optimization problem. Hence, as it is done in \cite{kroshnin2021statistical}, the Bures-Wasserstein barycenter of iid, random, positive (semi-)definite matrices can be analyzed under the prism of $M$-estimation with convex loss, and our results also allows to consider the constrained case, as well as robust alternatives to Bures-Wasserstein barycenters (such as the Bures-Wasserstein median, see \cite{altschuler2021averaging}). 
		
	\item Linear regression (here, data are rather denoted as pairs $(X_n,Y_n)\in\R^d\times\R, n\geq 1$): $E=\R^d\times\R$, $\Theta=\R^d$, $\phi((x,y),\theta)=\ell(y-\theta^\top x)$ for some $\ell:\R\to\R$ (which, again in our context, we assume to be convex). If $\ell(t)=t^2$, we recover least squares estimation. If $\ell(t)=|t|$, this is median regression, etc.		
\end{enumerate}

In all these examples, we can take $\Theta_0=\Theta=\R^d$ (or $\mathcal S_d^+$), corresponding to unconstrained estimation, but we could also assume that $\Theta$ is a closed, strict subset of $\Theta_0$. Perhaps the simplest example is the case when $E=\Theta_0=\R^d$, $\Theta\subseteq\R^d$ is a compact convex subset and $\phi(x,\theta)=\|x-\theta\|^2$. In that case, it is easy to check that $\theta^*=\pi_{\Theta}(\E[X])$ and $\hat\theta_n=\pi_\Theta(\bar X_n)$  are the unique minimizers of $\Phi$ and $\Phi_n$ respectively, where $\bar X_n=n^{-1}\sum_{i=1}^n X_i$ and $\pi_\Theta$ is the metric projection on $\Theta$. Of course, this example can be studied with elementary tools, but it is worth keeping it in mind as an illustration of our results, in order to fix ideas.

Typically, proving consistency and finding the asymptotic distribution of $M$-estimators require some tools from the theory of empirical processes and imposes some smoothness of the loss function $\phi$ in its second argument. Moreover, it is often assumed that the partial derivatives of $\phi$, with respect to its second argument, are locally dominated, allowing the use of dominated convergence to swap derivatives and expectations in the analysis. In our context, the full power of convexity comes in through fairly elementary convex analysis and allows to completely avoid such common technical assumptions. 


\subsection{Related works}

$M$-estimation is a quintessential problem in statistical inference (maximum likelihood estimation being a particular instance in general) and, as a particular case, constrained $M$-estimation. 

Asymptotic theory of statistical estimation has been overlooked in the era of high-dimensional data and models. Yet, it provides benchmarks for non-asymptotic theory and asymptotic approximations produce less conservative inference than non-asymptotic approaches, and they are relevant when the data set contains a lot of samples and their dimension is not too large. 

Asymptotic theory of $M$-estimators is well understood when the loss function is smooth and satisfies local domination properties \cite{lecam1970assumptions,van1996weak,van2000asymptotic}.
Under similar smoothness and domination assumptions, \cite{geyer1994asymptotics} also derived asymptotic properties in the constrained case, when the set of constraints is a regular closed set and the population minimizer is a local minimum of the population risk in the ambient space. See also \cite{li2024inference} for inference on constrained statistical problems and \cite{pollard1991asymptotics,knight2006asymptotic} for special cases. Recently, \cite{li2015geometric} drew connections between the statistical error of constrained $M$-estimation and the statistical dimension of the constrained set, building on \cite{chatterjee2014new,plan2017high} in linear regression and Gaussian sequence models. Even though these connections belong to the non-asymptotic world, we also discuss such connections at infinitesimal scales in the remarks following Theorem~\ref{thm:asympt_norm} below.

When the loss function is convex, \cite{haberman1989concavity} proved asymptotic normality, only requiring the population risk (that is, $\Phi$) being twice differentiable at the (unique) population minimizer, with positive definite Hessian at that point - convexity allowing to avoid any local domination assumption. \cite{niemiro1992asymptotics} proved further asymptotic expansions of the statistical error under stronger smoothness assumptions of convex the loss. 

Asymptotics of penalized $M$-estimators have also been established \cite{honorio2014unified}, in particular for penalized regression (such as Lasso) \cite{knight2000asymptotics}.

In the context of high dimensional linear regression and classification, some recent work has also tackled the asymptotics of penalized $M$-estimators and bagged penalized $M$ estimators in growing dimension (that is, when the dimension $d$ also diverges with the sample size) \cite{bellec2022asymptotic,koriyama2024precise,bellec2024asymptotics}. Related to this line of work are the high-dimensional central limit theorems of \cite{chernozhukov2017central,fang2021high} which correspond to the squared Euclidean loss in the context of $M$-estimation. To the best of our knowledge, similar high-dimensional central limit theorems have not been tackled for general $M$-estimators. 

This work is not concerned with penalized $M$-estimation. Indeed, even though penalized and constrained optimization problems are related through Lagrangian functions, in penalized statistical problems, it is standard to let the penalty depend on the sample size in order to enforce some regularization and achieve optimal performance, although here, we only consider fixed constraint sets, independently of the sample size. 

\subsection{Outline}

In Section~\ref{Sec:lemmas}, we give some key lemmas that we use in our main results. Section~\ref{sec:lemmas1} gathers some results about convex functions and sequences of convex functions, which we chose to highlight in the first part of this work because they are essential to build the intuition behind the theory. In Section~\ref{Sec:existence}, which is much more theoretical and could be skipped at first, we deal with the existence of a measurable empirical minimizer, based on results that guarantee the existence of measurable selections. Section~\ref{Sec:consistency} focuses on consistency of convex $M$-estimators and Section~\ref{Sec:asymptnorm} deals with asymptotic distributions of $M$-estimators. We propose an extension to $U$-estimators with convex loss in Section~\ref{sec:U-estimators}. More lemmas about convex functions, convex sets and cones, and metric projections, which are only used for some technical parts of the main proofs, but not essential to build the intuition, are deferred to the appendix. However, Section~\ref{sec:appendix_diff_proj}, in the appendix, on directional differentiability of metric projections onto convex sets, may be of independent interest to the reader.

\subsection{Notation and standard definitions/assumptions}

Here, we gather all the notation that we use in this work, as well as several simple definitions.

\begin{enumerate}

	\item In this work, $(\Omega,\mathcal F,\PP)$ is a fixed probability space and we assume that all the random variables that we consider are defined on that space. We let $X_1,X_2,\ldots$ be iid random variables with values in a measurable space $E$ and we let $P=X_1\#\PP$ be their distribution. The set $\Theta_0$ is a fixed, open, convex subset of $\R^d$ and $\Theta$ is a closed, convex subset of $\Theta_0$. The loss function $\phi:E\times\Theta_0\to\R$ is assumed to be measurable in its first argument and convex in its second, and to satisfy $\phi(\cdot,\theta)\in L^1(P)$ for all $\theta\in\Theta_0$. We let $\Phi(\theta)=\E[\phi(X_1,\theta)]$ for all $\theta\in\Theta_0$ (referred to as \textit{population risk}) and for all $n\geq 1$, $\omega\in\Omega$ and $\theta\in\Theta_0$, $\Phi_n(\omega,\theta)=n^{-1}\sum_{i=1}^n\phi(X_i(\omega),\theta)$ (referred to as \textit{empirical risk}). For simplicity, unless this amount of precision is needed, we simply write $\Phi_n(\theta)$ and skip the dependence on $\omega\in\Omega$. 
	
	\item The power set of a non-empty set $A$ is denoted by $\mathcal P(A)$. 

	\item Given a subset $G\subseteq\R^d$, we denote by $\mathrm{int}(G)$ its interior, $\cl(G)$ its closure and $\partial G=\cl(G)\setminus\inter(G)$ its boundary.

	\item Any symmetric, positive definite matrix $S\in\R^{d\times d}$ yields a scalar product by setting, for $x,y\in\R^d$, $\langle x,y\rangle_S:=x^\top S y$. The associated Euclidean norm is given by $\|x\|_S=\langle x,x\rangle_S^{1/2}$ for all $x\in\R^d$. The corresponding Euclidean ball with center $x\in \R^d$ and radius $r\geq 0$ is denoted by $B_S(x,r)$. 

	\item Given a vector $u\in\R^d$, the linear subspace of $\R^d$ that is orthogonal to $u$ with respect to $\langle\cdot,\cdot\rangle_S$ is denoted by $u^{\perp_S}$: If $u=0$, $u^{\perp_S}=\R^d$ and if $u\neq 0$, $u^{\perp_S}$ is some linear hyperplane. When $L\subseteq \R^d$, we denote by $L^{\perp_S}$ the linear subspace of $\R^d$ that is orthogonal to $L$ with respect to $\langle\cdot,\cdot\rangle_S$. 

	\item For a set $C\subseteq \R^d$, a vector $u\in\R^d$ and a real number $t\in\R$, we denote by $C_{u,t}^S=\{x\in C:\langle u,x\rangle_S=t\}$, which may be empty. When $t=0$, we simply write $C_u^S=C_{u,t}^S$. 

	\item The distance of a point $x\in\R^d$ to a closed set $C\subseteq \R^d$ with respect to the Euclidean norm associated with $S$ is denoted by $\dist_S(x,C)=\min_{y\in C}\|x-y\|_S$.  

	\item The metric projection onto a non-empty, closed convex set $C\subseteq \R^d$ with respect to $\langle\cdot,\cdot\rangle_S$ is denoted by $\pi_C^S$: For all $u\in\R^d$, $\pi_C^S(u)$ is the unique minimizer of the map $t\in C\mapsto \|t-u\|_S^2$. In particular, $\dist_S(u,C)=\|u-\pi_C^S(u)\|_S$.

	\item Let $G\subseteq\R^d$ be a non-empty, closed, convex set and $x_0\in G$. The tangent cone to $G$ at $x_0$ is the set of all $t\in\R^d$ such that $x_0+\varepsilon t\in G$ for all small enough $\varepsilon>0$. It is a convex cone, not necessarily closed. Its closure is called the support cone to $G$ at $x_0$. Let $S\in\R^{d\times d}$ be symmetric, positive definite. The normal cone to $G$ at $x_0$ with respect to $S$ is the set of all $t\in\R^d$ satisfying $\langle t,x-x_0\rangle_S\leq 0$ for all $x\in G$. It is a closed, convex cone. When there is no mention of a matrix $S$, it is implicitly assumed to be the identity matrix. 

	\item The support function of a non-empty convex set $C\subseteq\R^d$ is the map $h_C:\R^d\to\R\cup\{\infty\}$ defined by $h_C(t)=\sup_{u\in C} u^\top t$. If $t\neq 0$, it is the largest (signed) distance from the origin to a hyperplane orthogonal to $t$ and that is tangent to $C$. It is easy to check that $h_C$ is a sublinear function (that is, positively homogeneous and convex). If $C$ is bounded, then $h_C$ only takes finite values. See, e.g., \cite[Section 1.7.1]{schneider2014convex}.

	\item In all notation above, when $S$ is the identity matrix, we drop the subscript or superscipt $S$ and simply write, for instance, $\|x\|$, $B(x,r)$, $u^\perp$, $C_u$, $\pi_C$, etc.

	\item Given a set $C\subseteq \R^d$ and a function $f:C\to\R$, the set of minimizers (resp. maximizers) of $f$ on $C$ is denoted by $\Argmin_{y\in C} f(y)$ (resp. $\Argmax_{y\in C} f(y)$). This set may be empty. When this set is a singleton, we denote by $\argmin_{y\in C} f(y)$ (resp. $\argmax_{y\in C} f(y)$), with lower case ``a", the unique element of that set.

	\item Let $f$ be a function defined on a subset of $\R^d$, with values in $\R^p$ for some $p\geq 1$ (for us, in practice, $p=1$ or $d$). Then, given a point $x$ in the interior of the domain of $f$, we say that $f$ has a directional derivative at $x$ in the direction $t\in\R^d$ if and only if the quantity $\varepsilon^{-1}(f(x+\varepsilon t)-f(x))$ has a limit as $\varepsilon\to 0$, with $\varepsilon>0$. In that case, we denote this limit by $\diff^+f(x;t)$. Note that if $f$ has directional derivatives at $x\in\R^d$, then it must be continuous at $x$. Moreover, the map $\diff^+f(x;\cdot)$ is automatically measurable, since the limit can be taken along the sequence $\varepsilon=1/k, k\geq 1$. 
If the ratio $\varepsilon^{-1}(f(x+\varepsilon t)-f(x))$ converges uniformly in $t$ on all compact subsets of $\R^d$, we say that $f$ has directional derivatives at $x$ in Hadamard sense. This is equivalent to requiring that for all $t\in\R^d$, for all sequences $(t_n)_{n\geq 1}$ converging to $t$ and for all seuqences $(\varepsilon_n)_{n\geq 1}$ of positive numbers converging to $0$, $\varepsilon_n^{-1}(f(x+\varepsilon_n t_n)-f(x))$ has a (finite) limit as $n\to\infty$ (see, e.g., \cite[Chapter III]{fernholz1983mises}).
	
	\item If $f$ is differentiable at $x$, we denote by $\diff f(x;\cdot)$ its differential. That is, $\diff f(x;t)=\diff^+ f(x,t)=\nabla f(x)^\top t$ for all $t\in\R^d$.

	\item Given a convex set $G_0\subseteq\R^d$, when we talk about a convex function on $G_0$, we always mean that it takes finite values only, i.e., we only consider convex functions $f:G_0\to\R$, which may be the restriction to $G$ of some lower-semicontinuous convex function $\tilde f:\R^d\to\R\cup\{\infty\}$ whose domain contains $G_0$.

	\item We call \textit{random convex function} any map $f:\Omega\times G\to\R$, where $G\subseteq \R^d$ is some convex set, such that $f(\cdot,t)$ is measurable for all $t\in G$ and $f(\omega,\cdot)$ is convex for all $\omega\in \Omega$. We could only assume that $f(\omega,\cdot)$ is convex for $\PP$-almost all $\omega\in\Omega$, but this does not bring significantly more generality. Unless we need to emphasize the dependence on $\omega$ explicitly, we rather write $f(t)$ instead of $f(\omega,t)$ for simplicity.

	\item The covariance matrix of a random vector $X$ in $\R^d$ with two moments is defined as $\var(X)=\E[XX^\top]-\E[X]\E[X]^\top=\E[(X-\E[X])(X-\E[X])^\top]$. That is, for all vectors $u,v\in\R^d$, $u^\top \var(X)v=\cov(u^\top X,v^\top X)$. When $S\in\R^{d\times d}$ is symmetric, positive definite, we denote by $\var_S(X)=S\var(X)S=\var(SX)$ so that for all vectors $u,v\in\R^d$, we have the identity $u^\top\var_S(X)v=\cov(\langle u,X\rangle_S,\langle v,X\rangle_S)$. This is the matrix representation of the covariance operator of $X$ corresponding to the Euclidean structure defined by $S$. 

	\item For all vectors $u\in\R^d$ and symmetric, positive semi-definite matrices $V\in\R^{d\times d}$, we denote by $\mathcal N_d(u,V)$ the $d$-variate Gaussian distribution with mean $u$ and covariance matrix $V$.
	
\end{enumerate}

\section{Key lemmas about deterministic and random convex functions} \label{Sec:lemmas}

\subsection{On the behavior of convex functions and sequences of convex functions} \label{sec:lemmas1}

First, we state a minimum principle for convex functions, which we will use a few times in the next sections.

\begin{lemma} \label{lem:convexmin}
	Let $G_0\subset\R^d$ be an open convex set and $G\subseteq G_0$ be a closed convex subset. Let $f:G_0\to\R$ be a convex function and $K\subseteq G_0$ be any compact, convex set. If $\min_{t\in \partial K\cap G}f(t)>f(t_0)$ for some $t_0\in K\cap G$, then $\Argmin\limits_{t\in G} f(t)\subseteq K$ and it is not empty. 
\end{lemma}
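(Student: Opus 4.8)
The plan is to reduce everything to a one-dimensional convexity estimate along line segments issued from $t_0$. First I would record the easy structural facts. Since $G$ is closed and $K$ is compact, $G\cap K$ is compact, and it is nonempty because $t_0\in K\cap G$; since $f$ is a finite convex function on the open convex set $G_0$, it is continuous there (a standard fact), so $m:=\min_{t\in G\cap K}f(t)$ exists and is attained. I would also observe that the hypothesis forces $t_0\in\inter(K)$: if instead $t_0\in\partial K$, then $t_0\in\partial K\cap G$, so $\min_{t\in\partial K\cap G}f(t)\le f(t_0)$, contradicting the strict inequality; since $K$ is compact convex, $\partial K=K\setminus\inter(K)$, hence $t_0\in\inter(K)$.

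The crux is the claim that $f(t)>f(t_0)$ for every $t\in G\setminus K$. Given such a $t$, the segment $[t_0,t]$ lies in $G$ by convexity of $G$. Set $\lambda^*=\sup\{\lambda\in[0,1]:(1-\lambda)t_0+\lambda t\in K\}$ and $s=(1-\lambda^*)t_0+\lambda^* t$. Since $K$ is closed, $s\in K$; since $t\notin K$, $\lambda^*<1$; since $t_0\in\inter(K)$ and $t\neq t_0$, $\lambda^*>0$; and since the points $(1-\lambda)t_0+\lambda t$ with $\lambda$ slightly above $\lambda^*$ lie outside $K$, the point $s$ is not in $\inter(K)$, so $s\in\partial K$. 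As $s\in G$ as well, we get $s\in\partial K\cap G$ (in particular this set is nonempty, so the hypothesis is not vacuous in this case). Convexity of $f$ gives $f(s)\le(1-\lambda^*)f(t_0)+\lambda^* f(t)$, while the hypothesis gives $f(s)\ge\min_{u\in\partial K\cap G}f(u)>f(t_0)$; combining and dividing by $\lambda^*>0$ yields $f(t)>f(t_0)$.

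Finally I would assemble the conclusion. By the claim, every $t\in G$ satisfies $f(t)\ge m$: this holds for $t\in G\cap K$ by definition of $m$, and for $t\in G\setminus K$ we have $f(t)>f(t_0)\ge m$. Hence $\inf_{t\in G}f(t)=m$, attained at any minimizer of $f$ over $G\cap K$, so $\Argmin_{t\in G}f(t)\neq\emptyset$. Moreover, any $t\in\Argmin_{t\in G}f(t)$ satisfies $f(t)=m\le f(t_0)$, which by the claim excludes $t\in G\setminus K$; therefore $\Argmin_{t\in G}f(t)\subseteq K$ (indeed $\subseteq G\cap K$).

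The only genuinely delicate point is the boundary-crossing step: ensuring that the segment from $t_0$ to $t$ exits $K$ exactly at a point of $\partial K$ that still belongs to $G$, and that the crossing parameter $\lambda^*$ lies strictly between $0$ and $1$ — the lower bound being precisely where the strictness of the hypothesis (through $t_0\in\inter(K)$) is used. The rest is routine bookkeeping: compactness of $G\cap K$, continuity of $f$, and a single use of the convexity inequality.
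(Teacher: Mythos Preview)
Your proof is correct and follows essentially the same route as the paper: both show $t_0\in\inter(K)$, then for any $t\in G\setminus K$ locate a boundary-crossing point $s\in\partial K\cap G$ on the segment $[t_0,t]$ and use one-dimensional convexity together with the hypothesis $f(s)>f(t_0)$ to deduce $f(t)>f(t_0)$. The only cosmetic difference is that the paper argues via monotonicity of the restricted function past $\lambda^*$, whereas you invoke the convexity inequality directly; your version is slightly more careful in justifying $0<\lambda^*<1$ and $s\in\partial K$.
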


\begin{remark}
\begin{itemize}
	\item Recall that a convex function defined on an open convex set is automatically continuous on that set \cite[Theorem 10.1]{rockafellar1970convex}, hence, it automatically reaches its bounds on any compact set. 
	\item The phrasing of this lemma is a bit technical, but a simpler version, when $G=G_0=\R^d$, says that if $f$ has one value inside $K$ that is smaller than all values taken on $\partial K$, then, it has at least one minimizer, and they all lie in $K$. We need this slightly more technical statement in order to deal with constrained $M$-estimation later.
\end{itemize}
\end{remark}

\begin{proof}
	Fix some arbitrary $t\in G\setminus K$ and let us show that necessarily, $f(t)>f(t_0)$. Set $\phi:\lambda\in [0,1]\mapsto f(t_0+\lambda(t-t_0))$, which is a convex function. First, note that $t_0\notin \partial K$ (or else, $t_0$ would be in $\partial K\cap G$ so $f(t_0)\geq \min_{\partial K\cap G}f$, which would contradict the assumption). Hence, there must be some $\lambda^*\in (0,1)$ such that $t_0+\lambda^*(t-t_0)\in\partial K$. Moreover, since both $t_0$ and $t$ are in $G$, $t_0+\lambda^*(t-t_0) \in G$.  Therefore, by assumption, $\phi(\lambda^*)>\phi(0)$. Hence, convexity of $\phi$ implies that it must be increasing on $[\lambda^*,1]$, yielding that $\phi(1)\geq \phi(\lambda^*)$ and hence, that $\phi(1)>\phi(0)$. That is, $f(t)>f(t_0)$.

Therefore, the minimizers (if any) of $f$ on $G$ must be contained in $K$. Finally, there must be at least one such minimizer since $f$ is continuous on the compact set $K\cap G$. 
\end{proof}


In the main statistical results presented in the next sections, Lemma~\ref{lem:convexmin} will be used to localize empirical minimizers of $\Phi_n$. 

The second key result is due to Rockafellar and shows that, for sequences of convex functions, uniform convergence can be deduced from pointwise convergence on a dense subset. From this lemma, we will derive two probabilistic corollaries. 

\begin{lemma}\cite[Theorem 10.8]{rockafellar1970convex} \label{lemma:Rockafellar}
	Let $G_0\subseteq \R^d$ be an open convex set and $f,f_1,f_2,\ldots$ be convex functions on $G_0$. Assume that there is a dense subset $C$ of $G_0$ such that for all $t\in C$, $f_n(t)\to f(t)$. Then, $f_n$ converges uniformly to $f$ on all compact subsets of $G_0$. 
\end{lemma}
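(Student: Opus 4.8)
The plan is to reduce the statement to two classical facts about convex functions: convexity yields cheap \emph{upper} bounds via Jensen's inequality, and local boundedness (from above \emph{and} below) of a convex function forces a quantitative local Lipschitz estimate. Accordingly I would proceed in three stages: (i) promote the pointwise convergence on $C$ to uniform-in-$n$ local boundedness of the family $\{f_n\}$; (ii) deduce a uniform-in-$n$ local Lipschitz bound, hence equicontinuity of $\{f_n\}$ on compact subsets of $G_0$; (iii) combine equicontinuity with pointwise convergence on the dense set $C$ by a standard $\varepsilon/3$ argument.

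For stage (i), fix $x_0\in G_0$. Since $G_0$ is open and $C$ is dense, choose $d+1$ points $v_0,\dots,v_d\in C$ whose convex hull $\Delta$ is a non-degenerate simplex with $x_0\in\inter(\Delta)\subseteq G_0$. For every $y\in\Delta$, convexity gives $f_n(y)\le\max_{0\le i\le d}f_n(v_i)$, and since $f_n(v_i)\to f(v_i)$ for each $i$, the right-hand side is bounded uniformly in $n$; this bounds $f_n$ from above on all of $\Delta$. For a matching lower bound near $x_0$, pick $p\in C\cap\inter(\Delta)$ sufficiently close to $x_0$ and $\eta>0$ sufficiently small that $z:=2p-y\in\Delta$ for every $y\in B(x_0,\eta)$; then $p=\tfrac12(y+z)$ gives $f_n(p)\le\tfrac12\big(f_n(y)+f_n(z)\big)$, i.e. $f_n(y)\ge 2f_n(p)-f_n(z)\ge 2f_n(p)-\max_i f_n(v_i)$, which is bounded below uniformly in $n$ because $f_n(p)\to f(p)$. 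Hence every point of $G_0$ has a neighborhood on which $\{f_n\}$ is uniformly bounded. For a compact $K\subseteq G_0$, fix $r>0$ so that the closed $2r$-neighborhood $\tilde K$ of $K$ lies in $G_0$, and cover the compact set $\tilde K$ by finitely many such neighborhoods: this produces a single constant $M^\ast$ with $|f_n|\le M^\ast$ on $\tilde K$ for all $n$.

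For stage (ii), I use the classical estimate that a convex function bounded in absolute value by $M^\ast$ on a ball $B(x,2r)\subseteq G_0$ is Lipschitz on $B(x,r)$ with a constant depending only on $M^\ast$ and $r$; applied around each $x\in K$ (for which $B(x,2r)\subseteq\tilde K$) it yields a common Lipschitz constant $L$ with $|f_n(x)-f_n(x')|\le L\|x-x'\|$ for all $n$ whenever $x\in K$ and $\|x-x'\|<r$. Since $f$ is convex on the open set $G_0$ it is continuous, hence uniformly continuous on $\tilde K$. Stage (iii) is then routine: given $\varepsilon>0$, pick $\delta\in(0,r)$ with $L\delta<\varepsilon/3$ and small enough that $\|x-x'\|<\delta$ forces $|f(x)-f(x')|<\varepsilon/3$ for $x,x'\in\tilde K$; cover $K$ by finitely many balls $B(c_1,\delta),\dots,B(c_m,\delta)$ with $c_1,\dots,c_m\in C$ (using density of $C$); pick $N$ with $|f_n(c_j)-f(c_j)|<\varepsilon/3$ for all $j\le m$ and $n\ge N$; then for $x\in K$, choosing $j$ with $\|x-c_j\|<\delta$, the bound $|f_n(x)-f(x)|\le|f_n(x)-f_n(c_j)|+|f_n(c_j)-f(c_j)|+|f(c_j)-f(x)|<\varepsilon$ holds for all $n\ge N$, uniformly over $x\in K$.

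I expect the only genuinely delicate point to be the uniform-in-$n$ \emph{lower} bound in stage (i): the upper bound is immediate from convexity, but bounding $f_n$ from below near $x_0$ forces one to anchor the estimate at a point of $C$ (where convergence, hence boundedness, is available) and to propagate it through a reflection/convex-combination argument while keeping all auxiliary points inside $G_0$ — precisely where density of $C$ and openness of $G_0$ are used. Once uniform boundedness is secured, the passage to a common Lipschitz constant and then to uniform convergence on compacts is the classical boundedness $\Rightarrow$ Lipschitz $\Rightarrow$ Arzel\`a--Ascoli-type chain.
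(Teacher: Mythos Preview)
Your proof is correct and follows the standard route to Rockafellar's Theorem 10.8: leverage convexity to obtain uniform-in-$n$ local boundedness (the simplex argument for the upper bound, the reflection-through-a-point-of-$C$ argument for the lower bound), upgrade boundedness to a common local Lipschitz constant, and finish with an $\varepsilon/3$ argument anchored at finitely many points of $C$. The only delicate step you correctly flag is the lower bound, and your treatment of it is sound: since $x_0\in\inter(\Delta)$ and $C$ is dense, one can indeed choose $p\in C$ close enough to $x_0$ and $\eta$ small enough that the reflected ball $2p-B(x_0,\eta)$ stays in $\Delta$.

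The paper, however, does not prove this lemma at all: it is simply quoted as \cite[Theorem 10.8]{rockafellar1970convex} and used as a black box. So there is nothing to compare your argument against in the paper itself; you have supplied a complete proof where the authors chose to cite the literature.
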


An important consequence that we will use extensively is the following corollary.

\begin{corollary} \label{cor:Rockafellar_as}
	Let $f,f_1,f_2,\ldots$ be random convex functions defined on an open convex set $G_0\subseteq \R^d$. Assume that $f_n(t)\xrightarrow[n\to\infty]{} f(t)$ almost surely (resp. in probability) for all $t\in G_0$. Then, for all compact sets $K\subseteq G_0$, $\sup_K|f_n-f|\xrightarrow[n\to\infty]{} 0$ almost surely (resp. in probability). 
\end{corollary}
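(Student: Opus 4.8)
The plan is to reduce the almost sure (resp. in-probability) statement to the deterministic Lemma~\ref{lemma:Rockafellar} by working along a fixed countable dense subset of $G_0$. First I would fix a countable dense subset $C=\{t_k:k\geq 1\}$ of $G_0$; such a set exists since $G_0\subseteq\R^d$ is separable (e.g. take the points of $\mathbb Q^d$ lying in $G_0$, which is dense in the open set $G_0$).

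For the almost sure case: for each $k\geq 1$, let $\Omega_k=\{\omega\in\Omega: f_n(\omega,t_k)\to f(\omega,t_k)\}$, which by hypothesis has $\PP(\Omega_k)=1$. Set $\Omega_\infty=\bigcap_{k\geq 1}\Omega_k$, a countable intersection of probability-one events, so $\PP(\Omega_\infty)=1$. For every $\omega\in\Omega_\infty$, the functions $f(\omega,\cdot),f_1(\omega,\cdot),f_2(\omega,\cdot),\ldots$ are convex on $G_0$ (by definition of a random convex function) and $f_n(\omega,t)\to f(\omega,t)$ for all $t\in C$; hence Lemma~\ref{lemma:Rockafellar} applies pointwise in $\omega$ and yields $\sup_K|f_n(\omega,\cdot)-f(\omega,\cdot)|\to 0$ for every compact $K\subseteq G_0$. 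Since this holds on the probability-one set $\Omega_\infty$, we get $\sup_K|f_n-f|\to 0$ almost surely. One routine point to mention is measurability of $\sup_K|f_n-f|$: because these functions are continuous on $G_0$ (convex functions on an open convex set are continuous), the supremum over $K$ equals the supremum over the countable set $K\cap C$, hence is a measurable random variable, so the statement ``$\sup_K|f_n-f|\to 0$ a.s.'' is meaningful.

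For the in-probability case: convergence in probability along a sequence is equivalent to the property that every subsequence has a further subsequence converging almost surely. So take any subsequence $(n_j)_j$. For each $k$, $f_{n_j}(t_k)\to f(t_k)$ in probability, so by a diagonal extraction we can find a single subsequence $(n_{j_\ell})_\ell$ along which $f_{n_{j_\ell}}(t_k)\to f(t_k)$ almost surely simultaneously for all $k\geq 1$ (extract for $k=1$, then a sub-subsequence for $k=2$, etc., then take the diagonal). Applying the almost sure case already proved to this subsequence gives $\sup_K|f_{n_{j_\ell}}-f|\to 0$ almost surely. Since every subsequence of $(\sup_K|f_n-f|)_n$ has a further subsequence tending to $0$ a.s., we conclude $\sup_K|f_n-f|\to 0$ in probability.

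I do not expect any serious obstacle here; the only points requiring a little care are (i) the measurability remark above, so that the almost-sure and in-probability assertions are well posed, and (ii) correctly performing the diagonal extraction in the in-probability case so that a \emph{single} subsequence works for \emph{all} $t_k$ at once. Everything else is an immediate invocation of Lemma~\ref{lemma:Rockafellar} applied $\omega$ by $\omega$.
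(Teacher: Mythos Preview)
Your proposal is correct and follows essentially the same approach as the paper: fix a countable dense subset of $G_0$, intersect the probability-one events in the almost sure case and apply Lemma~\ref{lemma:Rockafellar} $\omega$ by $\omega$, and in the in-probability case use the subsequence characterization together with a diagonal extraction along the countable dense set. The paper does not explicitly address the measurability of $\sup_K|f_n-f|$, so your remark on that point is a welcome addition.
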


\begin{proof}
Let us prove the statement for the almost sure convergence and the convergence in probability separately.
\paragraph{\underline{Almost sure convergence}.} \,

Let $C$ be a dense and countable subset of $G_0$. By assumption, for each $t\in C$, it holds with probability one that $f_n(t)\xrightarrow[n\to\infty]{} f(t)$. Since $C$ is countable, this implies that with probability $1$, $f_n(t)\xrightarrow[n\to\infty]{} f(t)$ for all $t\in C$ simultaneously. Hence, by Lemma~\ref{lemma:Rockafellar}, with probability $1$, $f_n$ converges uniformly to $f$ on all compact subsets of $G_0$.

\paragraph{\underline{Convergence in probability}.} \,

Again, let $C$ be a dense and countable subset of $G_0$ and fix a compact subset $K$ of $G_0$. Our goal is to show that $Z_n:=\sup_{t\in K}|f_n(t)-f(t)|\xrightarrow[n\to\infty]{}0$ in probability. It is necessary and sufficient to show that every subsequence of $(Z_n)_{n\geq 1}$ has a further subsequence that converges to $0$ almost surely \cite[Section 3.3, Lemma 2]{chow2003probability}. With no loss of generality (since we could just renumber the terms of the sequence), let us prove that $(Z_n)_{n\geq 1}$ has a subsequence that converges to $0$ almost surely. Denote by $t_1,t_2,\ldots$ the elements of $C$. 

By assumption, $f_n(t_1)\xrightarrow[n\to\infty]{}f(t_1)$ in probability, so it has a subsequence that converges almost surely. That is, there is an increasing map $\psi_1:\N^*\to\N^*$ such that $f_{\psi_1(n)}(t_1)\xrightarrow[n\to\infty]{} f(t_1)$ almost surely. 

Similarly, $(f_{\psi_1(n)}(t_2))_{n\geq 1}$ being a subsequence of $(f_n(t_2))_{n\geq 1}$, it converges almost surely to $f(t_2)$ and thus has a further subsequence $(f_{\psi_1(\psi_2(n))}(t_2))_{n\geq 1}$ that converges almost surely to $f(t_2)$. By induction, one can construct a sequence of increasing maps $\psi_p:\N^*\to\N^*, p\geq 1$, such that for all integers $p\geq 1$, $f_{\psi_1\circ\ldots\circ\psi_p(n)}(t_p)$ converges to $f(t_p)$ almost surely. Let $\psi(n)=\psi_1\circ\ldots\circ\psi_n(n)$, for all $n\geq 1$. This is an increasing map; Let us prove that $Z_{\psi(n)}\xrightarrow[n\to\infty]{} 0$ almost surely, which will prove the lemma. 

First, note that with probablity $1$, $f_{\psi_1\circ\ldots\circ\psi_p(n)}(t_p)$ converges to $f(t_p)$ simultaneously for all $p\geq 1$. Second, for all $p\geq 1$, $(f_{\psi(n)}(t_p))_{n\geq 1}$ is a subsequence of $(f_{\psi_1\circ\ldots\circ\psi_p(n)}(t_p))_{n\geq 1}$ (except maybe for the first $p$ terms of the sequence).
Hence, $f_{\psi(n)}(t_p)\xrightarrow[n\to\infty]{} f(t_p)$ for all $p\geq 1$, almost surely. The rest follows from the first part of the proof (the case of almost sure convergence).  
\end{proof}

In fact, we can also derive a similar corollary for $L^p$ convergence, for any $p\geq 1$. We defer it to the appendix (Section~\ref{sec:misc}), because we only use it to formulate an open question, see the end of Section~\ref{sec:diff_case}).

\subsection{On the existence of measurable minimizers and measurable subgradients} \label{Sec:existence}

The existence of minimizers of a random convex function can often be established quite easily (for instance, if the function is coercive). Same for subgradients since any convex function defined on an open convex set has at least one subgradient at any point of that set. However, the existence of a measurable minimizer or subgradient is much less trivial and relies on the theory of measurable selections. 

\subsubsection{Measurable selections}

\begin{definition}
	Let $\Gamma:\Omega\to \mathcal P(\R^d)$ be a multifunction, that is, a function that maps any $\omega\in\Omega$ to some non-empty set $\Gamma(\omega)\subseteq\R^d$. A measurable selection of $\Gamma$ is a measurable map $\gamma:\Omega\to\R^d$ such that for all $\omega\in\Omega$, $\gamma(\omega)\in\Gamma(\omega)$. 
\end{definition}

There are numerous theorems that guarantee the existence of measurable selections in various setups, see \cite{himmelberg1982measurable,molchanov2005theory}. The one that we will need is the following, that follows from combining Theorems 3.2 (ii), 3.5 and 5.1 of \cite{himmelberg1982measurable}. Denote by $\mathcal C$ the collection of all non-empty, closed subsets of $\R^d$. 

\begin{lemma} \label{lem:measurable_selections}
	Let $\Gamma:\Omega\to\mathcal C$ be a multifunction. Assume that for all compact sets $K\subseteq \R^d$, the set $\{\omega\in\Omega:\Gamma(\omega)\cap K\neq \emptyset\}$ is measurable (that is, it belongs to the $\sigma$-algebra $\mathcal F$). Then, $\Gamma$ has a measurable selection. 
\end{lemma}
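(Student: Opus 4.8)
The plan is to reduce Lemma~\ref{lem:measurable_selections} to the classical recursive construction underlying the Kuratowski--Ryll-Nardzewski measurable selection theorem, after first upgrading the hypothesis to the standard notion of weak measurability of a multifunction. The key preliminary observation is that $\R^d$ is a separable metric space that is moreover $\sigma$-compact and locally compact, so every open set $U\subseteq\R^d$ is a countable union of compact sets, e.g. $U=\bigcup_{n\geq 1}K_n$ with $K_n=\{x\in U:\|x\|\leq n,\ \dist(x,\R^d\setminus U)\geq 1/n\}$. Hence
\[
\{\omega\in\Omega:\Gamma(\omega)\cap U\neq\emptyset\}=\bigcup_{n\geq 1}\{\omega\in\Omega:\Gamma(\omega)\cap K_n\neq\emptyset\}\in\mathcal F
\]
by hypothesis and countable additivity of $\mathcal F$. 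Thus the assumed measurability of $\{\Gamma\cap K\neq\emptyset\}$ for compact $K$ forces the same measurability for open $U$, which is exactly what the selection machinery needs.

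Given this, I would construct the selection by induction. Fix a countable dense set $\{x_1,x_2,\ldots\}\subseteq\R^d$ and build measurable maps $\sigma_n:\Omega\to\{x_1,x_2,\ldots\}$ with $\dist(\sigma_n(\omega),\Gamma(\omega))<2^{-n}$ for all $\omega$, and $\|\sigma_n(\omega)-\sigma_{n-1}(\omega)\|<2^{-n+2}$ for $n\geq 2$. The base case is immediate: the sets $\{\omega:\Gamma(\omega)\cap B(x_k,1/2)\neq\emptyset\}$ cover $\Omega$ (since $\Gamma(\omega)\neq\emptyset$ and $\{x_k\}$ is dense) and are measurable, so after disjointifying one sets $\sigma_1=x_k$ on the $k$-th piece. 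For the inductive step, on the measurable set $A_j=\sigma_{n-1}^{-1}(\{x_j\})$ every $\omega$ with $\dist(x_j,\Gamma(\omega))<2^{-n+1}$ admits an index $k$ with $\Gamma(\omega)\cap B(x_k,2^{-n})\neq\emptyset$ and $\|x_k-x_j\|<2^{-n}+2^{-n+1}$ (pick $y\in\Gamma(\omega)$ near $x_j$, then $x_k$ near $y$); the sets
\[
A_j\cap\{\omega:\Gamma(\omega)\cap B(x_k,2^{-n})\neq\emptyset\}\cap\{\|x_k-x_j\|<2^{-n}+2^{-n+1}\}
\]
are measurable, cover $A_j$, and can be disjointified over the pair $(j,k)$, and one sets $\sigma_n$ equal to the corresponding $x_k$ on each. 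The sequence $(\sigma_n)$ is then uniformly Cauchy (its modulus $2^{-n+2}$ is summable), so it converges pointwise to a measurable $\gamma:\Omega\to\R^d$; and $\dist(\gamma(\omega),\Gamma(\omega))\leq\|\gamma(\omega)-\sigma_n(\omega)\|+\dist(\sigma_n(\omega),\Gamma(\omega))\to 0$, so $\gamma(\omega)\in\cl(\Gamma(\omega))=\Gamma(\omega)$ because $\Gamma(\omega)$ is closed. Alternatively, once weak measurability is in hand, one may simply cite Theorems~3.2(ii), 3.5 and 5.1 of \cite{himmelberg1982measurable}, which is the route taken in the statement.

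The main obstacle I expect is organizational rather than conceptual: making each $\sigma_n$ genuinely measurable requires careful disjointification of the covering sets at every stage and a verification that they truly cover the relevant domain $A_j$ (so $\sigma_n$ is defined everywhere), while simultaneously keeping the two controlling inequalities compatible so the limit both exists and lands in $\Gamma(\omega)$. The only genuinely new ingredient beyond the textbook selection theorem is the first step's remark that meeting every \emph{compact} set measurably already suffices, which uses nothing more than the $\sigma$-compactness of $\R^d$.
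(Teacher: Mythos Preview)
Your proposal is correct. The paper does not actually prove this lemma: it simply records that the statement follows by combining Theorems~3.2(ii), 3.5 and 5.1 of \cite{himmelberg1982measurable}, exactly the citation you mention at the end as an alternative. What you have done is unpack that citation. Your preliminary step---using $\sigma$-compactness of $\R^d$ to upgrade $C$-measurability to weak measurability---is precisely the content of Himmelberg's equivalence theorems in this setting, and your recursive construction is the Kuratowski--Ryll-Nardzewski argument that underlies Himmelberg's selection theorem. So the route is the same in spirit, but your write-up is self-contained whereas the paper delegates everything to the reference; this buys the reader a complete argument at the cost of a page of bookkeeping, which the paper chose to avoid since the lemma is a classical tool rather than a contribution.
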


A multifunction satisfying this property above is called \textit{$C$-measurable} ($C$ as in ``compact", the test sets $K$ used in Lemma~\ref{lem:measurable_selections} being compact). 

\subsubsection{Measurable empirical risk minimizers} \,

From Lemma~\ref{lem:measurable_selections}, we obtain the following result, which will guarantee the existence of a measurable empirical risk minimizer for large enough $n$, and which will, at the same time, yield its strong consistency. 

\begin{theorem} \label{thm:measurable_min}
	Let $f,f_1,f_2,\ldots$ be random convex functions defined on an open convex set $G_0\subseteq \R^d$ such that for all $t\in G_0$, $f_n(t)\xrightarrow[n\to\infty]{} f(t)$ almost surely. Let $G\subseteq G_0$ be a closed, convex set. Assume that $G^*:=\Argmin_{t\in G}f(t)$ is non-empty and compact. Then, there exists a sequence $(t_n)_{n\geq 1}$ of random variables with values in $G$ such that with probability $1$, $t_n$ is a minimizer of $f_n$ on $G$ for all large enough $n$. Moreover, $\dist(t_n,G^*)\xrightarrow[n\to\infty]{}0$ almost surely.
\end{theorem}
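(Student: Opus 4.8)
The plan is to combine three ingredients: (i) the uniform convergence on compacts from Corollary~\ref{cor:Rockafellar_as}, (ii) the localization principle of Lemma~\ref{lem:convexmin}, and (iii) the measurable selection theorem, Lemma~\ref{lem:measurable_selections}. First I would fix a compact convex set $K \subseteq G_0$ large enough that $G^* \subseteq \inter(K)$ and moreover $\min_{t \in \partial K \cap G} f(t) > \min_{t \in G} f(t) =: m$; such a $K$ exists because $G^*$ is compact, $f$ is continuous on $G_0$, and away from $G^*$ the value of $f$ on $G$ exceeds $m$ (one can take, e.g., a ball of large radius intersected with $G_0$, using coercivity of $f$ relative to $G$ implied by compactness of its argmin set — more carefully, pick $K = \cl(B(0,R)) \cap G_0'$ for suitable $R$ and a slightly shrunk $G_0' \Subset G_0$, and invoke Lemma~\ref{lem:convexmin} in reverse to see the strict inequality on $\partial K \cap G$ can be arranged). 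Fix $\delta = \frac{1}{3}\bigl(\min_{t \in \partial K \cap G} f(t) - m\bigr) > 0$.

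Next, by Corollary~\ref{cor:Rockafellar_as}, $\sup_{t \in K}|f_n(t) - f(t)| \to 0$ almost surely, so on an event of probability one there is a (random) $N$ such that for all $n \geq N$, $\sup_{t \in K}|f_n - f| < \delta$. On that event, for $n \geq N$: picking any $t^* \in G^* \subseteq \inter(K)$ we get $f_n(t^*) \le f(t^*) + \delta = m + \delta$, while for every $t \in \partial K \cap G$ we have $f_n(t) \ge f(t) - \delta \ge m + 3\delta - \delta = m + 2\delta > f_n(t^*)$. Thus the hypothesis of Lemma~\ref{lem:convexmin} holds for $f_n$ with this $K$ and $t_0 = t^*$, so $\Argmin_{t \in G} f_n(t)$ is non-empty and contained in $K$. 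This handles existence of a minimizer for large $n$, but not yet measurability nor the convergence $\dist(t_n, G^*) \to 0$.

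For measurability, I would define the multifunction $\Gamma_n(\omega) = \Argmin_{t \in G} f_n(\omega, t)$ when this set is non-empty (hence closed, being the intersection of $G$ with a sublevel set of the continuous convex function $f_n(\omega,\cdot)$), and $\Gamma_n(\omega) = \{x_0\}$ for some fixed reference point $x_0 \in G$ otherwise, so $\Gamma_n : \Omega \to \mathcal{C}$. To apply Lemma~\ref{lem:measurable_selections} I must check $C$-measurability: for compact $K' \subseteq \R^d$, the set $\{\omega : \Gamma_n(\omega) \cap K' \neq \emptyset\}$ is measurable. This reduces to showing $\{\omega : \inf_{t \in G} f_n(\omega,t) = \inf_{t \in G \cap K'} f_n(\omega,t)\}$ (the event that the infimum is attained in $K'$) together with attainment is measurable, which follows from writing both infima as countable infima over dense countable subsets of $G$ and of $G \cap K'$ — measurable because $f_n(\cdot, t)$ is measurable for each fixed $t$ and $f_n(\omega,\cdot)$ is continuous. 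Let $t_n$ be a measurable selection of $\Gamma_n$; on the probability-one event above, $t_n$ is a genuine minimizer of $f_n$ on $G$ for all $n \geq N$.

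For the consistency $\dist(t_n, G^*) \to 0$ a.s., I would argue by a standard contradiction/subsequence argument on the probability-one event where $f_n \to f$ uniformly on every compact subset of $G_0$. Suppose $\dist(t_{n_k}, G^*) \geq \eta > 0$ along some subsequence; since eventually $t_{n_k} \in K$ (compact), pass to a further subsequence with $t_{n_k} \to \bar t \in K \cap G$ and $\dist(\bar t, G^*) \geq \eta$, so $f(\bar t) > m$. But $f_{n_k}(t_{n_k}) \le f_{n_k}(t^*) \to f(t^*) = m$ (using uniform convergence at $t^*$), while uniform convergence on $K$ plus continuity of $f$ gives $f_{n_k}(t_{n_k}) \to f(\bar t) > m$, a contradiction.

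\textbf{Main obstacle.} The routine-looking but genuinely fiddly step is the $C$-measurability verification for Lemma~\ref{lem:measurable_selections}: one must carefully express ``the minimum over $G$ is attained inside the compact test set $K'$'' as a measurable event using only the pointwise measurability of $f_n(\cdot,t)$ and fiberwise continuity, handling the boundary case where no minimizer exists (which is why the reference point $x_0$ is inserted into the definition of $\Gamma_n$). The localization and consistency parts are straightforward once the compact $K$ is correctly chosen; choosing $K$ so that the strict gap on $\partial K \cap G$ holds is the one place where compactness of $G^*$ is essential.
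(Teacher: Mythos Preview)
Your proposal is correct and follows essentially the same approach as the paper: uniform convergence on compacts (Corollary~\ref{cor:Rockafellar_as}) combined with the localization Lemma~\ref{lem:convexmin}, then the measurable selection Lemma~\ref{lem:measurable_selections} applied to $\Gamma_n = \Argmin_G f_n$ with a fallback singleton when empty. The one noteworthy difference is the choice of localizing compact: the paper takes $K = G_\varepsilon^* := \{t:\dist(t,G^*)\le\varepsilon\}$ for arbitrary small $\varepsilon$, which makes the consistency statement $\dist(t_n,G^*)\to 0$ immediate from the localization step itself (just let $\varepsilon\downarrow 0$), whereas you fix one large $K$ and then run a separate subsequence/compactness argument for consistency---both are valid, but the paper's choice is slightly more economical and sidesteps your somewhat informal construction of $K$.
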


\begin{proof}
For $n\geq 1$, let $M_n:=\Argmin_{t\in G} f_n(t)$, possibly empty. We proceed in two steps. First, we prove that with probability $1$, $M_n$ is non-empty for all large enough $n$. Second, we use the measurable selection to obtain such a sequence $(t_n)_{n\geq 1}$. 

\paragraph{\underline{Step 1}.} Note that if $G$ is compact, then $M_n\neq\emptyset$ for all $n\geq 1$, since $f_n$ is convex, hence continuous, on the open set $G_0$. 

First, Corollary~\ref{cor:Rockafellar_as} yields that $f_n$ converges uniformly to $f$ on any compact subset of $G_0$, almost surely. Fix some arbitrary, small enough $\varepsilon>0$ such that $G_\varepsilon^*:=\{t\in \R^d:\dist(t,G^*)\leq \varepsilon\}$. This set is compact, so
\begin{equation} \label{proof:2.1}
	\sup_{t\in G_\varepsilon^*\cap G} |f_n(t)-f(t)|\xrightarrow[n\to\infty]{}0.
\end{equation} 

Let $f^*:=\min_{t\in G}f(t)$ be the smallest value of $f$ on $G$ (note that $f^*$ is measurable, since it can be written as the infimum of $f(t)$ for $t$ ranging in a countable, dense subset of $G$). Convexity of $f$ on the open set $G_0$ implies its continuity. Therefore, $\eta:=\min_{t\in\partial G_\varepsilon^*\cap G} f(t)-f^*>0$. 

Then, the following holds with probability $1$: For all sufficiently large integers $n$ and for all $t\in\partial G_\varepsilon^* \cap G$, 
\begin{align*}
	f_n(t) & \geq f(t)-\eta/3 \quad \mbox{ by } \eqref{proof:2.1} \\
	& \geq f^*+\eta-\eta/3 \quad \mbox{ by definition of } \eta \\
	& \geq f_n(t^*)-\eta/3+\eta-\eta/3 \quad \mbox{ again by } \eqref{proof:2.1}  \\
	& = f_n(t^*)+\eta/3 > f_n(t^*).
\end{align*}
Therefore, by Lemma~\ref{lem:convexmin}, it holds with probability $1$ that, for all large enough integers $n\geq 1$, \begin{equation} \label{eqn:consist}
	\emptyset \neq M_n\subseteq G_{\varepsilon}^*.  
\end{equation}

\paragraph{\underline{Step 2}.} Now, fix an arbitrary element $t_0\in G$. For all integers $n\geq 1$, let $\Gamma_n:=\begin{cases} M_n \mbox{ if } M_n\neq\emptyset \\ \{t_0\} \mbox{ otherwise}. \end{cases}$
Let us prove that $\Gamma_n$ has a measurable selection, for all $n\neq 1$. Since $M_n$ is always closed (by continuity of $f_n$), $\Gamma_n$ is always non-empty and closed, so by Lemma~\ref{lem:measurable_selections}, it is sufficient to check that for each $n\geq 1$, the multiset function $\Gamma_n:\Omega\to\mathcal C$ is $C$-measurable in order to guarantee the existence of a measurable selection.

Fix $n\geq 1$ and let $K\subseteq \R^d$ be any compact set and let us show that the set $\{\omega\in\Omega:\Gamma_n(\omega)\cap K\neq\emptyset\}$ is a measurable set. 

First, rewrite $\{\omega\in\Omega:\Gamma_n(\omega)\cap K\neq\emptyset\}= \{\omega\in\Omega:M_n(\omega)\cap K\neq\emptyset\}\cup \{\omega\in\Omega: M_n(\omega)=\emptyset, t_0\in K\}$. Since $f_n(\omega,\cdot)$\footnote{recall that above, we only wrote $f_n(t)$ instead of $f_n(\omega,t)$ for simplicity.} is continuous for every $\omega\in\Omega$, the first set in this union can be rewritten as $\{\omega\in\Omega:\inf_{t\in G}f_n(\omega,t)=\inf_{t\in K\cap G}f_n(\omega,t)\}$. Again, using continuity of $f_n(\omega,\cdot)$ for all $\omega\in\Omega$, we can rewrite $\inf_{t\in G}f_n(\omega,t)$ and $\inf_{t\in K\cap G}f_n(\omega,t)$ as $\inf_{t\in \tilde G_1}f_n(\omega,t)$ and $\inf_{t\in \tilde G_2}f_n(\omega,t)$ respectively, where $G_1$ and $G_2$ are dense, countable subsets of $G$ and $K\cap G$ respectively. Therefore, both $\inf_{t\in G}f_n(\omega,t)$ and $\inf_{t\in K\cap G}f_n(\omega,t)$ are measurable (as maps from $\Omega$ to $\R\cup\{-\infty\}$) and we obtain that $\{\omega\in\Omega:M_n(\omega)\cap K\neq\emptyset\}\in\mathcal F$.

Now, $\{\omega\in\Omega: M_n(\omega)=\emptyset, t_0\in K\}$ is empty if $t_0\notin K$, which is measurable. If $t_0\in K$, it reduces to the set $\{\omega\in\Omega: M_n(\omega)=\emptyset\}$, which can be decomposed as 
$$\{\omega\in\Omega: M_n(\omega)=\emptyset\}=\bigcap_{p\in\N^*}\bigcup_{q\geq p+1}\{\omega\in \Omega:\min_{t\in G\cap B(t_0,q)}f_n(\omega,t)<\min_{t\in G\cap B(t_0,q)}f_n(\omega,t)\}$$
which, therefore, is also measurable. 

Finally, Lemma~\ref{lem:measurable_selections} implies the existence of a sequence $(t_n)_{n\geq 1}$ of random variables such that for all $n\geq 1$, $t_n\in \Gamma_n$. Furthermore, by Step 1 of this proof, we also obtain that with probability $1$, $t_n\in M_n$ for all large enough $n$.

\paragraph{\underline{Step 3}.}

Finally, following the reasoning of Step 1, \eqref{eqn:consist} yields that for all $\varepsilon>0$, it holds, with probability $1$, that $\dist(t_n,G^*)\leq \varepsilon$ for all large enough $n$. That is, $\dist(t_n,G^*)\xrightarrow[n\to\infty]{}0$ almost surely.

\end{proof}


\subsubsection{Measurable subgradients} \,

Now, we apply Lemma~\ref{lem:measurable_selections} to show the existence of measurable subgradients for random convex functions. Recall that for a convex function $f$ defined on a convex set $G_0\subseteq\R^d$, a subgradient of $f$ at a point $t_0\in G_0$ is any vector $u\in\R^d$ such that 
$$f(t)\geq f(t_0)+u^\top(t-t_0), \quad \forall t\in G_0.$$
We denote by $\partial f(t_0)$ the collection of all subgradients of $f$ at $t_0$. If $t_0\in\mathrm{int}(G_0)$, then $\partial f(t_0)$ is non-empty, compact and convex by Lemma~\ref{lem:prop_subgrad}. In particular, if $G_0$ is open, then $f$ has subgradients at every point of $G_0$. Now, if $f$ is a random convex function, the existence of a measurable subgradient (i.e., that is chosen in a measurable way) at $t_0\in\mathrm{int}(G_0)$ is granted by the following theorem.

\begin{theorem}
	Let $f$ be a random convex function defined on a convex set $G_0\subseteq\R^d$ and let $t_0\in\mathrm{int}(G_0)$. Then, $f$ has a measurable subgradient at $t_0$.
\end{theorem}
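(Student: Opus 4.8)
The plan is to apply the measurable selection result, Lemma~\ref{lem:measurable_selections}, to the multifunction $\Gamma:\Omega\to\mathcal P(\R^d)$ defined by $\Gamma(\omega)=\partial f(\omega,\cdot)(t_0)$, the subdifferential of the convex function $f(\omega,\cdot)$ at the fixed point $t_0$. Since $t_0\in\mathrm{int}(G_0)$, Lemma~\ref{lem:prop_subgrad} guarantees that $\Gamma(\omega)$ is a non-empty, compact, convex subset of $\R^d$ for every $\omega\in\Omega$, so $\Gamma$ does map into $\mathcal C$. The only thing left to verify is that $\Gamma$ is $C$-measurable, i.e.\ that for every compact $K\subseteq\R^d$ the set $\{\omega\in\Omega:\Gamma(\omega)\cap K\neq\emptyset\}$ belongs to $\mathcal F$; then Lemma~\ref{lem:measurable_selections} produces the desired measurable selection, which is by construction a measurable subgradient of $f$ at $t_0$.

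The key step is therefore to turn membership of a point in $\partial f(\omega,\cdot)(t_0)$ into a countable collection of measurable conditions on $\omega$. First I would fix $\delta>0$ small enough that the closed ball $\bar B(t_0,\delta)\subseteq G_0$, and recall (again via Lemma~\ref{lem:prop_subgrad}) that $\partial f(\omega,\cdot)(t_0)$ depends only on the values of $f(\omega,\cdot)$ on this ball, so the subgradient inequality $f(\omega,t)\geq f(\omega,t_0)+u^\top(t-t_0)$ need only be checked for $t\in\bar B(t_0,\delta)$. By continuity of $t\mapsto f(\omega,t)$ on the open set $G_0$ (every convex function on an open convex set is continuous), this inequality holds for all $t$ in the ball if and only if it holds for all $t$ in a fixed countable dense subset $D$ of $\bar B(t_0,\delta)$. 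Hence
\[
\Gamma(\omega)=\bigcap_{t\in D}\bigl\{u\in\R^d:\ u^\top(t-t_0)\leq f(\omega,t)-f(\omega,t_0)\bigr\},
\]
a countable intersection of closed half-spaces whose defining coefficients $t-t_0$ are deterministic and whose right-hand sides $f(\omega,t)-f(\omega,t_0)$ are, for each fixed $t\in D$, measurable functions of $\omega$ (as differences of measurable maps, since $f(\cdot,t)$ is measurable for each $t$ by the definition of a random convex function).

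Finally, I would combine this with the compact test set $K$. Writing $H_t(\omega)=\{u:u^\top(t-t_0)\leq f(\omega,t)-f(\omega,t_0)\}$, we have $\Gamma(\omega)\cap K=\bigcap_{t\in D}\bigl(H_t(\omega)\cap K\bigr)$, a decreasing-indexed family of compact sets, so by the finite intersection property $\Gamma(\omega)\cap K\neq\emptyset$ iff every finite sub-intersection is non-empty. For a finite subset $F\subseteq D$, the set $K\cap\bigcap_{t\in F}H_t(\omega)$ is compact and non-empty iff a certain finite linear program (minimize over $u\in K$ subject to finitely many linear inequalities) is feasible, and feasibility is a measurable event in $\omega$ because the constraint right-hand sides are measurable in $\omega$ and $K$ is fixed; concretely one can express non-emptiness as $\inf_{u\in K}\max_{t\in F}\bigl(u^\top(t-t_0)-f(\omega,t)+f(\omega,t_0)\bigr)\leq 0$, and the infimum of this continuous function of $u$ over the compact set $K$ is, by the usual reduction to a countable dense subset of $K$, a measurable function of $\omega$. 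Taking the (countable) intersection over all finite $F\subseteq D$ shows $\{\omega:\Gamma(\omega)\cap K\neq\emptyset\}\in\mathcal F$, so $\Gamma$ is $C$-measurable and Lemma~\ref{lem:measurable_selections} applies. The main obstacle, and the place demanding the most care, is precisely this last measurability bookkeeping: reducing the uncountably many subgradient inequalities and the uncountably many points of $K$ to countable families via continuity and the finite intersection property, so that one never has to take an uncountable supremum or intersection of events.
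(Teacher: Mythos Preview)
Your proposal is correct and follows essentially the same route as the paper's proof: both verify $C$-measurability of the subdifferential multifunction by using Lemma~\ref{lemma:subgradients_interior} to localize the subgradient inequality to a small ball, reducing to a countable dense subset via continuity of $f(\omega,\cdot)$, and then reducing the search over the compact $K$ to a countable dense subset via continuity of the resulting function of $u$. The only cosmetic difference is your intermediate passage through the finite intersection property; the paper instead defines $h(\omega,u)=\sup_{t\in D}\bigl(u^\top(t-t_0)-f(\omega,t)+f(\omega,t_0)\bigr)$ directly, observes that $h(\omega,\cdot)$ is convex and finite-valued (hence continuous), and concludes $\{\Gamma\cap K\neq\emptyset\}=\{\inf_{u\in\tilde K}h(\omega,u)\leq 0\}$ for a countable dense $\tilde K\subseteq K$ in one step.
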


\begin{proof}
	Let $\Gamma=\partial f(t_0)$ be the set of subgradients of $f$ at $t_0$ (that is, for all $\omega\in\Omega$, $\Gamma(\omega)=\partial \left(f(\omega,\cdot)\right)(t_0)$). Since $t_0\in\inter(G_0)$, $\Gamma$ only takes non-empty values. Moreover, by Lemma~\ref{lem:prop_subgrad}, it always takes closed values, so $\Gamma$ is a $\mathcal C$-valued multifunction. Hence, it is sufficient to check that it is $C$-measurable in order to apply Lemma~\ref{lem:measurable_selections}.

Let $K\subseteq\R^d$ be any arbitrary compact set. Lemma~\ref{lemma:subgradients_interior} yields that $\Gamma\cap K\neq\emptyset$ if and only if there exists $u\in K$ with the property that $\sup_{t\in B(t_0,\varepsilon)} \left(u^\top (t-t_0)-f(t)+f(t_0)\right)\leq 0$ where $\varepsilon>0$ is any small enough positive number satisfying that $B(t_0,\varepsilon)\subseteq \inter(G_0)$. Since $f$ is convex, it is continuous on $\inter(G)$ and, hence, on $B(t_0,\varepsilon)$. Let $C$ be a fixed dense, countable subset of $B(t_0,\varepsilon)$. Then, $\Gamma\cap K\neq\emptyset$ if and only if there exists $u\in K$ for which $\sup_{t\in C} \left(u^\top (t-t_0)-f(t)+f(t_0)\right)\leq 0$. Let $h(\omega,u)=\sup_{t\in C} \left(u^\top (t-t_0)-f(\omega,t)+f(\omega,t_0)\right)$, for all $\omega\in\Omega$ and $u\in\R^d$ (again, here, we emphasize the dependence on $\omega\in\Omega$ for clarity, even though it was omitted above). First, note that for all $u\in\R^d$, $h(\cdot,u)$ is measurable, as the supremum of a countable family of measurable functions. Second, for all $\omega\in\Omega$, the function $h(\omega,\cdot)$ is convex as the supremum of affine functions, and it only takes finite values: Indeed, $C\subseteq B(t_0,\varepsilon)$ is bounded and $f(\omega,\cdot)$ is continuous on $B(t_0,\varepsilon)$. Hence, $h(\omega,\cdot)$ is continuous on $\R^d$. Therefore, since $K$ is compact, $\Gamma(\omega)\cap K\neq \emptyset$ if and only if $\min_{u\in K}h(\omega,u)\leq 0$, if and only if $\inf_{u\in \tilde K}h(\omega,u)\leq 0$, where $\tilde K$ is a fixed, countable, dense subset of $K$. Therefore, we obtain
$\DS \{\omega\in\Omega:\Gamma(\omega)\cap K \neq\emptyset\} = \{\omega\in \Omega: \inf_{u\in \tilde K} h(\omega,u)\leq 0\}$ which is measurable, since $\inf_{u\in \tilde K} h(\cdot,u)$ is a measurable map.

\end{proof}

Finally, let us state an incredibly simple yet powerful result that shows that for convex functions, there is no need to apply any dominated convergence theorem in order to swap expectations and (sub-) gradients. It is very easy to check that if $f_1$ and $f_2$ are two convex functions on a convex set $G_0\subseteq\R^d$, then for all $t_0\in G_0$, $\partial f_1(t_0)+\partial f_2(t_0)\subseteq \partial (f_1+f_2)(t_0)$.\footnote{The other inclusion is also true if $G_0$ has non-empty interior but, perhaps surprisingly, requires a nontrivial argument.} The following lemma shows that this fact still holds for generalized sums of convex functions.

\begin{theorem} \label{thm:measurable_subgradient}
	Let $f$ be a random convex function defined on a convex set $G_0\subseteq \R^d$. For all $t\in\mathrm{int}(G_0)$, let $g(t)$ be a measurable subgradient of $f$ at $t$. Let $p\geq 1$ be a real number and assume that for all $t\in G_0$, $f(t)\in L^p(\PP)$ and denote by $F(t)=\E[f(t)]$. Then, $F$ is a convex function and for all $t\in G_0$, $g(t)\in L^p(\PP)$ and 
	$$\E[g(t)]\in \partial F(t).$$
\end{theorem}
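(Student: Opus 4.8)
The plan is to split the claim into its three parts and prove them in order of increasing (but still very mild) difficulty. First I would check that $F$ is convex: for $t_1,t_2\in G_0$ and $\lambda\in[0,1]$, convexity of $f(\omega,\cdot)$ for every $\omega$ gives the pointwise inequality $f(\lambda t_1+(1-\lambda)t_2)\le \lambda f(t_1)+(1-\lambda)f(t_2)$, and since $\PP$ is a probability measure we have $L^p(\PP)\subseteq L^1(\PP)$, so all three terms are integrable; taking expectations yields $F(\lambda t_1+(1-\lambda)t_2)\le \lambda F(t_1)+(1-\lambda)F(t_2)$.

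The substantive step is the $L^p$-integrability of $g(t)$. Fix $t\in\mathrm{int}(G_0)$ and choose $r>0$ small enough that the $2d$ points $t\pm re_j$ ($e_1,\dots,e_d$ the canonical basis) all lie in $G_0$. For any $\omega$ and any $u\in\partial\bigl(f(\omega,\cdot)\bigr)(t)$, the subgradient inequality evaluated at $t+re_j$ and at $t-re_j$ gives $r\,u_j\le f(\omega,t+re_j)-f(\omega,t)$ and $-r\,u_j\le f(\omega,t-re_j)-f(\omega,t)$, whence
\[
\|u\|\le \|u\|_1=\sum_{j=1}^d|u_j|\le \frac1r\sum_{j=1}^d\Bigl(\bigl|f(\omega,t+re_j)\bigr|+\bigl|f(\omega,t-re_j)\bigr|+2\bigl|f(\omega,t)\bigr|\Bigr).
\]
Applying this with $u=g(t)(\omega)$ bounds $\|g(t)\|$ by a fixed linear combination of the random variables $|f(t\pm re_j)|$ and $|f(t)|$, all of which lie in $L^p(\PP)$ by hypothesis; since $L^p(\PP)$ is a vector space, $\|g(t)\|\in L^p(\PP)$, that is, $g(t)\in L^p(\PP)\subseteq L^1(\PP)$.

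It remains to show $\E[g(t)]\in\partial F(t)$. By definition of a (measurable) subgradient, for every $\omega$ and every $s\in G_0$ one has $f(\omega,s)\ge f(\omega,t)+g(t)(\omega)^\top(s-t)$; all three terms are in $L^1(\PP)$ (the last one by the previous step, since $s-t$ is a fixed vector), so taking expectations preserves the inequality and gives $F(s)\ge F(t)+\E[g(t)]^\top(s-t)$ for all $s\in G_0$, which is precisely the statement $\E[g(t)]\in\partial F(t)$.

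The only point requiring a little care — and the reason I use the fixed coordinate directions $\pm e_j$ rather than the textbook one-line estimate $\|u\|\le r^{-1}\bigl(f(t+ru/\|u\|)-f(t)\bigr)$ — is that the dominating random variable must be chosen independently of $\omega$ in order to be a genuine measurable element of $L^p(\PP)$; the data-dependent direction $g(t)(\omega)/\|g(t)(\omega)\|$ would make the dominating term $f\bigl(\omega,t+r\,g(t)(\omega)/\|g(t)(\omega)\|\bigr)$ depend on $\omega$ in an uncontrolled way, and it is not one of the given random variables $f(s)$ with $s$ fixed. Apart from that, the argument is just the pointwise subgradient inequality together with $L^p(\PP)\subseteq L^1(\PP)$.
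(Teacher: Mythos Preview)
Your proof is correct and follows essentially the same approach as the paper. Both arguments bound the coordinates (equivalently, the linear functionals $g(t)^\top v$) of the subgradient by applying the subgradient inequality in the two opposite directions $t\pm r e_j$ (the paper uses a generic $v$ in place of $e_j$), and both deduce $\E[g(t)]\in\partial F(t)$ by taking expectations in the pointwise subgradient inequality; your explicit remark on why the data-dependent direction $g(t)/\|g(t)\|$ must be avoided is a useful clarification that the paper does not spell out.
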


\begin{proof}

Fix $t_0\in \inter(G_0)$ and let $g(t_0)$ be a measurable subgradient of $h$ at $t_0$ (the existence of which is guaranteed by Theorem~\ref{thm:measurable_subgradient}).  In order to check that $g(t_0)\in L^p(\PP)$, it is necessary and sufficient to check that each of its $d$ coordinates are in $L^p(\PP)$ or, equivalently, that for all $v\in\R^d$, $|g(t_0)^\top v|^p$ is integrable. Fix an arbitrary $v\in \R^d$ and let $\varepsilon>0$ be such that $t_0+\varepsilon v$ and $t_0-\varepsilon v$ are in $G_0$ (such an $\varepsilon$ exists because $t_0\in \inter(G_0)$). Then, by definition of subgradients, $g(t_0)^\top v\leq \varepsilon^{-1}(f(t_0+\varepsilon v)-f(t_0))$ and $-g(t_0)^\top v\leq \varepsilon^{-1}(f(t_0-\varepsilon v)-f(t_0))$. That is, 
$$|g(t_0)^\top v|\leq \max(\varepsilon^{-1}(f(t_0+\varepsilon v)-f(t_0)),\varepsilon^{-1}(f(t_0-\varepsilon v)-f(t_0))).$$
Since the right hand side is in $L^p(\PP)$ by assumption, so is $g(t_0)^\top v$. The vector $v$ was arbitrary, so we conclude that $g(t_0)\in L^p(\PP)$. 

Now, for the rest of the proof, simply note that, again, by definition of subgradients, 
$$f(t)\geq f(t_0)+g(t_0)^\top (t-t_0)$$
holds for all $t\in G_0$. Taking the expectation, which is linear, yields that 
$$F(t)\geq F(t_0)+\E[g(t_0)]^\top (t-t_0)$$
which concludes the proof. 

\end{proof}

\begin{remark}
\,
\begin{itemize}
	\item In fact, to obtain that $g(t_0)\in L^p(\PP)$, it would have been sufficient to assume that $f(t)\in L^p(\PP)$ for all $t\in B(t_0,\varepsilon)$, for any arbitrary, small enough $\varepsilon>0$.
	\item As a consequence of Theorem~\ref{thm:measurable_subgradient}, if $F$ is differentiable at $t_0\in\inter(G_0)$, then $\E[g(t_0)]$ does not depend on the choice of the measurable selection $g(t_0)$ and it is automatically equal to $\nabla F(t_0)$ (since $\nabla F(t_0)$ is the only subgradient of $F$ at $t_0$, in that case). 
	\item In fact, Lemma~\ref{lem:diff_subgrad2} shows that if $F$ is differentiable at some $t_0\in\inter(G_0)$, then $f$ is almost surely differentiable at $t_0$, so in that case, any measurable selection $g(t_0)$ must satisfy $g(t_0)=\nabla f(t_0)$ almost surely.  
	\item To the best of our knowledge, the converse inclusion to Theorem~\ref{thm:measurable_subgradient} is unknown: Can all subgradients of $F$ at $t_0$ be written as $\E[g(t_0)]$ for some measurable $g(t_0)\in\partial f(t_0)$?
\end{itemize}
\end{remark}

\section{Consistency} \label{Sec:consistency}

Consistency of empirical risk minimizers with a convex loss function is automatically granted in a strong sense, thanks to Lemma~\ref{lem:convexmin} which allows to localize the $M$-estimator, for large enough $n$, in an arbitrarily small neighborhood of the set of population minimizers with probability $1$. In what follows, we consider a sequence $(\hat\theta_n)_{n\geq 1}$ of random variables such that with probability $1$, for all large enough $n$, $\hat \theta_n$ is a minimizer of $\Phi_n$ on $\Theta$. Existence of such a sequence is granted by Theorem~\ref{thm:measurable_min}.

\begin{theorem} \label{thm:consist}
	Assume that $\Theta^*$ is compact and non-empty. Then, $\dist(\hat\theta_n,\Theta^*)\xrightarrow[n\to\infty]{}0$ almost surely, as $n\to\infty$. 
\end{theorem}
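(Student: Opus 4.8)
The plan is to reduce the statement entirely to Theorem~\ref{thm:measurable_min}, or rather to the intermediate localization it establishes. First I would verify its hypotheses with $f_n=\Phi_n$, $f=\Phi$, $G_0=\Theta_0$ and $G=\Theta$: the maps $\Phi_n$ and $\Phi$ are random convex functions on the open convex set $\Theta_0$, since convexity of $\phi(x,\cdot)$ is preserved under finite averages and under taking expectations, measurability in $\omega$ is immediate, and finiteness follows from $\phi(\cdot,\theta)\in L^1(P)$. By the strong law of large numbers, for each fixed $\theta\in\Theta_0$ we have $\Phi_n(\theta)\xrightarrow[n\to\infty]{}\Phi(\theta)$ almost surely, and $\Theta^*=\Argmin_{\theta\in\Theta}\Phi(\theta)$ is non-empty and compact by assumption. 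Hence Theorem~\ref{thm:measurable_min} applies.

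Next, rather than only using the existence of \emph{one} measurable minimizer sequence converging to $\Theta^*$, I would invoke the stronger fact proved inside its Step~1, which is precisely \eqref{eqn:consist}: for every $\varepsilon>0$, with probability one one has $\emptyset\neq M_n\subseteq \Theta^*_\varepsilon$ for all large enough $n$, where $M_n=\Argmin_{\theta\in\Theta}\Phi_n(\theta)$ and $\Theta^*_\varepsilon=\{\theta\in\R^d:\dist(\theta,\Theta^*)\leq\varepsilon\}$. Since by construction (and as recalled just before the statement) $\hat\theta_n\in M_n$ for all large $n$ with probability one, this gives $\dist(\hat\theta_n,\Theta^*)\leq\varepsilon$ for all large $n$, almost surely.

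Finally I would take $\varepsilon=1/k$ for $k\in\N^*$ and intersect the corresponding probability-one events over this countable family: on the intersection, $\limsup_{n\to\infty}\dist(\hat\theta_n,\Theta^*)\leq 1/k$ for every $k\geq 1$, so $\dist(\hat\theta_n,\Theta^*)\to 0$, which is the claim.

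I do not expect a real obstacle here, since the analytic content is already carried out in Theorem~\ref{thm:measurable_min}. The only point requiring a little care is that its Step~1 localizes the \emph{entire} argmin set $M_n$, not merely a single measurable selection, which is exactly what licenses the conclusion for an arbitrary minimizer sequence $(\hat\theta_n)$. If one wished to keep the proof self-contained, one could instead re-derive \eqref{eqn:consist} on the spot from Corollary~\ref{cor:Rockafellar_as} (uniform convergence of $\Phi_n$ to $\Phi$ on compact subsets of $\Theta_0$, almost surely) together with the minimum principle of Lemma~\ref{lem:convexmin}, applied on the compact set $\Theta^*_\varepsilon$, reproducing verbatim the chain of inequalities in that proof.
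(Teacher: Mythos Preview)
Your proposal is correct and matches the paper's approach exactly: the paper simply states that Theorem~\ref{thm:consist} is a direct consequence of Theorem~\ref{thm:measurable_min}, and you have spelled out precisely how, including the subtle but important point that Step~1 localizes the entire argmin set $M_n$, so the conclusion holds for any measurable minimizer sequence $(\hat\theta_n)$, not just the particular one constructed there.
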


The proof of this theorem can be found in \cite{haberman1989concavity} (the only difference here being that we do not assume that $\Theta=\R^d$), and it is a direct consequence of Theorem~\ref{thm:measurable_min} above.

\begin{remark}
Theorem~\ref{thm:consist} shows that any empirical minimizer becomes, with probability $1$, arbitrarily close to the set of population minimizers $\Theta^*$. A converse statement is generally not true, that is, there can be elements of $\Theta^*$ that  may never be approached by any empirical minimizer. For instance, let $E=\R^d$, $\Theta=B(0,1)$ and $\phi(x,\theta)=x^\top\theta$. Furthermore, assume that $X_1$ has the standard normal distribution. Then, $\Phi(\theta)=\E[X]^\top\theta=0$ for all $\theta\in \Theta$, so $\Theta^*=\Theta$. However, $\Phi_n(\theta)=\bar X_n^\top \theta$, so with probability $1$, the empirical minimizer is unique, given by $\hat\theta_n=-\bar X_n/\|\bar X_n\|$. 
\end{remark}

\section{Asymptotic distribution} \label{Sec:asymptnorm}

In this section, we assume that $\Argmin_{\theta\in\Theta} \Phi(\theta)$ is a singleton and we denote by $\theta^*=\argmin_{\theta\in\Theta}\Phi(\theta)$. 

\subsection{Non-differentiable case}

We first study asymptotic properties of $\hat\theta_n$ without assuming differentiability of $\Phi$ at $\theta^*$. That is, $\partial\Phi(\theta^*)$ may not be not a singleton. 

The following useful property is fundamental in that case. Recall that for a non-empty convex subset $C\subseteq\R^d$, we denote by $h_C:\R^d\to\R\cup\{\infty\}$ its support function.

\begin{proposition} \label{prop:non-smooth}
	Assume that $\phi(\cdot,\theta)\in L^2(P)$ for all $\theta\in\Theta_0$. Let $(\rho_n)_{n\geq 1}$ be any non-decreasing sequence of positive numbers diverging to $\infty$ as $n\to\infty$. Then, for all $\theta\in\Theta_0$ and  $t\in\R^d$,
$$\rho_n(\Phi_n(\theta+t/\rho_n)-\Phi_n(\theta))\xrightarrow[n\to\infty]{}h_{\partial\Phi(\theta)}(t)$$
in probability.
\end{proposition}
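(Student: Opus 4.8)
The plan is to reduce the statement to a pointwise analysis at each fixed $\theta$ and $t$, and then to exploit convexity together with the law of large numbers. First I would fix $\theta\in\Theta_0$ and $t\in\R^d$. Consider the difference quotient
\[
R_n := \rho_n\bigl(\Phi_n(\theta+t/\rho_n)-\Phi_n(\theta)\bigr) = \frac1n\sum_{i=1}^n \rho_n\bigl(\phi(X_i,\theta+t/\rho_n)-\phi(X_i,\theta)\bigr).
\]
Because $\phi(X_i,\cdot)$ is convex, for each $i$ the map $\varepsilon\mapsto \varepsilon^{-1}(\phi(X_i,\theta+\varepsilon t)-\phi(X_i,\theta))$ is nondecreasing in $\varepsilon>0$ and, as $\varepsilon\downarrow 0$, decreases to the (one-sided) directional derivative $\diff^+\phi(X_i,\theta;t) = h_{\partial\phi(X_i,\theta)}(t)$ — the last equality being the standard fact that the one-sided directional derivative of a convex function at an interior point is the support function of its subdifferential (this uses $\theta\in\inter(\Theta_0)$ and the $L^2\subseteq L^1$ hypothesis so that $\partial\phi(X_i,\theta)$ is a nonempty compact convex set). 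Taking expectations, $\E[\rho_n(\phi(X_1,\theta+t/\rho_n)-\phi(X_1,\theta))] \to \E[\diff^+\phi(X_1,\theta;t)]$ by monotone convergence (the quotients are dominated above by their value at a fixed $\varepsilon_0$ with $\theta+\varepsilon_0 t\in\Theta_0$, which is in $L^1$, and below similarly, so one can apply the monotone/dominated convergence theorem to the decreasing family). Moreover $\E[\diff^+\phi(X_1,\theta;t)] = h_{\partial\Phi(\theta)}(t)$: indeed $\Phi$ convex gives $\diff^+\Phi(\theta;t) = h_{\partial\Phi(\theta)}(t)$, and swapping expectation with the directional-derivative limit is again monotone convergence, which identifies $\diff^+\Phi(\theta;t) = \E[\diff^+\phi(X_1,\theta;t)]$. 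So the deterministic centering of $R_n$ converges to the claimed limit.

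The second step is to control the fluctuation $R_n - \E[R_n]$ and show it vanishes in probability. The natural tool is a variance bound: $\var(R_n) = n^{-1}\var\bigl(\rho_n(\phi(X_1,\theta+t/\rho_n)-\phi(X_1,\theta))\bigr)$. Here the $L^2$ assumption enters. For $\rho_n\geq 1$ (true eventually), the quotient $\rho_n(\phi(X_1,\theta+t/\rho_n)-\phi(X_1,\theta))$ is sandwiched, by monotonicity of difference quotients, between $\phi(X_1,\theta)-\phi(X_1,\theta-t)$ type quantities and $\phi(X_1,\theta+t)-\phi(X_1,\theta)$ — more precisely it lies in the interval with endpoints $\diff^+\phi(X_1,\theta;t)$ and $\phi(X_1,\theta+t)-\phi(X_1,\theta)$, both of which belong to $L^2(P)$ (the former is dominated in absolute value by $\max(\phi(X_1,\theta+\varepsilon_0 t)-\phi(X_1,\theta),\phi(X_1,\theta-\varepsilon_0 t)-\phi(X_1,\theta))/\varepsilon_0$, using the argument from the proof of Theorem~\ref{thm:measurable_subgradient}). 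Hence $\sup_n \E\bigl[(\rho_n(\phi(X_1,\theta+t/\rho_n)-\phi(X_1,\theta)))^2\bigr] < \infty$, so $\var(R_n) = O(1/n)\to 0$, and Chebyshev gives $R_n - \E[R_n]\to 0$ in probability. Combining with the first step yields $R_n \to h_{\partial\Phi(\theta)}(t)$ in probability, which is exactly the assertion.

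I expect the main obstacle to be the bookkeeping around the interchange of expectation and the directional-derivative limit, i.e. rigorously justifying $\diff^+\Phi(\theta;t) = \E[\diff^+\phi(X_1,\theta;t)]$ and the convergence of $\E[R_n]$ — one must produce, for the decreasing family of difference quotients, an integrable lower bound (not just the obvious upper bound), which is available because $\varepsilon^{-1}(\phi(X_1,\theta+\varepsilon t)-\phi(X_1,\theta))\geq -\varepsilon_0^{-1}(\phi(X_1,\theta-\varepsilon_0 t)-\phi(X_1,\theta))$ for $\varepsilon\leq\varepsilon_0$ by convexity, and the right side is in $L^1$. With that domination in hand, monotone convergence (or dominated convergence) applies cleanly. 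A secondary, purely technical point is that $\rho_n$ is only assumed nondecreasing and divergent, not bounded below by $1$, so one should either pass to $n$ large enough that $\rho_n\geq 1$ or absorb the finitely many small $\rho_n$ into the error — harmless since convergence in probability is unaffected by finitely many terms.
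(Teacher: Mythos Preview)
Your argument is correct, and it takes a genuinely more elementary route than the paper's.

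The paper introduces a measurable subgradient selection $g(\cdot,\theta)$ and defines the centered remainder
\[
F_n(t)=\rho_n\bigl(\Phi_n(\theta+t/\rho_n)-\Phi_n(\theta)\bigr)-\rho_n\bigl(\Phi(\theta+t/\rho_n)-\Phi(\theta)\bigr)-t^\top\Bigl(\tfrac1n\sum_i g(X_i,\theta)-\E[g(X_1,\theta)]\Bigr),
\]
writes it as a sum of $Z_{i,n}-\E[Z_{i,n}]$, and bounds $0\le Z_{i,n}\le n^{-1}t^\top(g(X_i,\theta+t/\rho_n)-g(X_i,\theta))$ via convexity. Monotonicity of subgradients (Lemma~\ref{lem:monoton_subgrad}) shows this upper bound is nonincreasing in $n$, giving $\var(F_n(t))\le \E[Y_1^2]/n\to 0$. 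The conclusion then follows after peeling off the subgradient LLN term and the deterministic directional-derivative term.

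You bypass the subgradient apparatus entirely: you work directly with the difference quotient $\rho_n(\phi(X_1,\theta+t/\rho_n)-\phi(X_1,\theta))$, observe that monotonicity of secant slopes sandwiches it between two fixed $L^2$ quantities (namely $\varepsilon_0^{-1}(\phi(X_1,\theta)-\phi(X_1,\theta-\varepsilon_0 t))$ and $\varepsilon_0^{-1}(\phi(X_1,\theta+\varepsilon_0 t)-\phi(X_1,\theta))$), and conclude $\var(R_n)=O(1/n)$. This is cleaner for the present proposition, since it avoids measurable selections and the auxiliary Lemma~\ref{lem:monoton_subgrad}. The paper's decomposition, on the other hand, is exactly what is reused (with more care) in the proof of Theorem~\ref{thm:asympt_norm}, where isolating the subgradient sum is essential to capture the CLT term; so the paper's version is a deliberate warm-up for the harder result.

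One small simplification of your write-up: your discussion of ``swapping expectation with the directional-derivative limit'' is unnecessary for the mean. By linearity, $\E[R_n]=\rho_n(\Phi(\theta+t/\rho_n)-\Phi(\theta))$ on the nose, and this converges to $\diff^+\Phi(\theta;t)=h_{\partial\Phi(\theta)}(t)$ by definition of the directional derivative of $\Phi$ together with Lemma~\ref{lem:dirder_support}. No monotone convergence is needed there; it is only needed (as you correctly note) if one separately wants the identity $\diff^+\Phi(\theta;t)=\E[\diff^+\phi(X_1,\theta;t)]$, which is Lemma~\ref{lem:diff_subgrad2} in the paper but is not required for this proposition.
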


\begin{proof}

Fix $\theta\in\Theta_0$. For all $t\in\R^d$, define 
\begin{align*}
F_n(t) & = \rho_n\left(\Phi_n(\theta+t/\rho_n)-\Phi_n(\theta)-\frac{1}{n\rho_n}t^\top \sum_{i=1}^n g(X_i,\theta)\right) \\
& \quad\quad\quad -\rho_n\left(\Phi(\theta+t/\rho_n)-\Phi(\theta)-\frac{1}{\rho_n}t^\top \E[g(X_1,\theta)]\right).
\end{align*}
Write $F_n(t)=\sum_{i=1}^n(Z_{i,n}-\E[Z_{i,n}])$
where $Z_{i,n}=\frac{\rho_n}{n}(\phi(X_i,\theta+t/\rho_n)-\phi(X_i,\theta)-(1/\rho_n)t^\top g(X_i,\theta))$, for all $i=1,\ldots,n$. 
Convexity of $\phi(X_i,\cdot)$ yields that $0\leq Z_{i,n}\leq \frac{1}{n}t^\top (g(X_i,\theta+t/\rho_n)-g(X_i,\theta))$, for all $i=1,\ldots,n$. By Theorem~\ref{thm:measurable_subgradient}, each $Z_{i,n}$, $i=1,\ldots,n$, is square-integrable. Hence, taking the square and the expectation in the last display, 
$$\E[Z_{i,n}^2]\leq \frac{1}{n^2}\E[Y_n^2]$$
where $Y_n=t^\top (g(X_1,\theta+t/\rho_n)-g(X_1,\theta))$. Since $(\rho_n)_{n\geq 1}$ is non-decreasing, Lemma~\ref{lem:monoton_subgrad} implies that the sequence $(Y_n)_{n\geq 1}$ is non-increasing, yielding that $\E[Z_{i,n}^2]\leq \frac{1}{n^2}\E[Y_1^2]$ and, by independence of $X_1,X_2,\ldots$,
\begin{equation*}
	\var\left(\sum_{i=1}^n Z_{i,n}\right) = \sum_{i=1}^n \var(Z_{i,n})\leq \sum_{i=1}^n \E[Z_{i,n}^2] \leq \frac{\E[Y_1^2]}{n} \xrightarrow[n\to\infty]{} 0.
\end{equation*}
We conclude that $F_n(t)\xrightarrow[n\to\infty]{}0$ in $L^2$ and, hence, in probability. Now, rewrite $F_n(t)$ as
\begin{align}
	F_n(t) & = \rho_n(\Phi_n(\theta+t/\rho_n)-\Phi_n(\theta)) \nonumber \\
	& \quad\quad - t^\top\left(\frac{1}{n}\sum_{i=1}^n g(X_i,\theta)-\E[g(X_1,\theta)]\right) \label{term:1} \\
	& \quad\quad -\rho_n\left(\Phi(\theta+t/\rho_n)-\Phi(\theta)\right). \label{term:2}
\end{align}
The law of large numbers yields that the term \eqref{term:1} converges to $0$ in probability, and the term in \eqref{term:2} goes to $\diff^+\Phi(\theta;t)$ as $n\to\infty$. The result then follows from Lemma~\ref{lem:dirder_support}. 
\end{proof}

As a consequence, we obtain the following theorem. 

\begin{theorem} \label{thm:non-diff}
	Assume that $\phi(\cdot,\theta)\in L^2(P)$ for all $\theta\in\Theta_0$ and that  $0\in\inter(\partial\Phi(\theta^*))$. Then, $\hat\theta_n=\theta^*$ with probability going to $1$ as $n\to\infty$. 
\end{theorem}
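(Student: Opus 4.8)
The plan is to prove the stronger fact that, almost surely, $\theta^*$ is the \emph{unique} minimizer of $\Phi_n$ on $\Theta$ for all large $n$. Since $0\in\inter(\partial\Phi(\theta^*))$, fix $\delta>0$ with $B(0,2\delta)\subseteq\partial\Phi(\theta^*)$. The crux is to show that, almost surely, $B(0,\delta)\subseteq\partial\Phi_n(\theta^*)$ for all large $n$. Once this is in hand, for any $\theta\in\Theta\setminus\{\theta^*\}$ the subgradient inequality for $\Phi_n$ at $\theta^*$, applied to the vector $u:=\delta(\theta-\theta^*)/\|\theta-\theta^*\|\in B(0,\delta)\subseteq\partial\Phi_n(\theta^*)$, gives $\Phi_n(\theta)\geq\Phi_n(\theta^*)+\delta\|\theta-\theta^*\|>\Phi_n(\theta^*)$, so $\theta^*$ is the unique minimizer of $\Phi_n$ on $\Theta$, whence $\hat\theta_n=\theta^*$ (a minimizer of $\Phi_n$ on $\Theta$ exists for large $n$ by Theorem~\ref{thm:measurable_min}, as $\Theta^*=\{\theta^*\}$ is compact and non-empty). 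Moreover, since a closed convex set contains $B(0,\delta)$ if and only if its support function is at least $\delta\|\cdot\|$, the crux is equivalent to $h_{\partial\Phi_n(\theta^*)}(t)\geq\delta$ for all $t$ in the unit sphere $\Sp^{d-1}$.

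To prove this I would pass to directional derivatives. Because $\theta^*$ lies in the open set $\Theta_0$, the sets $\partial\Phi(\theta^*)$ and $\partial\Phi_n(\theta^*)$ are compact, all directional derivatives at $\theta^*$ exist, and by Lemma~\ref{lem:dirder_support} together with additivity of directional derivatives one has $h_{\partial\Phi_n(\theta^*)}(t)=\diff^+\Phi_n(\theta^*;t)=\frac1n\sum_{i=1}^n\diff^+\phi(X_i,\theta^*;t)$ and $h_{\partial\Phi(\theta^*)}(t)=\diff^+\Phi(\theta^*;t)$, for every $t\in\R^d$. Next, for fixed $t$, I would check the interchange $\E[\diff^+\phi(X_1,\theta^*;t)]=\diff^+\Phi(\theta^*;t)$: choosing $\varepsilon_0>0$ with $\theta^*\pm\varepsilon_0 t\in\Theta_0$, convexity makes $\varepsilon^{-1}(\phi(X_1,\theta^*+\varepsilon t)-\phi(X_1,\theta^*))$ monotone in $\varepsilon$ and convergent to $\diff^+\phi(X_1,\theta^*;t)$ as $\varepsilon\downarrow0$, while for $0<\varepsilon\leq\varepsilon_0$ it is squeezed between the two $L^1(P)$ random variables $-\varepsilon_0^{-1}(\phi(X_1,\theta^*-\varepsilon_0 t)-\phi(X_1,\theta^*))$ and $\varepsilon_0^{-1}(\phi(X_1,\theta^*+\varepsilon_0 t)-\phi(X_1,\theta^*))$; dominated convergence then yields both $\diff^+\phi(X_1,\theta^*;t)\in L^1(P)$ and the interchange, since $\varepsilon^{-1}(\Phi(\theta^*+\varepsilon t)-\Phi(\theta^*))\to\diff^+\Phi(\theta^*;t)$. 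The strong law of large numbers then gives $\frac1n\sum_{i=1}^n\diff^+\phi(X_i,\theta^*;t)\to\diff^+\Phi(\theta^*;t)$ almost surely, for every $t$.

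Finally, I would invoke Corollary~\ref{cor:Rockafellar_as}: the random convex functions $t\mapsto\frac1n\sum_{i=1}^n\diff^+\phi(X_i,\theta^*;t)$ (finite-valued averages of support functions of compact convex sets) converge pointwise almost surely to the finite-valued convex function $\diff^+\Phi(\theta^*;\cdot)$, hence uniformly on the compact set $\Sp^{d-1}$, almost surely. Since $\diff^+\Phi(\theta^*;t)=h_{\partial\Phi(\theta^*)}(t)\geq h_{B(0,2\delta)}(t)=2\delta$ for every $t\in\Sp^{d-1}$, this gives, almost surely and for all large $n$, $h_{\partial\Phi_n(\theta^*)}(t)\geq\delta$ on $\Sp^{d-1}$, which is the crux.

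The step I expect to be the main obstacle is the interchange $\E[\diff^+\phi(X_1,\theta^*;t)]=\diff^+\Phi(\theta^*;t)$ (and the concomitant integrability): this is precisely where the classical smooth theory would require a local domination assumption on derivatives, and here it is bought for free from the monotonicity of difference quotients of convex functions. Everything downstream is routine given the strong law of large numbers and Corollary~\ref{cor:Rockafellar_as}; note in particular that this argument uses only $\phi(\cdot,\theta)\in L^1(P)$, not the $L^2(P)$ hypothesis in the statement (which would be relevant to a proof routed instead through Proposition~\ref{prop:non-smooth}).
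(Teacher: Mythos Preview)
Your proof is correct and takes a genuinely different, sharper route from the paper's. The paper proves Theorem~\ref{thm:non-diff} via Proposition~\ref{prop:non-smooth}: it shows that the rescaled increments $\rho_n(\Phi_n(\theta^*+t/\rho_n)-\Phi_n(\theta^*))$ converge in probability to $h_{\partial\Phi(\theta^*)}(t)$ for \emph{every} non-decreasing $\rho_n\uparrow\infty$ (a variance computation that is where the $L^2$ hypothesis enters), upgrades to uniform convergence on balls via Corollary~\ref{cor:Rockafellar_as}, localizes the minimizer with Lemma~\ref{lem:convexmin}, and finally invokes Lemma~\ref{lem:convergence_proba} to pass from ``$\rho_n(\hat\theta_n-\theta^*)\to0$ in probability for every $\rho_n$'' to ``$\PP(\hat\theta_n=\theta^*)\to1$''. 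You bypass the rescaling entirely and work directly with the subdifferential of $\Phi_n$ at $\theta^*$: the strong law of large numbers on each directional derivative $\diff^+\phi(X_i,\theta^*;t)$ (the interchange you single out is exactly Lemma~\ref{lem:diff_subgrad2}), followed by Corollary~\ref{cor:Rockafellar_as} applied to the sublinear functions $t\mapsto h_{\partial\Phi_n(\theta^*)}(t)$ on $\R^d$, gives $0\in\inter(\partial\Phi_n(\theta^*))$ almost surely for all large $n$. This is more elementary (no $\rho_n$-trick, no Lemma~\ref{lem:convergence_proba}), yields the strictly stronger almost-sure conclusion, and, as you note, uses only the standing $L^1$ assumption---thereby answering, for this particular theorem, the question the paper raises in the remark closing the non-differentiable subsection. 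What the paper's detour buys is reusability: Proposition~\ref{prop:non-smooth} is the common engine behind Theorems~\ref{thm:non-diff} and~\ref{thm:First_order_asymp}, whereas your argument is tailored to the hypothesis $0\in\inter(\partial\Phi(\theta^*))$.
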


Note that the assumption that $0\in\inter(\partial\Phi(\theta^*))$ readily implies that $\theta^*$ must be the unique minimizer of $\phi$ on $\Theta$ and even on $\Theta_0$. It also implies that $\Phi$ is not differentiable at $\theta^*$.

\begin{proof}
Let $(\rho_n)_{n\geq 1}$ be any non-decreasing sequence of positive numbers diverging to $\infty$ as $n\to\infty$. Since $\Theta_0$ is open, we can find $r>0$ such that $B(\theta^*,r)\subseteq \Theta_0$. For all $n\geq 1$, denote by $T_n=\{t\in\R^d:\theta^*+t/\rho_n\in \Theta\}=\rho_n(\Theta-\theta^*)$.  Finally, set $G_n(t)=\rho_n(\Phi_n(\theta^*+t/\rho_n)-\Phi_n(\theta^*))$, for all $t\in\R^d$ such that $\theta^*+t/\rho_n\in\Theta_0$. By definition of $\hat\theta_n$, $\hat t_n:=\rho_n(\hat\theta_n-\theta^*)$ is a minimizer of $G_n$ on $T_n$ for all large enough $n$, with probability $1$. 

Now, fix $\varepsilon>0$.
Combining Proposition~\ref{prop:non-smooth}, Corollary~\ref{cor:Rockafellar_as} and Lemma~\ref{lem:dirder_support}, we get 
$$\sup_{t\in B(0,\varepsilon)} |G_n(t)-h_{\partial \Phi(\theta^*)}(t)| \xrightarrow[n\to\infty]{} 0$$
in probability (note that $B(0,\varepsilon)\subseteq \rho_n(\Theta_0-\theta^*)$ for all large enough integers $n$).
Now, since $0\in\inter(\partial\Phi(\theta^*))$, the quantity $\eta:=\min_{u\in\R^d:\|u\|=1}h_{\partial\Phi(\theta^*)}(u)$ is positive. 

Assume that $n$ is large enough so $\sup_{t\in B(0,\varepsilon)} |G_n(t)-h_{\partial \Phi(\theta^*)}(t)|\leq \varepsilon\eta/2$ with probability at least $1-\varepsilon$. When this inequality is satisfied, we get that, for all $t\in T_n$ with $\|t\|=\varepsilon$, 
\begin{align*}
	G_n(t) & \geq h_{\partial \Phi(\theta^*)}(t)-\varepsilon\eta/2 \\
	& = \varepsilon h_{\partial \Phi(\theta^*)}(t/\varepsilon) - \varepsilon\eta/2 \quad \mbox{ by positive homogeneity of } h_{\partial\Phi(\theta^*)} \\
	& \geq \varepsilon\eta-\varepsilon\eta/2 \quad \mbox{ by definition of } \eta \\
	& > \varepsilon\eta/2 \\
	& > 0 = G_n(0)
\end{align*}
yielding, thanks to Lemma~\ref{lem:convexmin}, that $\|\hat t_n\|$ cannot be larger than $\varepsilon$. Hence, we have shown that for all large enough $n$, it holds with probability at least $1-\varepsilon$ that $\|\rho_n(\hat\theta_n-\theta^*)\|\leq \varepsilon$. That is, $\rho_n(\hat\theta_n-\theta^*)\xrightarrow[n\to\infty]{}0$ in probability. Since this must hold for any positive, non-decreasing sequence $(\rho_n)_{n\geq 1}$ diverging to $\infty$ as $n\to\infty$, Lemma~\ref{lem:convergence_proba} implies the desired statement.

\end{proof}

%
%
%
%
%
%

Let $C$ be the support cone to $\Theta$ at $\theta^*$. Recall that the first order condition (Lemma~\ref{lem:FOC_gen}) yields that $C\subseteq h_{\partial\Phi(\theta^*)}^{-1}([0,\infty))$. The next result extends Theorem~\ref{thm:non-diff}.

\begin{theorem} \label{thm:First_order_asymp}
Assume that $\phi(\cdot,\theta)\in L^2(P)$ for all $\theta\in\Theta_0$ and that $h_{\partial\Phi(\theta^*)}(t)>0$ for all $t\in C\setminus\{0\}$. Then, with probability going to $1$ as $n\to\infty$, $\hat\theta_n=\theta^*$. 
\end{theorem}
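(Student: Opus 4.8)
The plan is to follow the proof of Theorem~\ref{thm:non-diff} almost verbatim, inserting one geometric observation that accounts for the weaker hypothesis. If $C=\{0\}$, then convexity of $\Theta$ forces $\Theta=\{\theta^*\}$ and there is nothing to prove, so assume $C\neq\{0\}$. Fix an arbitrary non-decreasing sequence $(\rho_n)_{n\geq1}$ of positive numbers with $\rho_n\to\infty$, and, exactly as in that proof, put $T_n=\rho_n(\Theta-\theta^*)$ and $G_n(t)=\rho_n(\Phi_n(\theta^*+t/\rho_n)-\Phi_n(\theta^*))$, so that $\hat t_n:=\rho_n(\hat\theta_n-\theta^*)$ is, with probability $1$ and for all large $n$, a minimizer of the convex function $G_n$ over the closed convex set $T_n$, with $G_n(0)=0$ and $0\in T_n$. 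It is enough to show that $\hat t_n\to0$ in probability: since $(\rho_n)$ is arbitrary, Lemma~\ref{lem:convergence_proba} then yields $\PP(\hat\theta_n=\theta^*)\to1$.

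The uniform-convergence step is identical to the one in Theorem~\ref{thm:non-diff}. Since $\theta^*\in\inter(\Theta_0)$, $\partial\Phi(\theta^*)$ is compact, so $h_{\partial\Phi(\theta^*)}$ is finite and continuous on $\R^d$. Fixing $\varepsilon>0$ and a radius $r>0$ with $B(\theta^*,r)\subseteq\Theta_0$, the function $G_n$ is convex on $B(0,r\rho_n)$, which contains $\bar B(0,\varepsilon)$ for all large $n$; Proposition~\ref{prop:non-smooth} and Lemma~\ref{lem:dirder_support} give $G_n(t)\to h_{\partial\Phi(\theta^*)}(t)$ in probability for every fixed $t$, and then Corollary~\ref{cor:Rockafellar_as} upgrades this to
\[\sup_{t\in \bar B(0,\varepsilon)}\bigl|G_n(t)-h_{\partial\Phi(\theta^*)}(t)\bigr|\xrightarrow[n\to\infty]{}0\quad\text{in probability.}\]

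The one new ingredient is that the rescaled feasible directions are trapped in the support cone. If $t\in T_n$, write $t=\rho_n(\vartheta-\theta^*)$ with $\vartheta\in\Theta$; for $\delta\in[0,1/\rho_n]$ we have $\theta^*+\delta t=\theta^*+(\delta\rho_n)(\vartheta-\theta^*)\in\Theta$ by convexity, so $t$ lies in the tangent cone to $\Theta$ at $\theta^*$, hence $T_n\subseteq C$. Because $C$ is a closed cone and $C\neq\{0\}$, the set $\{u\in C:\|u\|=1\}$ is compact and non-empty, so $\eta:=\min_{u\in C,\,\|u\|=1}h_{\partial\Phi(\theta^*)}(u)$ is attained and, by hypothesis, $\eta>0$. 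On the event $\{\sup_{\bar B(0,\varepsilon)}|G_n-h_{\partial\Phi(\theta^*)}|\le\varepsilon\eta/2\}$, which has probability tending to $1$, every $t\in T_n$ with $\|t\|=\varepsilon$ satisfies $t/\varepsilon\in C$, $\|t/\varepsilon\|=1$, and therefore
\[G_n(t)\ \ge\ h_{\partial\Phi(\theta^*)}(t)-\tfrac{\varepsilon\eta}{2}\ =\ \varepsilon\,h_{\partial\Phi(\theta^*)}(t/\varepsilon)-\tfrac{\varepsilon\eta}{2}\ \ge\ \tfrac{\varepsilon\eta}{2}\ >\ 0\ =\ G_n(0),\]
using positive homogeneity of $h_{\partial\Phi(\theta^*)}$. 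Lemma~\ref{lem:convexmin}, applied with ambient open convex set a ball $B(0,\varepsilon')\supseteq\bar B(0,\varepsilon)$ on which $G_n$ is defined, $G=T_n$, $K=\bar B(0,\varepsilon)$ and $t_0=0$, then forces $\|\hat t_n\|\le\varepsilon$ on that event. Hence $\PP(\|\hat t_n\|>\varepsilon)\to0$; as $\varepsilon>0$ was arbitrary, $\hat t_n\to0$ in probability, which completes the argument.

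I do not expect any serious obstacle: all the analytic estimates are exactly those already established for Theorem~\ref{thm:non-diff}. The only genuine content is the inclusion $T_n\subseteq C$, which is what makes positivity of $h_{\partial\Phi(\theta^*)}$ on $C\setminus\{0\}$ alone suffice in place of positivity on all of $\R^d\setminus\{0\}$; the minor points requiring care are verifying that $C\cap\partial B(0,1)$ is compact and non-empty (so that $\eta>0$) and dispatching the degenerate case $C=\{0\}$.
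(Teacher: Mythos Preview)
Your proof is correct and follows essentially the same approach as the paper's: both combine Proposition~\ref{prop:non-smooth} with Corollary~\ref{cor:Rockafellar_as} for uniform convergence, use that $h_{\partial\Phi(\theta^*)}$ attains a positive minimum on $C\cap\{\|t\|=1\}$, invoke Lemma~\ref{lem:convexmin} to localize $\hat t_n$, and finish with Lemma~\ref{lem:convergence_proba}. The only cosmetic difference is the choice of localizing set $K$: you use the Euclidean ball $\bar B(0,\varepsilon)$ together with the explicit inclusion $T_n\subseteq C$, whereas the paper uses the sublevel set $\{t\in C:h_{\partial\Phi(\theta^*)}(t)\le\varepsilon\}$ and concludes first that $h_{\partial\Phi(\theta^*)}(\hat\theta_n-\theta^*)=0$ before deducing $\hat\theta_n=\theta^*$; your version is arguably a bit cleaner in its application of Lemma~\ref{lem:convexmin}.
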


The assumption of the theorem is that the two closed, convex cones $C$ and $\{t\in\R^d:h_{\partial\Phi(\theta^*)}(t)\leq 0\}$ have a trivial intersection. Note that, by the first order condition at $\theta^*$, this intersection must always be included in the boundary of $C$. In other words, the assumption of the theorem is that all (non-zero) vectors in $C$ are directions of strict, linear increase of the population risk $\Phi$.

\begin{proof}

A consequence of the assumption of the theorem is that for all $\varepsilon>0$, $\{t\in C:h_{\partial\Phi(\theta^*)}(t)\leq \varepsilon\}$ is compact. Indeed, it is closed, since $C$ is closed and $h_{\partial\Phi(\theta^*)}$ is continuous. Moreover, the set $\{t\in C:\|t\|=1\}$ is compact, so by continuity of $h_{\partial\Phi(\theta^*)}$, there is some $t_0\in C$ with $\|t_0\|=1$ satisfying, for all $t\in C\setminus\{0\}$, $h_{\partial\Phi(\theta^*)}(t) \geq \|t\|h_{\partial\Phi(\theta^*)}(t_0)$. The assumption of the theorem implies that $h_{\partial\Phi(\theta^*)}(t_0)> 0$. Finally, $\{t\in C:h_{\partial\Phi(\theta^*)}(t)\leq\varepsilon\}$ is bounded, since it is included in $B(0,\varepsilon/h_{\partial\Phi(\theta^*)}(t_0))$. 

Now, let $(\rho_n)_{n\geq 1}$ be an arbitrary non-decreasing sequence of positive numbers, diverging to $\infty$ as $n\to\infty$ and fix $\varepsilon>0$. Proposition~\ref{prop:non-smooth}, Corollary~\ref{cor:Rockafellar_as} and Lemma~\ref{lem:dirder_support}, yield that 
$\sup_{t\in C:h_{\partial\Phi(\theta^*)}(t)\leq\varepsilon}|G_n(t)-h_{\partial\Phi(\theta^*)}(t)|\xrightarrow[n\to\infty]{}0$ in probability, where we set $G_n(t)=\rho_n(\Phi_n(\theta^*+t/\rho_n)-\Phi_n(\theta^*))$ as in the proof of Theorem~\ref{thm:non-diff}.
Let $n$ be large enough so $\sup_{t\in C:h_{\partial\Phi(\theta^*)}(t)\leq\varepsilon}|G_n(t)-h_{\partial\Phi(\theta^*)}(t)|\leq \varepsilon/2$ with probability at least $1-\varepsilon$. Then, with probability at least $1-\varepsilon$, it holds simultaneously for all $t\in T_n=\rho_n(\Theta-\theta^*)$ with $h_{\partial\Phi(\theta^*)}(t)=\varepsilon$, that
\begin{equation*}
	G_n(t) \geq h_{\partial\Phi(\theta^*)}(t)-\varepsilon/2 = \varepsilon/2   >0 = G_n(0)
\end{equation*}
so, by Lemma~\ref{lem:convexmin}, any minimizer $\hat t_n$ of $G_n$ on $T_n$ satisfies $h_{\partial\Phi(\theta^*)}(\hat t_n)\leq \varepsilon$. In particular, we obtain, for all large enough $n$, that with probability at least $1-\varepsilon$, 
$$0\leq h_{\partial\Phi(\theta^*)}(\rho_n(\hat \theta_n-\theta^*))= \rho_n h_{\partial\Phi(\theta^*)}(\hat \theta_n-\theta^*)\leq \varepsilon$$
where the first inequality follows from the first order condition for $\Phi$ at $\theta^*$ (Lemma~\ref{lem:FOC_gen}). That is $\rho_n h_{\partial\Phi(\theta^*)}(\hat \theta_n-\theta^*)\xrightarrow[n\to\infty]{}0$. Since the sequence $(\rho_n)_{n\geq 1}$ was arbitrary, Lemma~\ref{lem:convergence_proba} yields that $h_{\partial\Phi(\theta^*)}(\hat \theta_n-\theta^*)=0$ with probability going to $1$ as $n\to\infty$. Since $\hat\theta_n-\theta^*\in C$, this means that $ \hat \theta_n-\theta^*=0$ with probability going to $1$ as $n\to\infty$, which is the desired statement. 
\end{proof}

\begin{remark}
	Results of this section rely on Proposition~\ref{prop:non-smooth}, which imposes square-integrability of the loss function. We do not know whether the same results could be proved under weaker assumptions.
\end{remark}

Now, to obtain a more precise asymptotic description of $\hat\theta_n$ when $\Phi$ is differentiable at $\theta^*$ (this could be the case in Theorem~\ref{thm:First_order_asymp}, with $\nabla\Phi(\theta^*)^\top t>0$ for all $t\in C\setminus\{0\}$, but not in Theorem~\ref{thm:non-diff}), we will assume the existence of second order derivatives for $\Phi$ at $\theta^*$. This is the object of the next section. 

\subsection{Differentiable case} \label{sec:diff_case}

Let us first state the main result of this section. 

\begin{theorem} \label{thm:asympt_norm}
	Let $g:E\times\Theta_0\to\R^d$ be a measurable selection of subgradients of $\phi$. 
	Assume the following:
\begin{itemize}
	\item[(i)] $\Phi$ is twice differentiable at $\theta^*$ and $S:=\nabla^2\Phi(\theta^*)$ is positive definite;
	\item[(ii)] $g(\cdot,\theta^*)\in L^2(P)$;
	\item[(iii)] $\pi_{\Theta-\theta^*}^S$ has directional derivatives at $-S^{-1}\nabla\Phi(\theta^*)$.
\end{itemize}

Then, 
$$\sqrt n(\hat\theta_n-\theta^*)\xrightarrow[n\to\infty]{} \diff^+\pi_{\Theta-\theta^*}^{S}(-S^{-1}\nabla\Phi(\theta^*);Z)$$
in distribution, where $Z\sim \mathcal N_d(0,S^{-1}BS^{-1})$ and $B=\var(g(X_1,\theta^*))$.
\end{theorem}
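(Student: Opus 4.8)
The plan is to pass to the rescaled process and reduce the problem to a perturbation of a metric projection, in the spirit of the proofs of Theorems~\ref{thm:non-diff} and~\ref{thm:First_order_asymp}. Put $\hat t_n=\sqrt n(\hat\theta_n-\theta^*)$ and $T_n=\sqrt n(\Theta-\theta^*)$, and for $t$ with $\theta^*+t/\sqrt n\in\Theta_0$ set $G_n(t)=n\bigl(\Phi_n(\theta^*+t/\sqrt n)-\Phi_n(\theta^*)\bigr)$, a random convex function; then $\hat t_n$ is a minimizer of $G_n$ over $T_n$ for all large $n$, with probability one. Let $W_n=n^{-1/2}\sum_{i=1}^n\bigl(g(X_i,\theta^*)-\nabla\Phi(\theta^*)\bigr)$, so that, by the central limit theorem and (ii), $W_n\to Z_0\sim\mathcal N_d(0,B)$ in distribution. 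Using convexity as in the proof of Proposition~\ref{prop:non-smooth} --- the sandwich $0\le\phi(X_i,\theta^*+t/\sqrt n)-\phi(X_i,\theta^*)-n^{-1/2}g(X_i,\theta^*)^\top t\le n^{-1/2}\bigl(g(X_i,\theta^*+t/\sqrt n)-g(X_i,\theta^*)\bigr)^\top t$, monotonicity of subgradients (Lemma~\ref{lem:monoton_subgrad}), and the fact that, by (i) and Lemma~\ref{lem:diff_subgrad2}, $\phi(X_1,\cdot)$ is almost surely differentiable at $\theta^*$ with gradient $g(X_1,\theta^*)$ --- one shows that the centered convexity gap has vanishing variance; combined with the second-order Taylor expansion of $\Phi$ at $\theta^*$ from (i), this gives, for each fixed $t$,
\[
G_n(t)=\sqrt n\,\nabla\Phi(\theta^*)^\top t+W_n^\top t+\tfrac12 t^\top S t+o_P(1).
\]
Subtracting the linear term, $H_n(t):=G_n(t)-\sqrt n\,\nabla\Phi(\theta^*)^\top t$ is still a random convex function, and $H_n(t)-W_n^\top t\to\tfrac12 t^\top S t$ in probability for each fixed $t$.

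Next I would identify the candidate limit via a metric projection. Set $\theta_0:=-S^{-1}\nabla\Phi(\theta^*)$ and $u_n:=-S^{-1}\bigl(\sqrt n\,\nabla\Phi(\theta^*)+W_n\bigr)$, so that $G_n(t)=\tfrac12\|t-u_n\|_S^2-\tfrac12\|u_n\|_S^2+R_n(t)$ with $R_n(t)=H_n(t)-W_n^\top t-\tfrac12 t^\top S t$. The first-order optimality condition at $\theta^*$ (Lemma~\ref{lem:FOC_gen}), namely $\nabla\Phi(\theta^*)^\top(\theta-\theta^*)\ge 0$ for all $\theta\in\Theta$, together with the variational characterization of the metric projection, gives $\pi_{\Theta-\theta^*}^S(\theta_0)=0$. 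Since the metric projection onto a convex set commutes with positive scalings, $p_n:=\pi_{T_n}^S(u_n)=\sqrt n\,\pi_{\Theta-\theta^*}^S(u_n/\sqrt n)=\sqrt n\bigl(\pi_{\Theta-\theta^*}^S(\theta_0-n^{-1/2}S^{-1}W_n)-\pi_{\Theta-\theta^*}^S(\theta_0)\bigr)$, which is a directional-derivative difference quotient at $\theta_0$ along the random direction $-S^{-1}W_n$. Because $\pi_{\Theta-\theta^*}^S$ is nonexpansive, assumption (iii) upgrades to Hadamard directional differentiability at $\theta_0$ (Section~\ref{sec:appendix_diff_proj}); hence the functional delta method and $-S^{-1}W_n\to Z:=-S^{-1}Z_0\sim\mathcal N_d(0,S^{-1}BS^{-1})$ in distribution yield $p_n\to\diff^+\pi_{\Theta-\theta^*}^S(\theta_0;Z)$ in distribution, which is the announced limit.

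It remains to show $\hat t_n-p_n\to 0$ in probability; Slutsky's lemma then finishes. Since $H_n-W_n^\top(\cdot)$ is a random convex function converging pointwise in probability to the deterministic convex function $t\mapsto\tfrac12 t^\top S t$, the in-probability part of Corollary~\ref{cor:Rockafellar_as}, applied on the fixed ball $\inter B_S(0,M+1)$ (which is contained in $\sqrt n(\Theta_0-\theta^*)$ for all large $n$), yields $\sup_{t\in B_S(0,M)}|R_n(t)|\to 0$ in probability for every $M>0$. As $p_n$ is bounded in probability, fix $M$ and $\varepsilon>0$ with $\PP(\|p_n\|_S\le M-\varepsilon)\ge 1-\varepsilon$ for all large $n$. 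By strong convexity of $t\mapsto\tfrac12\|t-u_n\|_S^2$ and the variational inequality for $p_n=\pi_{T_n}^S(u_n)$, for all $t\in T_n$ one has $\tfrac12\|t-u_n\|_S^2\ge\tfrac12\|p_n-u_n\|_S^2+\tfrac12\|t-p_n\|_S^2$, hence $G_n(t)-G_n(p_n)\ge\tfrac12\|t-p_n\|_S^2+R_n(t)-R_n(p_n)$. Thus on the event $\{\sup_{B_S(0,M)}|R_n|\le\varepsilon^2/8\}\cap\{\|p_n\|_S\le M-\varepsilon\}$, every $t\in T_n$ with $\|t-p_n\|_S=\varepsilon$ satisfies $G_n(t)\ge G_n(p_n)+\varepsilon^2/4>G_n(p_n)$; applying Lemma~\ref{lem:convexmin} to the convex function $G_n$ on $T_n$, with the compact convex set $K=\{t:\|t-p_n\|_S\le\varepsilon\}$ and the point $p_n\in K\cap T_n$, forces $\hat t_n\in K$, i.e.\ $\|\hat t_n-p_n\|_S\le\varepsilon$. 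Since $M$ can be taken arbitrarily large and $\varepsilon$ arbitrarily small, $\hat t_n-p_n\to 0$ in probability.

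I expect the main obstacle to be the handling of the diverging linear term $\sqrt n\,\nabla\Phi(\theta^*)^\top t$, which is present precisely because the constraint is active at $\theta^*$ and which prevents $G_n$, as well as the constraint sets $T_n$, from converging; consequently one cannot simply invoke an argmin continuous-mapping theorem. The resolution above is to compare $\hat t_n$ directly with $p_n=\pi_{T_n}^S(u_n)$ and to note that, although $G_n$ itself does not converge, the remainder $R_n$ is the difference between a random convex function and a deterministic quadratic, the former converging to the latter --- exactly the setting covered by Corollary~\ref{cor:Rockafellar_as}. A secondary technical point is the passage from the Gateaux-type directional differentiability assumed in (iii) to the Hadamard-type differentiability needed to run the delta method, which rests on the nonexpansiveness of metric projections onto convex sets.
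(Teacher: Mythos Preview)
Your proposal is correct and follows essentially the same route as the paper's proof: the same quadratic expansion of $G_n$ via the convexity sandwich and monotone-convergence argument, the same identification of the quadratic minimizer as $p_n=\pi_{T_n}^S(u_n)$ (the paper's $t_n^*$), and the same Lemma~\ref{lem:convexmin}-based localization to show $\hat t_n-p_n\to 0$ in probability. The only notable difference is that you invoke Hadamard differentiability (via nonexpansiveness) and the delta method to obtain the limit of $p_n$, whereas the paper uses Skorohod representation to reduce to almost-sure convergence and then applies (iii) directly; both are valid and of comparable length.
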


\begin{remark}[on the assumptions of the theorem]
\, 
\begin{itemize}

	\item[(i)] Second differentiability of $\Phi$ at $\theta^*$ is not a strong restriction, since all convex functions are twice differentiable almost eveywhere in the interior of their domains \cite{aleksandorov1939almost}.  The assumption that $\nabla^2\Phi(\theta^*)$ is definite positive is made in order to obtain $n^{-1/2}$ convergence rate. This assumption could be relaxed, yielding slower rates under further, technical assumptions on higher order derivatives on $\Phi$. In this work, we choose to focus on the $n^{-1/2}$ rate because it only requires minimal, easy to check, non-restrictive smoothness assumptions. 
	
	\item[(ii)] Existence of the map $g$ is guaranteed by Theorem~\ref{thm:measurable_subgradient}. Moreover, the first assumption on $\Phi$ implies that it is differentiable at $\theta^*$, so by Lemma~\ref{lem:diff_subgrad2}, $\phi(X_1,\cdot)$ is almost surely differentiable at $\theta^*$ yielding that $g(x,\theta^*)=\nabla\left(\phi(x,\cdot)\right)(\theta^*)$ for $P$-almost all $x\in E$.  Theorem~\ref{thm:measurable_subgradient} also ensures that it is sufficient that $\phi(\cdot,\theta)\in L^2(P)$ for all $\theta\in \Theta_0$ for the second assumption to hold. In fact, a straightforward adaptation of Theorem~\ref{thm:measurable_subgradient} shows that it is even enough to only assume that $\phi(\cdot,\theta)\in L^2(P)$ for all $\theta$ in any arbitrarily small neighborhood of $\theta^*$. Note that this does not require a uniform domination of $\phi$ or its derivatives/subgradients in any neighborhood of $\theta^*$ but, rather, a pointwise integrability condition of order $0$ (that is, on $\phi$ itself).

	\item[(iii-a)] Directional differentiability of $\pi_{\Theta-\theta^*}^S$ is not a strong restriction in the sense that, $\pi_{\Theta-\theta^*}^S$ being non-expansive (see Lemma~\ref{lem:charact_proj}) it is automatically differentiable almost everywhere by Rademacher's theorem \cite[Section 3.1.6, p. 216]{federer1969geometric}. In the appendix (Section~\ref{sec:appendix_diff_proj}), we present several sufficient conditions that guarantee the existence of directional derivatives of $\pi_K^S$ for a convex set $K$, at a direction $u$, which, in practice, are easily checked (e.g., $u\in K$, or $u\notin K$ and $\partial K$ is smooth at $\pi_K(u)$, or $K$ is defined by finitely many linear convex constraints, etc.). By an obvious linear change of variables, it is clear that the existence of a directional derivative of $\pi_{\Theta-\theta^*}^S$ at $-S^{-1}\nabla\Phi(\theta^*)$ in a direction $z\in\R^d$ is equivalent to the existence of a directional derivative of $\pi_{S^{1/2}(\Theta-\theta^*)}$ at $-S^{-1/2}\nabla\Phi(\theta^*)$ in the direction $S^{1/2}z$. Then, simple algebra yields that 
$$\diff^+\pi_{\Theta-\theta^*}^{S}(-S^{-1}\nabla\Phi(\theta^*);z)=S^{-1/2}\diff^+\pi_{S^{1/2}(\Theta-\theta^*)}(-S^{-1/2}\nabla\Phi(\theta^*);S^{1/2}z).$$
Recall that $(\theta-\theta^*)^\top \nabla\Phi(\theta^*)\geq 0$ for all $\theta\in\Theta$: This is granted by the first order condition at $\theta^*$ (Lemma~\ref{lem:FOC_gen}). That is, $-\nabla\Phi(\theta^*)$ is in the normal cone to $\Theta$ at $\theta^*$ or, equivalently, $-S^{-1/2}\nabla\Phi(\theta^*)$ is in the normal cone to $S^{1/2}(\Theta-\theta^*)$ at $0$. 
\end{itemize}
\end{remark}

\begin{remark}[on the conclusion of the theorem]
\,
\begin{itemize}
	 \item Lemma~\ref{lem:diff_proj_sanity} yields that for any $z\in\R^d$, $\diff^+\pi_{\Theta-\theta^*}^S(-S^{-1}\nabla\Phi(\theta^*);z)\in C_{S^{-1}\nabla\Phi(\theta^*)}^S=C_{\nabla\Phi(\theta^*)}$ where $C$ is the support cone to $\Theta$ at $\theta^*$. When $\nabla\Phi(\theta^*)^\top t>0$ for all $t\in C\setminus\{0\}$ (that is, $-\nabla\Phi(\theta^*)$ is in the interior of the normal cone to $\Theta$ at $\theta^*$), $C_{\nabla\Phi(\theta^*)}=\{0\}$, $\diff^+\pi_{\Theta-\theta^*}^S(-S^{-1}\nabla\Phi(\theta^*);\cdot)=0$ so Theorem~\ref{thm:asympt_norm} yields that $\sqrt n(\hat\theta_n-\theta^*)\xrightarrow[n\to\infty]{}0$ in distribution: This was already a (rather weak) consequence of Theorem~\ref{thm:First_order_asymp}.

	\item If $\theta^*\in\inter(\Theta)$, then the first order condition (Lemma~\ref{lem:FOC_gen}) yields that $\nabla\Phi(\theta^*)=0$ and, $\diff^+\pi_{\Theta-\theta^*}^S(0;\cdot)$ is simply the identity map. Therefore, Theorem~\ref{thm:asympt_norm} says that $\sqrt n(\hat\theta_n-\theta^*)\xrightarrow[n\to\infty]{}Z$ in distribution. In that case, Theorem~\ref{thm:consist} implies that, with probability $1$, for all large enough $n$, $\hat\theta_n\in\inter(\Theta)$. Hence, with probability $1$, for all large enough $n$, $\hat\theta_n$ (the constrained $M$-estimator) is also a solution to the unconstrained optimization problem $\min_{\theta\in\Theta_0}\Phi_n(\theta)$, and we recover Haberman's theorem \cite[Theorem 6.1]{haberman1989concavity}.
	
	\item In fact, Theorem~\ref{thm:asympt_norm} also encompasses the unconstrained case, by taking $\Theta=\Theta_0=\R^d$. If $\Theta_0$ is a strict open subset of $\R^d$, one can also consider an unconstrained $M$-estimator $\tilde\theta_n$ on the open set $\Theta_0$, that is, a minimizer of $\Phi_n$ on $\Theta_0$. Assume that $\theta^*$ is the unique minimizer of $\Phi$ on the open set $\Theta_0$ and let $\Theta$ be any closed subset of $\Theta_0$ containing $\theta^*$ in its interior (e.g., take $\Theta=B(\theta^*,\varepsilon)$ for any small enough $\varepsilon$). Then, a straight adaptation of Theorem~\ref{thm:consist} yields that $\tilde\theta_n\xrightarrow[n\to\infty]{}\theta^*$ almost surely, so $\tilde\theta_n\in \Theta$ for all large enough $n$, with probability $1$. That is, $\tilde\theta_n$ eventually coincides with a constrained $M$-estimator and, hence, also satisfies the conclusion of Theorem~\ref{thm:asympt_norm}, with $\diff^+\pi_{\Theta-\theta^*}^S(0;\cdot)$ being the identity map (note that in the case $\Theta=\Theta_0=\R^d$, we necessarily have that $\nabla\Phi(\theta^*)=0$).
	
	\item If the boundary of $\Theta$ is $C^2$ in a neighborhood of $\theta^*$ (that is, it can be locally represented as the graph of a $C^2$ mapping from $\R^{d-1}$ to $\R$) and $\nabla\Phi(\theta^*)\neq 0$, then, Lemma~\ref{lem:diffprojCk} yields that $\sqrt n(\hat\theta_n-\theta^*)$ converges in distribution to a Gaussian distribution that is supported in the linear hyperplane that is parallel to the (unique) supporting hyperplane to $\Theta$ at $\theta^*$. 
	
		\item Lemmas~\ref{lem:monotonicity_dirder} and \ref{lem:monotonicity_dirder2} imply that for all $t,t'\geq 0$ with $t'>t$, 
\begin{equation} \label{eqn:misspec}
	\|\diff^+\pi_{\Theta-\theta^*}^S(-t'S^{-1}\nabla\Phi(\theta^*);Z)\|_S\leq \|\diff^+\pi_{\Theta-\theta^*}^S(-tS^{-1}\nabla\Phi(\theta^*);Z)\|_S
\end{equation}
almost surely. This can be interpreted as follows. First, note that the set $\Theta$ can represent some constraints that are imposed by a specific application, or it can represent a model (e.g., if it is believed that the global minimizer of $\Phi$ lies in $\Theta$). In the latter case, the model is misspecified if the global minimizer of $\Phi$ is not in $\Theta$, that is, if $\nabla\Phi(\theta^*)\neq 0$. In other words, the vector $\nabla\Phi(\theta^*)$ (or its rescaled version $S^{-1}\nabla\Phi(\theta^*)$ can be used to quantify the amount of model misspecification. In that regard, \eqref{eqn:misspec} suggests that more misspecification yields better asymptotic error (we do not account for any misspecification bias here). In \eqref{eqn:misspec}, $t=0$ can be thought of as corresponding to the well-specified case. This will be illustrated in the examples below.
	
	\item As a consequence of Theorem~\ref{thm:asympt_norm}, the mean squared error of $\hat\theta_n$ satisfies
\begin{equation} \label{eq:MSE}\liminf_{n\to\infty}n\E[\|\hat\theta_n-\theta^*\|_S^2]\geq \E[\|\diff^+\pi_{\Theta-\theta^*}^S(-S^{-1}\nabla\Phi(\theta^*);Z)\|_S^2]
\end{equation}
(we do not know, in general, whether this is in fact an equality, with the $\liminf$ being a simple limit, see the open question below). The right hand side can be interpreted as a local measure of the statistical complexity of $\Theta$ around $\theta^*$, relative to the (population) loss function $\Phi$. The statistical dimension (or Gaussian width) of a non-empty, closed, convex set $G\subseteq\R^d$ is measured as $\E[\|\pi_G(Z)\|^2]$ where $Z\sim\mathcal N_d(0,I_d)$, see \cite{amelunxen2014living} (in our case, we need to account for a scaling given by $S^{-1}$ and $B$ in the covariance matrix of $Z$). In \eqref{eq:MSE}, we do not have a projection, but the directional derivative of a projection. The right hand side of \eqref{eq:MSE} can rather be seen as a statistical dimension at an infinitesimal scale. We can refer, for instance, to \cite{chatterjee2014new} who studied least squares under convex constraint, and proved that the statistical dimension at a fixed scale drives the statistical error. A similar phenomenon has also been studied for constrained $M$-estimators in a more general setup \cite{li2015geometric}. Recall, however, that except in specific cases (see Section~\ref{sec:appendix_diff_proj} in the appendix), $\diff^+\pi_{\Theta-\theta^*}^S(-S^{-1}\nabla\Phi(\theta^*);\cdot)$ is not the projection onto a convex set. 

	\item It is worth mentioning some further important properties of $\Pi:=\diff^+\pi_{\Theta-\theta^*}^S(-S^{-1}\nabla\Phi(\theta^*);\cdot)$. As we have noted above, in general, it is not the projection onto a convex cone. Nevertheless, it shares similar properties as the projection onto a convex cone. Indeed, by Lemma~\ref{lem:diff_proj_prop}, it satisfies the following properties:
\begin{itemize}
	\item $\Pi(\lambda z)=\lambda\Pi(z)$, for all $\lambda\geq 0$ and $z\in\R^d$ (positive homogeneity);
	\item $\|\Pi(z')-\Pi(z)\|_S\leq \|z'-z\|_S^2$ (non-expansiveness);
	\item $\langle\Pi(z')-\Pi(z),z'-z\rangle_S\geq \|\Pi(z')-\Pi(z)\|_S^2\geq 0$ for all $z,z'\in\R^d$ (firm monotonicity). 
\end{itemize}
Note that non-expansiveness is implied by firm monotonicity. Such maps satisfying the last two properties above have been studied extensively \cite{ZARANTONELLO1971237}. Moreover, \cite[Proposition 2.1]{noll1995directional} implies that $\Pi$ is the gradient of a convex function. 

\end{itemize}

\end{remark}


Now, let us look at some applications of Theorem~\ref{thm:asympt_norm}.

\begin{example}[Constrained mean estimation]
	Let $X_1,X_2,\ldots$ be iid random vectors with two moments\footnote{In fact, one moment is enough if one rather uses the loss function $\phi(x,\theta)=\|x-\theta\|^2-\|x\|^2$, $x,\theta\in\R^d$} and $\Theta\subseteq \R^d$ be a non-empty, closed, convex set. Consider the loss function $\phi(x,\theta)=(1/2)\|x-\theta\|^2, x,\theta\in \R^d$. Then, $\theta^*=\pi_{\Theta}(\E[X_1])$ is the unique minimizer of $\Phi$ on $\Theta$ and $\hat\theta_n=\pi_{\Theta}(\bar X_n)$ where $\bar X_n=n^{-1}(X_1+\ldots+X_n)$, for all $n\geq 1$. Consistency, which is a consequence of Theorem~\ref{thm:consist}, also follows directly from the strong law of large numbers, together with continuity of $\pi_\Theta$ (since it is non-expansive). For asymptotic normality, we obtain, from Theorem~\ref{thm:asympt_norm}, that 
	$$\sqrt n(\hat\theta_n-\theta^*)\xrightarrow[n\to\infty]{} \diff^+\pi_{\Theta-\theta^*}(\E[X_1]-\theta^*;Z)=\diff^+\pi_{\Theta}(\E[X_1];Z)$$
in distribution, where $Z\sim\mathcal N_d(0,\var(X_1))$ (in this example, $S=I_d$).
In this simple case, this result can also be obtained using the central limit theorem, combined with the delta method.\footnote{Delta method requires Hadamard directional differentiability of $\pi_{\Theta-\theta^*}$ at $\E[X_1]-\theta^*$. This is readily implied by the existence of directional derivatives together with non-expansiveness of $\pi_{\Theta-\theta^*}$}

Here, it is clear that misspecification is favorable for the asymptotic error: For instance, if $\Theta-\theta^*$ is a convex cone and $\E[X_1]-\theta^*$ is in the interior of the normal cone to $\Theta$ at $\theta^*$ (in particular, $\theta^*\neq \E[X_1]$), then, Theorem~\ref{thm:non-diff} yields that $\hat\theta_n=\theta^*$ with probability going to $1$ as $n\to\infty$. 
\end{example}

\begin{example}[Constrained least squares]
Let $(X_1,Y_1),(X_2,Y_2),\ldots$ be iid random pairs in $\R^d\times\R$. Assume that $X_1$ has four moments, $\E[X_1]=0$, $S:=\E[X_1X_1^\top]$ is definite positive, $Y_1-X_1^\top\theta_0$ is independent of $X_1$ and has the centered Gaussian distribution with variance $\sigma^2>0$ for some $\theta_0\in\R^d$ and $\sigma^2>0$. Let $\phi(x,y,\theta)=1/2(y-x^\top\theta)^2$, for all $x\in\R^d, y\in\R$ and $\theta\in\R^d$. Then, for all $\theta\in\R^d$, 
$$\Phi(\theta)=\frac{1}{2}\|\theta-\theta_0\|_S^2+\sigma^2.$$
Let $\Theta\subseteq\R^d$ be a non-empty, closed, convex subset of $\R^d$ (here, $\Theta_0=\R^d$). Then, $\Argmin_{\theta\in\Theta}\Phi(\theta)=\{\pi_\Theta^S(\theta_0)\}$ and, provided that $\pi_\Theta$ has directional derivatives at $\theta_0$, the least square estimator $\hat\theta_n$, defined as any minimizer on $\Theta$ of $\Phi_n(\theta)=n^{-1}\sum_{i=1}^n (Y_i-X_i^\top\theta)^2, \theta\in\R^d$, satisfies 
$$\sqrt n(\hat\theta_n-\theta^*)\xrightarrow[n\to\infty]{}\diff^+\pi_{\Theta-\theta^*}^S(\theta_0-\theta^*;Z)=\diff^+\pi_{\Theta}^S(\theta_0;Z)$$
in distribution, where $Z\sim\mathcal N_d(0,S^{-1}BS^{-1})$ and 
\begin{align*}
	B & = \var((Y_1-X_1^\top\theta^*)X_1) \\
	& =\var((Y_1-X_1^\top\theta_0)X_1+X_1^\top(\theta^*-\theta_0)X_1) \\
	& = \E[(X_1^\top(\theta_0-\theta^*))^2X_1X_1^\top]+\sigma^2 S.
\end{align*}


\end{example}

\begin{example}[Geometric median]
	Let $X_1,X_2,\ldots$ be iid random vectors with one moment.\footnote{Similarly to the first example, one need not assume the existence of one moment if the loss function is replaced with $\phi(x,\theta)=\|x-\theta\|-\|x\|$, $x,\theta\in\R^d$.} Consider the loss function $\phi(x,\theta)=\|x-\theta\|$, $x,\theta\in\R^d$. Then, $\theta^*$ is any geometric median and $\hat\theta_n$ is any empirical geometric median. Here, in the unconstrained case, we recover standard results for geometric median $M$-estimation, provided that the distribution of $X_1$ is not supported on an affine line (this guarantees uniqueness of $\theta^*$) and that $1/\|X_1-\theta^*\|$ is integrable (this guarantees that $\Phi$ is twice differentiable at $\theta^*$ with positive definite Hessian), see, e.g., \cite{koltchinskii1994bahadur}.
\end{example}

\begin{proof}[Proof of Theorem~\ref{thm:asympt_norm}]

Recall that we denote by $S=\nabla^2\Phi(\theta^*)$, which is a symmetric, positive definite matrix, by assumption.

First, since $\Theta_0$ is open, there exists some $r>0$ such that $B_S(\theta^*,r)\subseteq \Theta_0$. Fix some $R>0$, whose value will be determined later, and let $n\geq 1$ be any integer that is large enough so $R/\sqrt n\leq r$. For all such integers $n$, let $F_n$ be the random function defined on $B_S(0,R)$ by 
\begin{equation} \label{eq:Fn}
F_n(t)=n\left(\Phi_n(\theta^*+t/\sqrt n)-\Phi_n(\theta^*)\right)-\left(\frac{t^\top}{\sqrt n}\sum_{i=1}^n g(X_i,\theta^*)+\frac{1}{2}t^\top \nabla^2\Phi(\theta^*)t\right)\end{equation}
for all $t\in B_S(0,R)$. This is a random convex function. Our first goal is to prove that $F_n$ converges pointwise (and hence, by Corollary~\ref{cor:Rockafellar_as}, uniformly on the compact set $B_S(0,R)$) to zero in probability. From this, we will then obtain that any minimizer of the first term (one of which is given by $\sqrt n(\hat\theta_n-\theta^*)$ for large enough $n$, with probability $1$) is close to the unique minimizer of the second, quadratic term.

Fix $t\in B_S(0,R)$ and $n\geq 1$. For $i=1,\ldots,n$, let $Z_{i,n}=\phi(X_i,\theta^*+n^{-1/2}t)-\phi(X_i,\theta^*)-n^{-1/2}t^\top g(X_i,\theta^*)$. 
By definition of subgradients, 
$$0\leq Z_{i,n}\leq n^{-1/2}t^\top (g(X_i,\theta^*+n^{-1/2}t)-g(X_i,\theta^*)).$$
Squaring and taking the expectation yields that 
\begin{equation} \label{proof:4.2}
	\E[Z_{i,n}^2]\leq n^{-1}\E\left[\left(t^\top (g(X_1,\theta^*+n^{-1/2}t)-g(X_1,\theta^*))\right)^2\right]
\end{equation}
(we replaced $i$ with $1$ in the right hand side because the $X_i$'s are iid). Let $Y_n:=t^\top (g(X_1,\theta^*+n^{-1/2}t)-g(X_1,\theta^*))$. As mentioned above, $Y_n\geq 0$. Moreover, for $n\geq 1$, letting $u=\theta^*+t/\sqrt n$ and $v=\theta^*+t/\sqrt{n+1}$,
\begin{align*}
	Y_n-Y_{n+1} & = t^\top \left(g(X_1,u)-g(X_1,v)\right) \\
	& = (1/\sqrt n-1/\sqrt{n+1})^{-1}(u-v)^\top \left(g(X_1,u)-g(X_1,v)\right) \\
	& \geq 0
\end{align*}
by Lemma~\ref{lem:monoton_subgrad}. So the sequence $(Y_n)_{n\geq 1}$ is non-increasing. Hence, $Y_n$ converges almost surely to some non-negative random variable $Y$. By monotone convergence (noting that $Y_1$ is integrable), this implies that 
\begin{equation}	 \label{proof:4.1}
\E[Y_n]\xrightarrow[n\to\infty]{} \E[Y].
\end{equation} 
However, for all $n\geq 1$, $\E[Y_n]=t^{\top}\left(w_n-\nabla\Phi(\theta^*)\right)$ where $w_n\in\partial \Phi(\theta^*+t/\sqrt n)$, by Lemma~\ref{lem:diff_subgrad}. Lemma~\ref{lem:seq_subgrad} yielding that $w_n\xrightarrow[n\to\infty]{} w$, we obtain that $\E[Y_n]\xrightarrow[n\to\infty]{} 0$. 
Together with \eqref{proof:4.1}, this shows that $\E[Y]=0$ and, hence, because $Y\geq 0$, that $Y=0$ almost surely. Therefore, again by monotone convergence (noting, this time, that $Y_1^2$ is iontegrable), $\E[Y_n^2]\xrightarrow[n\to\infty]{} \E[Y^2]=0$. 

Combined with \eqref{proof:4.2} and using independence of $Z_{1,n},\ldots,Z_{n,n}$, we obtain that 
\begin{equation} \label{eqn:convL2}\var\left(\sum_{i=1}^n Z_{i,n}\right) = \sum_{i=1}^n \var(Z_{i,n}) \leq \sum_{i=1}^n \E[Z_{i,n}^2] \leq \E[Y_n^2] \xrightarrow[n\to\infty]{} 0.
\end{equation} 
Therefore, by Chebychev's inequality, $\sum_{i=1}^n (Z_{i,n}-\E[Z_{i,n}])\xrightarrow[n\to\infty]{} 0$ in probability, that is, 
$$n\Big(\Phi_n(\theta^*+n^{-1/2}t)-\Phi_n(\theta^*)\Big)-n^{-1/2}t^\top \sum_{i=1}^n g(X_i,\theta^*)-n\Big(\Phi(\theta^*+n^{-1/2}t)-\Phi(\theta^*)-n^{-1/2}t^\top \nabla\Phi(\theta^*)\Big) \xrightarrow[n\to\infty]{} 0$$ 
in probability. Now, since we have assumed that $\Phi$ is twice differentiable at $\theta^*$, we finally obtain that 
\begin{equation} \label{eqn:pointwisePconv}
	F_n(t)\xrightarrow[n\to\infty]{} 0
\end{equation} 
in probability, for all $t\in B_S(0,R)$, as desired.

For all integers $n\geq 1$, let $T_n=\{t\in\R^d:\theta^*+n^{-1/2}t\in\Theta\}=n^{1/2}(\Theta-\theta^*)\subseteq T$ and $S_n=\{t\in\R^d:\theta^*+n^{-1/2}t\in\Theta_0\}=n^{1/2}(\Theta_0-\theta^*)$. Then, $T_n$ is a closed subset of $S_n$. Moreover, since $\theta^*\in\Theta_0$ and $\Theta_0$ is open, $B_S(0,R)\subseteq S_n$ for all large enough integers $n$ (recall that $R>0$ is some fixed number, whose value is still to be determined). Define the maps 
$$\hat G_n:t\in S_n\mapsto n\Big(\Phi_n(\theta^*+n^{-1/2}t)-\Phi_n(\theta^*)\Big)$$ and 
$$G_n : t\in\R^d \mapsto \mbox{ } n^{-1/2}t^\top\sum_{i=1}^n g(X_i,\theta^*)+\frac{1}{2}t^\top \nabla^2\Phi(\theta^*)t.$$

As per these definitions, $F_n=\hat G_n-G_n$, so, \eqref{eqn:pointwisePconv} and  Corollary~\ref{cor:Rockafellar_as} yield that 
\begin{equation} \label{eqn:Unif_Conv}
	\sup_{t\in B_S(0,R)}|\hat G_n(t)-G_n(t)|\xrightarrow[n\to\infty]{} 0
\end{equation}
in probability.

Moreover, $\hat t_n:=n^{1/2}(\hat \theta_n-\theta^*)$ is a minimizer of $\hat G_n$ on $T_n$, by definition of the empirical risk minimizer $\hat\theta_n$.

Now, denote by $Z_n=n^{-1/2}S^{-1}\sum_{i=1}^n g(X_i,\theta^*)-\nabla\Phi(\theta^*)$ and for all $t\in\R^d$, rewrite $G_n(t)$ as 

\begin{align*}
G_n(t) & = n^{-1/2}t^\top\sum_{i=1}^n g(X_i,\theta^*)+\frac{1}{2}t^\top \nabla^2\Phi(\theta^*)t \\
& = \langle n^{-1/2}S^{-1}\sum_{i=1}^n g(X_i,\theta^*),t\rangle_S+\frac{1}{2}\|t\|_S^2 \\
& = \langle Z_n+\sqrt n S^{-1}\nabla\Phi(\theta^*),t\rangle _S +\frac{1}{2}\|t\|_S^2 \\
& = \frac{1}{2}\|t+Z_n+\sqrt n S^{-1}\nabla\Phi(\theta^*)\|_S^2-\|Z_n+\sqrt n S^{-1}\nabla\Phi(\theta^*)\|_S^2.
\end{align*}

It is now clear that $G_n$ has a unique minimizer on $T_n$, which we denote by $t_n^*$ and which is given by 
$$t_n^*=\pi_{T_n}^S(-Z_n-\sqrt n S^{-1}\nabla\Phi(\theta^*)).$$

Now, our goal is twofold. First, to study the asymptotic behavior of $t_n^*$ and show that it converges in distribution, as $n\to\infty$. Second, to check, based on \eqref{eqn:Unif_Conv}, that $\hat t_n$ approaches $t_n^*$ as $n\to\infty$, that is, $\hat t_n-t_n^*$ converges in probability to $0$. Using Slutsky's theorem, these two facts will imply convergence in distribution of $\hat t_n$.

\paragraph{\underline{Asymptotic behavior of $t_n^*$}.} \,

First, by the central limit theorem, we have that $Z_n\xrightarrow[n\to\infty]{} Z$ in distribution, where $Z$ is is a centered Gaussian random variable with covariance matrix given by $S^{-1}\var(g(X_1,\theta^*))S^{-1}$. 

By Skorohod representation theorem (see \cite[Theorem 5.31]{kallenberg1997foundations} for instance), one may assume that $Z_n$ converges almost surely to $Z$. Since $\pi_C^S$ is non-expansive by Lemma~\ref{lem:charact_proj}, it holds that $t_n^*-\pi_{T_n}^S(-Z-\sqrt nS^{-1}\nabla\Phi(\theta^*))$ converges to $0$ almost surely. Moreover, 
\begin{align*}
	\pi_{T_n}^S(-Z-\sqrt nS^{-1}\nabla\Phi(\theta^*)) & = \pi_{\sqrt n(\Theta-\theta^*)}^S (-Z-\sqrt nS^{-1}\nabla\Phi(\theta^*)) \\
	& = \sqrt n \pi_{\Theta-\theta^*}^S(-n^{-1/2}Z-S^{-1}\nabla\Phi(\theta^*)) \\
	& \xrightarrow[n\to\infty]{} \diff^+\pi_{\Theta-\theta^*}^S(-S^{-1}\nabla\Phi(\theta^*);-Z) 
\end{align*}
almost surely, using the third assumption of the theorem.
Therefore, we conclude that $t_n^*\xrightarrow[n\to\infty]{} \diff^+\pi_{\Theta-\theta^*}^S(-S^{-1}\nabla\Phi(\theta^*);-Z)$ almost surely and, hence, in distribution. The desired results follows, since $Z$ and $-Z$ are identically distributed. 

\paragraph{\underline{Convergence in probability of $\hat t_n-t_n^*$ to $0$}.} \,

Fix $\varepsilon>0$. Since the sequence $(t_n^*)_{n\geq 1}$ converges in distribution (see the previous paragraph), it is tight, that is, there must exist some $M>0$ such that for all $n\geq 1$, $P(\|t_n^*\|_S\leq M)\geq 1-\varepsilon$. 
Let $K=B_S(0,M+\varepsilon)$ and fix some $\eta>0$ to be chosen below. \eqref{eqn:Unif_Conv} yields that for all large enough $n\geq 1$, $\sup_{t\in K}|\hat G_n(t)-G_n(t)|\leq\eta$ with probability at least $1-\varepsilon$. Therefore, by the union bound, for all large enough $n\geq 1$, it holds with probability at least $1-2\varepsilon$ that simultaneously for all $t\in T_n$ with $\|t-t_n^*\|_S=\varepsilon$, 
\begin{align*}
	\hat G_n(t) & \geq G_n(t)-\eta \\
	& \geq G_n(t_n^*)+\frac{\varepsilon^2}{2}-\eta \\
	& \geq \hat G_n(t_n^*)-\eta +\frac{\varepsilon^2}{2}-\eta.
\end{align*}
Hence, chosing $\eta=\varepsilon^2/8$, we obtain that for all large enough integers $n$, with probability at least $1-2\varepsilon$, $\hat G_n(t)>\hat G_n(t_n^*)$ simultaneously for all $t\in T_n$ with $\|t-t_n^*\|_S=\varepsilon$. Corollary~\ref{cor:Rockafellar_as} yields that for all large enough integers $n$, with probability at least $1-2\varepsilon$, $\|\hat t_n-t_n^*\|_S\leq \varepsilon$. That is, $\hat t_n-t_n^*$ converges in probability to $0$.

\paragraph{\underline{Conclusion}.}
\,
We have proved that $t_n^*$ converges in distribution to $\diff^+\pi_{\Theta-\theta^*}^S(-S^{-1}\nabla\Phi(\theta^*);Z)$ for some Gaussian random variable $Z$ and that $\hat t_n-t_n^*$ converges to zero in probability, as $n\to\infty$. Hence, Slutsky's theorem implies the desired result. 

\end{proof}

\begin{remark}[On the joint asymptotic distribution of $M$-estimators]
By augmenting the loss function of $M$-estimators falling in the framework of the previous theorem, we can also easily obtain the joint asymptotic distribution of such $M$-estimators. Indeed, consider two such $M$-estimators associated with loss functions (that are convex in their second argument) $\phi_1:E\times\Theta_0\to\R$ and $\phi_2:E\times\Xi_0\to\R$ where $\Theta_0$ and $\Xi_0$ are open convex subsets of $\R^{d_1}$ and $\R^{d_2}$ respectively, for some positive integers $d_1$ and $d_2$. Consider two constraint sets $\Theta\subseteq\Theta_0$ and $\Xi\subseteq\Xi_0$ that are closed and convex. Denote these two $M$-estimators by $\hat\theta_n$ and $\hat\xi_n$, and their population counterpart by $\theta^*$ and $\xi*$, respectively.

Assume that all assumptions of Theorem~\ref{thm:asympt_norm} are satisfied. Let $\phi:E\times \Theta_0\times\Xi_0\to\R$ be the loss function defined by $\phi(x,\theta,\xi)=\phi_1(x,\theta)+\phi_2(x,\xi)$, for all $x\in E$, $\theta\in\Theta_0$ and $\xi\in\Xi_0$. Then, the pair $(\hat\theta_n,\hat\xi_n)$ is the $M$-estimator obtained with loss function $\phi$ and constraint set $\Theta\times\Xi$ (which is a closed, convex subset of $\Theta_0\times\Xi_0$), with population counterpart $(\theta^*,\xi^*)$, and it is easy to check that all assumptions of Theorem~\ref{thm:asympt_norm} are then met for this $M$-estimator. Hence, we obtain that
$$\sqrt n\left(\begin{matrix}
\hat\theta_n-\theta^*\\\hat\xi_n-\xi^*
\end{matrix}\right) \xrightarrow[n\to\infty]{}\diff^+\pi_{(\Theta-\theta^*)\times (\Xi-\xi^*)}^S\left(-S^{-1}\left(\begin{matrix}
\nabla\Phi_1(\theta^*) \\ \nabla\Phi_2(\xi^*)
\end{matrix}\right);Z\right)$$
where 
\begin{itemize}
	\item $\Phi_1(\theta)=\E[\phi_1(X_1,\theta)]$ and $\Phi_2(\xi)=\E[\phi_2(X_1,\xi)]$ for all $\theta\in\Theta_0$ and $\xi\in\Xi_0$,
	\item $\DS S=\left(\begin{matrix}
\nabla^2\Phi_1(\theta^*) & 0 \\ 0 & \nabla^2\Phi_2(\xi^*)
\end{matrix}\right)$,
	\item $Z\sim\mathcal N_{d_1+d_2}(0,S^{-1}BS^{-1})$ and, finally, 
	\item $B$ is the covariance matrix of the vector $(d_1+d_2)$-dimensional vector $\DS \left(\begin{matrix}
g_1(X_1,\theta^*) \\ g_2(X_1,\xi^*)
\end{matrix} \right)$ where $g_1(X_1,\theta^*)$ is a subgradient of $\phi_1(X_1,\cdot)$ at $\theta^*$ and $g_2(X_1,\xi^*)$ is a subgradient of $\phi_2(X_1,\cdot)$ at $\xi^*$.
\end{itemize}

In particular, in the absence of constraints (that is, when $\pi_{(\Theta-\theta^*)\times (\Xi-\xi^*)}^S$ is the identity), these estimators are jointly asymptotically normal.
\end{remark}

\vspace{4mm}

In the proof of Theorem~\ref{thm:asympt_norm}, the convergence that we obtained in \eqref{eqn:pointwisePconv} actually holds in the $L^2$ sense (see \eqref{eqn:convL2}). Therefore, Corollary~\ref{cor:Rockafellar_Lp} implies uniform convergence on all compact subsets in the $L^2$ sense. Yet, it is not clear, from there, how to proceed and prove that $\hat t_n-t_n^*\xrightarrow[n\to\infty]{} 0$ in $L^2$. Proving this convergence would yield an exact asymptotic quantification of the mean squared error of $\hat\theta_n$, since, it would yield that 
$$n\E[\|\hat\theta_n-\theta^*\|^2]\xrightarrow[n\to\infty]{}\E[\|\diff^+\pi_{\Theta-\theta^*}^S(-S^{-1}\nabla\Phi(\theta^*);Z)\|^2]$$
where $Z$ is a Gaussian vector as in the theorem. We leave the following question open:
\begin{openquestion}
	Is it true that under the assumptions of Theorem~\ref{thm:asympt_norm}, for all large enough $n$, $\hat\theta_n$ has two moments, and that 
	$$n\E[\|\hat\theta_n-\theta^*\|^2]\xrightarrow[n\to\infty]{}\E[\|\diff^+\pi_{\Theta-\theta^*}^S(-S^{-1}\nabla\Phi(\theta^*);Z)\|^2]?$$
\end{openquestion}

We can further obtain a more precise asymptotic description of $\hat\theta_n$ in the following setup. Let $T$ be the tangent cone to $\Theta$ at $\theta^*$. Assume that $T$ contains a non-trivial linear space $L$. For instance, this happens when $\Theta$ is a convex polytope and $\theta^*$ is not a vertex of $\Theta$. Then, $T\cap L^\perp$ is a convex cone and one can easily check that $T$ can be decomposed as $T=L+(T\cap L^\perp)$. Moreover, any $t\in T$ can be uniquely decomposed as $t=u+v$ where $u\in L$ and $s\in T\cap L^\perp$ ($u$ is given by $\pi_{L}(t)$).  
Recall that $\nabla\Phi(\theta^*)^\top t\geq 0$ for all $t\in T$, by Lemma~\ref{lem:FOC_gen}. In particular, $\nabla\Phi(\theta^*)^\top u=0$ for all $u\in L$.  
Recall the definition of the function $F_n$ from \eqref{eq:Fn}. As a consequence of the convergence proved in \eqref{eqn:pointwisePconv}, we obtain the following result.

\begin{lemma} \label{lem:pointwisePconv_uv} 
	Let $R>0$ be fixed. Then, 
\begin{align*}\sup_{\substack{u\in L:\|u\|\leq R \\v\in T\cap L^\perp:\|v\|\leq R}} &  \bigg| n\left(\Phi_n\left(\theta^*+\frac{u}{\sqrt n}+\frac{v}{n}\right)-\Phi_n(\theta^*)\right) \\
& \quad \quad \quad -u^\top Z_n-\frac{1}{2}u^\top\nabla^2\Phi(\theta^*)u-v^\top\nabla\Phi(\theta^*)\bigg| \xrightarrow[n\to\infty]{}0
\end{align*}
in probability, where $\DS Z_n=\sqrt n \left(\frac{1}{n}\sum_{i=1}^n g(X_i,\theta^*)-\nabla\Phi(\theta^*)\right)$.
\end{lemma}

\begin{proof}
	Recall that $\DS\sup_{t\in B(0,2R)}|F_n(t)|\xrightarrow[n\to\infty]{}0$ in probability, by \eqref{eqn:pointwisePconv}. Now, for all large enough $n$, $\|u+v/\sqrt n\|\leq 2R$ for all $u\in L, v\in L^\perp$ with $\|u\|\leq R$ and $\|v\|\leq R$. Hence, 
$\DS\sup_{\substack{u\in L:\|u\|\leq R \\ v\in L^\perp:\|v\|\leq R}}|F_n(u+v/\sqrt n)|\xrightarrow[n\to\infty]{}0$ in probability. Now, for all $u\in L$ and $v\in L^\perp$, 
\begin{align*}
F_n(u+v/\sqrt n) & = n\left(\Phi_n(\theta^*+u/\sqrt n+v/n)-\Phi_n(\theta^*)\right)-\frac{u^\top}{\sqrt n}\sum_{i=1}^n g(X_i,\theta^*)-\frac{1}{2}u^\top\nabla^2\Phi(\theta^*)u \\
& \quad \quad \quad - \frac{v^\top}{n}\sum_{i=1}^n g(X_i,\theta^*) -\frac{u^\top}{\sqrt n}\nabla^2\Phi(\theta^*)v-\frac{1}{2n}v^\top\nabla^2\Phi(\theta^*)v.
\end{align*}
The first term on the second line of this display converges in probability to $v^\top\nabla\Phi(\theta^*)$ uniformly in $v$ with $\|v\|\leq R$ by the law of large numbers, and the last two terms go (deterministically) to $0$ uniformly in $u,v$ with $\|u\|,\|v\|\leq R$. The desired result then follows from the fact that $u^\top\nabla\Phi(\theta^*)=0$ for all $u\in L$. 
\end{proof}

Now, based on Lemma~\ref{lem:pointwisePconv_uv} and following the same reasoning as in the proof of Theorem~\ref{thm:asympt_norm}, we obtain the following theorem.

\begin{theorem} \label{thm:asymp_uv}
	Recall the notation of Theorem~\ref{thm:asympt_norm}. Furthermore, let $T$ be the tangent cone to $\Theta$ at $\theta^*$ and $C=\bar T$ be the supporting cone. Let $L$ be the largest linear subspace that is contained in $T$. 
	Assume that $\Phi$ is twice differentiable at $\theta^*$, that $g(\cdot,\theta^*)\in L^2(P)$. Further assume that for all $v\in (C\cap L^\perp)\setminus\{0\}$, $v^\top\nabla\Phi(\theta^*)>0$. 
Then, 
$$\sqrt n\pi_L(\hat\theta_n-\theta^*) \xrightarrow[n\to\infty]{} \pi_L^S(Z)$$
in distribution, where $Z\sim\mathcal N_d(0,S^{-1}BS^{-1})$ and
$$n\pi_{L^\perp}(\hat\theta_n-\theta^*)\xrightarrow[n\to\infty]{}0$$
in probability.
\end{theorem}

Under the assumptions of this theorem, convergence of $\sqrt\pi_L(\hat\theta_n-\theta^*)$ could already be deduced from Theorem~\ref{thm:asympt_norm}. (recall that $B$ is the covariance matrix of $g(X_1,\theta^*)$). However, convergence of $n\pi_{L^\perp}(\hat\theta_n-\theta^*)$ is stronger than what is given in Theorem~\ref{thm:asympt_norm}, which only yields convergence to $0$ at rate $n^{-1/2}$.
When $\Phi$ is differentiable at $\theta^*$, the assumptions of Theorem~\ref{thm:First_order_asymp} are a particular case of Theorem~\ref{thm:asymp_uv} ($L$ must be $\{0\}$ there), and the conclusion of Theorem~\ref{thm:First_order_asymp} was stronger in that case. However, in a sense, when $\Phi$ is differentiable at $\theta^*$, Theorem~\ref{thm:asymp_uv} provides a more complete description.

\section{Extension: Convex $U$-estimation} \label{sec:U-estimators}

The previous theory can be easily extended to more general convex empirical risks, e.g., when $\Phi_n(\theta)$ is a $U$-statistic. With the same notation as in the previous sections, fix some positive integer $k$ and let $\phi:E^k\times\Theta_0\to\R$ be symmetric and measurable in its first $k$ arguments and convex in its last. Also assume that for all $\theta\in\Theta_0$, $\phi(\cdot,\theta)\in L^1(P^{\otimes k})$, that is, $\phi(X_1,\ldots,X_k,\theta)$ is integrable. Set $\Phi(\theta)=\E[\phi(X_1,\ldots,X_k,\theta)]$ and, for all $n\geq k$, 
$$\Phi_n(\theta)=\frac{1}{{n\choose k}}\sum_{1\leq i_1<\ldots<i_k\leq n}\phi(X_{i_1},\ldots,X_{i_k},\theta).$$ 

Estimators obtained by minimizing such empirical risks are called $U$-estimators. Some relevant examples include:

\begin{enumerate}
	\item Location estimators through depth functions: Let $E=\Theta_0=\Theta=\R^d$, $k=d$ and $\phi(x_1,\ldots,x_d,\theta)$ be the volume of the $d$-dimensional simplex spanned by $x_1,\ldots,x_d,\theta$, for all $x_1,\ldots,x_d,\theta\in\R^d$. The minimizers of $\Phi$ are then called Oja's population medians \cite{oja1983descriptive}. Note that $\phi(x_1,\ldots,x_d,\theta)$ is the absolute value of an affine function of $\theta$, hence, it is convex in $\theta$. We recover consistency and asymptotic normality of Oja's empirical medians (see \cite{oja1985asymptotic}) as particular cases of our asymptotic theorems (see below for $U$-estimators). More generally, we refer to \cite{zuo2000general} for other definitions of medians that are $U$-estimators associated with depth functions. 
	\item Let $E=\R$ and $\Theta\subseteq\Theta_0=\R$ and $k\geq 1$. \cite{minsker2023u} proposes a version of the median of mean estimator defined as a $U$-estimator obtained by computing an empirical median of all empirical averages of the form $\frac{1}{k}\sum_{i\in I}X_i$, for $I\subseteq\{1,\ldots,n\}$ of size $k$. That is, $\phi(x_1,\ldots,x_k,\theta)=\left|\frac{x_1+\ldots+x_k}{k}-\theta\right|$, for all $x_1,\ldots,x_k,\theta\in\R$. The difference with standard median of mean estimators \cite{nemirovskij1983problem,lerasleandoliveira2011,lecue2020robust} is that in \cite{minsker2023u}, all possible subsamples of size $k$, with overlaps, are considered. Other frameworks, such as geometric medians of means in multivariate settings \cite{minsker2015geometric} can be considered as well. Note that in \cite{minsker2023u}, the order $k$ of the $U$-process is allowed to grow with the sample size $n$ - we do not consider this setup here and leave it for future work. 
	\item More generally, aggregation of estimators that are based on overlapping subsamples, e.g., random forests \cite{breiman2001random} or bagging \cite{breiman1996bagging}, which have attracted lots of interest in modern machine learning. 
	\item Scatter estimation and robustness: Let $E=\R$, $\Theta_0=\R$, $k=2$ and $\phi(x_1,x_2,\theta)=\ell(|x_1-x_2|^p-\theta)$ where $p\geq 1$ and $\ell=\R\to\R$ is a convex function. When $p=2$ and $\ell(u)=u^2, u\in\R$, $\hat\theta_n$ is simply twice the empirical variance of $X_1,\ldots,X_n$ and if $\ell=h_c$ for some $c>0$ (recall the definition of $h_c$ from Section~\ref{sec:Preliminaries}), we obtain a robust version of the empirical variance. If now $p=1$ and $\ell(u)=u^2, u\in\R$, we obtain Gini's mean absolute difference, while if $\ell=|\cdot|$, we obtain a proxy to a median absolute deviation (and intermediate robust versions if $\ell=h_c$ for some $c>0$). In higher dimensions, one recovers the empirical covariance matrix of $X_1,\ldots,X_n$ by setting $\phi(x_1,x_2,\theta)=\tr(((x_1-x_2)(x_1-x_2)^\top-\theta)^2)$, for all $\theta\in\R^{d\times d}\approx \R^{d^2}$ and $x_1,x_2\in\R^d$. Robust versions can be defined by taking the square root of the above, or applying Huber's loss $h_c$ for some $c>0$. 
	\item Empirical risk minimization where the choice of loss function itself depends on the data (e.g., for data driven procedures), see, e.g., \cite{sherman1994u}.

\end{enumerate}

Note that $U$-statistics depending on a parameter (here, $\Phi_n(\theta), \theta\in\Theta_0$) have been studied as $U$-processes, see, e.g., \cite{nolan1987u,nolan1988functional,arcones1993limit}.  Here, we first recall the classical law of large numbers and central limit theorem for $U$-statistics.

\begin{theorem}{Law of large numbers for $U$-statistics \cite[Theorem 8.6]{henze2024asymptotic}} \label{thm:LLN-U}
Let $h:E^k\to\R^d$ be a symmetric, measurable map satisfying $h\in L^1(P^{\otimes k})$. Then, 
$$\frac{1}{{n\choose k}}\sum_{1\leq i_1<\cdot<i_k\leq n}h(X_{i_1},\ldots,X_{i_k})\xrightarrow[n\to\infty]{}\E[h(X_1,\ldots,X_k)]$$ 
almost surely.
\end{theorem}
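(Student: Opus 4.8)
The plan is to reduce to the scalar case and then exploit the reverse martingale structure of $U$-statistics. Since convergence in $\R^d$ is equivalent to coordinatewise convergence, and $h\in L^1(P^{\otimes k})$ if and only if each of its coordinates is, I would first assume without loss of generality that $d=1$, so that $h:E^k\to\R$.

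For each $n\geq k$, write $U_n={n\choose k}^{-1}\sum_{1\leq i_1<\cdots<i_k\leq n}h(X_{i_1},\ldots,X_{i_k})$, and let $\mathcal E_n$ be the sub-$\sigma$-algebra of $\mathcal F$ consisting of those events that are invariant under every permutation of the coordinates $X_1,\ldots,X_n$ of the sequence $(X_m)_{m\geq 1}$ (equivalently, the information contained in the unordered first $n$ samples together with all later ones). The sequence $(\mathcal E_n)_{n\geq k}$ is non-increasing. The central step I would carry out is to establish the identity
\[
U_n=\E\bigl[h(X_1,\ldots,X_k)\mid\mathcal E_n\bigr]\qquad\text{almost surely.}
\]
The argument: conditionally on $\mathcal E_n$, the vector $(X_1,\ldots,X_n)$ is exchangeable (this is where the i.i.d.\ assumption enters), so $\E[h(X_{i_1},\ldots,X_{i_k})\mid\mathcal E_n]$ takes the same value for every ordered $k$-tuple $(i_1,\ldots,i_k)$ of distinct indices in $\{1,\ldots,n\}$; averaging this common value over all such tuples yields $\E[h(X_1,\ldots,X_k)\mid\mathcal E_n]$ on one side and, using the symmetry of $h$ to pass from ordered to increasing tuples, exactly $U_n$ on the other.

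With this identity in hand, $(U_n,\mathcal E_n)_{n\geq k}$ is a reverse (backward) martingale whose terminal element $h(X_1,\ldots,X_k)$ is integrable, so the backward martingale convergence theorem gives that $U_n$ converges, almost surely and in $L^1$, to $\E[h(X_1,\ldots,X_k)\mid\mathcal E_\infty]$, where $\mathcal E_\infty=\bigcap_{n\geq k}\mathcal E_n$ is the exchangeable $\sigma$-algebra of $(X_m)_{m\geq 1}$. Finally, the Hewitt--Savage zero--one law asserts that $\mathcal E_\infty$ is $\PP$-trivial, so the conditional expectation collapses to the constant $\E[h(X_1,\ldots,X_k)]$, which is the claim. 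I expect the only genuinely delicate point to be the reverse martingale identity displayed above: one has to justify cleanly that conditioning on the permutation-invariant $\sigma$-algebra renders $(X_1,\ldots,X_n)$ exchangeable (a short disintegration/symmetrization argument) and then count index tuples correctly when symmetrizing; the remaining steps are direct appeals to standard martingale theory and the Hewitt--Savage law, and indeed this is the classical route behind \cite[Theorem 8.6]{henze2024asymptotic}, recorded here for completeness.
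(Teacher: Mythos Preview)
The paper does not supply its own proof of this theorem: it is stated as a classical fact with a citation to \cite[Theorem~8.6]{henze2024asymptotic} and then used as a black box in Section~\ref{sec:U-estimators}. Your reverse-martingale argument via the exchangeable $\sigma$-algebras $\mathcal E_n$, backward martingale convergence, and the Hewitt--Savage zero--one law is correct and is precisely the standard proof one finds in the cited source, so there is nothing to compare.
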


\begin{theorem}{Central limit theorem for multivariate $U$-statistics \cite[Theorem 7.1]{hoeffding1992class}, \cite[Theorem 8.9]{henze2024asymptotic}} \label{thm:CLT_U}
Let $h:E^k\to\R^d$ be a symmetric, measurable map satisfying $h\in L^2(P^{\otimes k})$. Let $\Sigma$ be the covariance matrix of $\E[h(X_1,\ldots,X_k)|X_1]$.\footnote{$\Sigma$ can also be written as $\E[h(X_1,X_2,\ldots,X_k)h(X_1,X_2',\ldots,X_k')^\top]-\E[h(X_1,\ldots,X_k)]\E[h(X_1,\ldots,X_k)]^\top$, that is, the covariance of the random vectors $h(X_1,X_2,\ldots,X_k)$ and $h(X_1,X_2',\ldots,X_k')$, where $X_2',\ldots,X_k'$ are such that $X_1,X_2,\ldots,X_k,X_2',\ldots,X_k'$ are iid.} For all $n\geq k$, let $\DS U_n=\frac{1}{{n\choose k}}\sum_{1\leq i_1<\cdot<i_k\leq n}h(X_{i_1},\ldots,X_{i_k})$. Then,
$$\sqrt n(U_n-\E[h(X_1,\ldots,X_k)])\xrightarrow[n\to\infty]{} \mathcal N_d(0,k^2\Sigma)$$
in distribution.
\end{theorem}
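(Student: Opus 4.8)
The plan is to reduce the multivariate statement to the one‑dimensional case by the Cramér--Wold device and then to treat the one‑dimensional case by Hoeffding's projection method. First I would assume without loss of generality that $\E[h(X_1,\ldots,X_k)]=0$ (replace $h$ by $h-\E[h(X_1,\ldots,X_k)]$, which changes neither $U_n-\E[U_n]$ nor $\Sigma$). Fix $a\in\R^d$; then $h_a:=a^\top h:E^k\to\R$ is symmetric, measurable and square‑integrable, and $a^\top U_n$ is the $U$‑statistic with kernel $h_a$. Since $a^\top\mathcal N_d(0,k^2\Sigma)=\mathcal N_1(0,k^2 a^\top\Sigma a)$ and $a^\top\Sigma a=\var\big(\E[h_a(X_1,\ldots,X_k)\mid X_1]\big)$, it suffices to prove the convergence $\sqrt n\,a^\top U_n\xrightarrow[n\to\infty]{}\mathcal N_1\big(0,k^2\var(h_1(X_1))\big)$ for each $a$, where $h_1(x):=\E[h_a(x,X_2,\ldots,X_k)]$. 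From here one works with the scalar kernel $h_a$, which I keep calling $h$, and its first projection $h_1$.

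Next I would introduce the Hájek projection $\hat U_n:=\sum_{i=1}^n\E[U_n\mid X_i]$. Using symmetry of $h$ and $\E[h]=0$, the conditional expectation of the kernel vanishes on every index tuple not containing $i$ and equals $h_1(X_i)$ on the $\binom{n-1}{k-1}$ tuples that do, so $\E[U_n\mid X_i]=\frac{k}{n}h_1(X_i)$ and hence $\hat U_n=\frac{k}{n}\sum_{i=1}^n h_1(X_i)$. Because conditional expectation is an $L^2$ contraction, $h_1(X_1)\in L^2(P)$, so the classical central limit theorem for the iid sequence $(h_1(X_i))_{i\ge1}$ gives $\sqrt n\,\hat U_n=k\cdot\frac1{\sqrt n}\sum_{i=1}^n h_1(X_i)\xrightarrow[n\to\infty]{}\mathcal N_1\big(0,k^2\var(h_1(X_1))\big)$ in distribution; note that $\var(h_1(X_1))$ is exactly the scalar $a^\top\Sigma a$ of the reduced problem.

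The key step — and the only one requiring real work — is to show the projection is asymptotically exact, i.e. $\sqrt n\,(U_n-\hat U_n)\xrightarrow[n\to\infty]{}0$ in probability, for which by Chebyshev it is enough to bound $n\,\E[(U_n-\hat U_n)^2]\to 0$. I would do this either via the Hoeffding decomposition $U_n=\sum_{c=1}^k\binom{k}{c}U_n^{(c)}$ into mutually uncorrelated degenerate components with $\var(U_n^{(c)})=O(n^{-c})$, so that $U_n-\hat U_n=\sum_{c=2}^k\binom{k}{c}U_n^{(c)}$ has $\var(U_n-\hat U_n)=O(n^{-2})$; or directly, by combining the exact variance formula $\var(U_n)=\binom{n}{k}^{-1}\sum_{c=1}^k\binom{k}{c}\binom{n-k}{k-c}\zeta_c$ with $\zeta_c=\var(\E[h\mid X_1,\ldots,X_c])$, the analogous expressions for $\var(\hat U_n)$ and $\cov(U_n,\hat U_n)$, and observing that the order‑$n^{-1}$ terms (all proportional to $\zeta_1$) cancel, leaving a difference of order $n^{-2}$. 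Either way $n\,\E[(U_n-\hat U_n)^2]=O(n^{-1})\to0$. Finally, writing $\sqrt n\,U_n=\sqrt n\,\hat U_n+\sqrt n\,(U_n-\hat U_n)$ and applying Slutsky's theorem yields $\sqrt n\,a^\top U_n\xrightarrow[n\to\infty]{}\mathcal N_1(0,k^2 a^\top\Sigma a)$ for every $a$, and the Cramér--Wold device concludes. I expect the main obstacle to be purely the bookkeeping behind the $O(n^{-2})$ bound on $\var(U_n-\hat U_n)$; the rest is the iid CLT together with the linear reduction to dimension one.
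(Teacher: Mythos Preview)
Your argument is correct and is exactly the classical route: reduce to a scalar kernel via Cramér--Wold, use the Hájek projection $\hat U_n=\frac{k}{n}\sum_i h_1(X_i)$, apply the iid CLT to $\hat U_n$, and show $n\var(U_n-\hat U_n)\to 0$ via the variance formula/Hoeffding decomposition. There is no gap.

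Note, however, that the paper does not supply its own proof of this statement: Theorem~\ref{thm:CLT_U} is quoted from \cite[Theorem 7.1]{hoeffding1992class} and \cite[Theorem 8.9]{henze2024asymptotic} and used as a black box in the proof of Theorem~\ref{thm:asymp_norm_U}. Your write-up is precisely the argument one finds in those references, so there is nothing to compare --- you have reconstructed the cited proof rather than diverged from it.
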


Theorem~\ref{thm:consist} obviously remains true in the context of $U$-estimation with convex loss. Proposition~\ref{prop:non-smooth}, Theorems~\ref{thm:non-diff} and \ref{thm:First_order_asymp} require more care but also remain true in this context. Proofs are deferred to Section~\ref{sec:appendix_extra_proofs}. Below, we rewrite Theorem~\ref{thm:asympt_norm} for $U$-estimators, where an extra multiplicative factor $k$ appears in the limit, accounting for the dependence of the terms in the new definition of $\Phi_n$. 

\begin{theorem}{Asymptotic distribution for $U$-estimators} \label{thm:asymp_norm_U}
Let $g:E^k\times\Theta_0\to\R^d$ be a measurable selection of subgradients of $\phi$. Assume the following:
\begin{itemize}
	\item[(i)] $\Phi$ has a unique minimizer $\theta^*$ in $\Theta$, it is twice differentiable at $\theta^*$ and $S:=\nabla^2\Phi(\theta^*)$ is positive definite;
	\item[(ii)] $g(\cdot,\theta^*)\in L^2(P^{\otimes k})$;
	\item[(iii)] $\pi_{\Theta-\theta^*}^S$ has directional derivatives at $-S^{-1}\nabla\Phi(\theta^*)$. 
\end{itemize}
Then, 
$$\sqrt n(\hat\theta_n-\theta^*)\xrightarrow[n\to\infty]{} k\diff^+\pi_{\Theta-\theta^*}^S(S^{-1}\nabla\Phi(\theta^*);Z)$$
in distribution, where $Z\sim\mathcal N_d(0,S^{-1}BS^{-1})$ and $B=\var(\E[g(X_1,\ldots,X_k,\theta^*)|X_1])$. 
\end{theorem}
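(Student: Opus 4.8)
The plan is to re-run the proof of Theorem~\ref{thm:asympt_norm}, replacing the i.i.d.\ law of large numbers and central limit theorem by their $U$-statistic versions (Theorems~\ref{thm:LLN-U} and~\ref{thm:CLT_U}), and replacing the elementary independence-based variance computation by the classical bound $\var\bigl(U_n(h)\bigr)\le (k/n)\,\var\bigl(h(X)\bigr)$, valid for any symmetric kernel $h\in L^2(P^{\otimes k})$ with order-$k$ $U$-statistic $U_n(h)$ (a consequence of the Hoeffding decomposition). Here and below we abbreviate $x=(x_1,\dots,x_k)\in E^k$, $\phi(x,\theta):=\phi(x_1,\dots,x_k,\theta)$, $g(x,\theta):=g(x_1,\dots,x_k,\theta)$, $X:=(X_1,\dots,X_k)$, and we write $U_n(h)$ for the order-$k$ $U$-statistic with kernel $h$. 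We take $g$ symmetric in its first $k$ arguments, as is implicit in the definition of $B$ and in Theorem~\ref{thm:CLT_U}; this can always be arranged, $\phi$ being symmetric and subgradient sets convex, so that symmetrizing a measurable selection of subgradients over its first $k$ arguments yields another one. A sequence $(\hat\theta_n)_{n\ge1}$ of measurable minimizers of $\Phi_n$ on $\Theta$ for all large $n$, with $\hat\theta_n\to\theta^*$ almost surely, is furnished by Theorem~\ref{thm:measurable_min} applied to $f_n=\Phi_n$, $f=\Phi$ (using Theorem~\ref{thm:LLN-U} for the pointwise a.s.\ convergence and~(i) for $\Argmin_\Theta\Phi=\{\theta^*\}$). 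Finally, the $U$-statistic analogue of Theorem~\ref{thm:measurable_subgradient} (same proof, with $E,P$ replaced by $E^k,P^{\otimes k}$) gives $\E[g(X,\theta^*)]\in\partial\Phi(\theta^*)=\{\nabla\Phi(\theta^*)\}$, since $\Phi$ is differentiable at $\theta^*$ by~(i).

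Fix $R>0$. For $t\in B_S(0,R)$ and $n$ large set $\psi_{n,t}(x)=\phi(x,\theta^*+t/\sqrt n)-\phi(x,\theta^*)-n^{-1/2}t^\top g(x,\theta^*)$, which is symmetric in $x_1,\dots,x_k$ and, by convexity of $\phi$ in its last argument, satisfies $0\le\psi_{n,t}\le n^{-1/2}Y_n$ pointwise, where $Y_n(x)=t^\top\bigl(g(x,\theta^*+t/\sqrt n)-g(x,\theta^*)\bigr)\ge0$. Then
\[
\tilde G_n(t):=n\bigl(\Phi_n(\theta^*+t/\sqrt n)-\Phi_n(\theta^*)\bigr)-\sqrt n\,t^\top U_n\bigl(g(\cdot,\theta^*)\bigr)=n\,U_n(\psi_{n,t}).
\]
Arguing exactly as in the proof of Theorem~\ref{thm:asympt_norm} --- monotonicity of subgradients (Lemma~\ref{lem:monoton_subgrad}) makes $(Y_n)$ non-increasing, Lemma~\ref{lem:seq_subgrad} and monotone convergence give $\E[Y_n(X)]\to0$ and then $\E[Y_n(X)^2]\to0$ --- the variance bound yields $n^2\var\bigl(U_n(\psi_{n,t})\bigr)\le nk\,\E[\psi_{n,t}(X)^2]\le k\,\E[Y_n(X)^2]\to0$, while $\E[\tilde G_n(t)]=n\bigl(\Phi(\theta^*+t/\sqrt n)-\Phi(\theta^*)\bigr)-\sqrt n\,t^\top\nabla\Phi(\theta^*)\to\tfrac12 t^\top St$ by~(i) and $\E[g(X,\theta^*)]=\nabla\Phi(\theta^*)$. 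Hence $\tilde G_n(t)\to\tfrac12 t^\top St$ in probability for every $t$; as $\tilde G_n$ is a random convex function (an average of convex functions of $t$, minus a linear one), Corollary~\ref{cor:Rockafellar_as} upgrades this to $\sup_{t\in B_S(0,R)}\bigl|\tilde G_n(t)-\tfrac12 t^\top St\bigr|\to0$ in probability. Equivalently, writing $\hat G_n(t)=n(\Phi_n(\theta^*+t/\sqrt n)-\Phi_n(\theta^*))$ and $G_n(t)=\sqrt n\,t^\top U_n(g(\cdot,\theta^*))+\tfrac12 t^\top St$, one has $\sup_{t\in B_S(0,R)}|\hat G_n-G_n|\to0$ in probability.

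From here the argument is verbatim the endgame of Theorem~\ref{thm:asympt_norm}. The variable $\hat t_n:=\sqrt n(\hat\theta_n-\theta^*)$ minimizes $\hat G_n$ over $T_n:=\sqrt n(\Theta-\theta^*)$ for all large $n$ with probability $1$, and completing the square shows $G_n$ has the unique minimizer $t_n^*=\pi_{T_n}^S\bigl(-\sqrt n\,S^{-1}U_n(g(\cdot,\theta^*))\bigr)=\sqrt n\,\pi_{\Theta-\theta^*}^S\bigl(-S^{-1}U_n(g(\cdot,\theta^*))\bigr)$ over $T_n$. The first-order condition (Lemma~\ref{lem:FOC_gen}) puts $-S^{-1}\nabla\Phi(\theta^*)$ in the $S$-normal cone to $\Theta-\theta^*$ at $0$, so $\pi_{\Theta-\theta^*}^S(-S^{-1}\nabla\Phi(\theta^*))=0$. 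Theorem~\ref{thm:LLN-U} gives $U_n(g(\cdot,\theta^*))\to\nabla\Phi(\theta^*)$ a.s.\ and Theorem~\ref{thm:CLT_U} gives $W_n:=\sqrt n\bigl(U_n(g(\cdot,\theta^*))-\nabla\Phi(\theta^*)\bigr)\to\mathcal N_d(0,k^2B)$ in distribution, with $B=\var(\E[g(X,\theta^*)\mid X_1])$; after a Skorohod coupling so that $W_n\to W\sim\mathcal N_d(0,k^2B)$ a.s., non-expansiveness of $\pi_{\Theta-\theta^*}^S$ and assumption~(iii) give
\[
t_n^*=\sqrt n\,\pi_{\Theta-\theta^*}^S\bigl(-S^{-1}\nabla\Phi(\theta^*)-n^{-1/2}S^{-1}W_n\bigr)\xrightarrow[n\to\infty]{}\diff^+\pi_{\Theta-\theta^*}^S\bigl(-S^{-1}\nabla\Phi(\theta^*);-S^{-1}W\bigr)
\]
a.s., hence in distribution; since $-S^{-1}W\sim\mathcal N_d(0,k^2S^{-1}BS^{-1})$ has the law of $kZ$ with $Z\sim\mathcal N_d(0,S^{-1}BS^{-1})$, positive homogeneity of $\diff^+\pi_{\Theta-\theta^*}^S(-S^{-1}\nabla\Phi(\theta^*);\cdot)$ identifies this limit in law with $k\,\diff^+\pi_{\Theta-\theta^*}^S(-S^{-1}\nabla\Phi(\theta^*);Z)$ (the $S^{-1}\nabla\Phi(\theta^*)$ in the displayed limit of the statement being read as $-S^{-1}\nabla\Phi(\theta^*)$, consistently with~(iii) and Theorem~\ref{thm:asympt_norm}). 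Lastly, tightness of $(t_n^*)$, the uniform estimate just obtained, and the quadratic minorization $G_n(t)\ge G_n(t_n^*)+\tfrac12\|t-t_n^*\|_S^2$ feed into Lemma~\ref{lem:convexmin}, exactly as in Theorem~\ref{thm:asympt_norm}, to give $\|\hat t_n-t_n^*\|_S\to0$ in probability; Slutsky's theorem concludes.

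The only genuinely new obstacle is the middle paragraph: since the summands of $\Phi_n$ are now dependent, the fluctuations of the remainder $n\,U_n(\psi_{n,t})$ can no longer be killed by a one-line independence argument, so one must instead invoke the Hoeffding-type bound $\var(U_n)\le (k/n)\var(\text{kernel})$ and then verify --- through monotonicity of subgradients and Lemma~\ref{lem:seq_subgrad} --- that the kernel's second moment decays fast enough to absorb the extra factor $n$. The factor $k$ in the limit is just the $\sqrt{k^2}$ from the $U$-statistic central limit theorem, extracted by positive homogeneity of the directional derivative; everything else is a line-by-line transcription of the $M$-estimation proof.
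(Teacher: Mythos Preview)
Your proof is correct and follows the paper's own argument closely: the overall architecture --- show $F_n(t)\to0$ in probability, upgrade to uniform convergence via Corollary~\ref{cor:Rockafellar_as}, then run the endgame of Theorem~\ref{thm:asympt_norm} with Skorohod coupling and Slutsky --- is identical. The one substantive difference is in the variance step. The paper expands $\var\bigl(\sum_I Z_{I,n}\bigr)$ directly, groups pairs $(I,J)$ by $\#(I\cap J)=j$, and must then show that each cross-moment $a_{j,n}=\E[Y_{I,n}Y_{J,n}]\to0$ for $j=1,\dots,k$ via the same monotonicity/monotone-convergence argument. You instead invoke the blanket Hoeffding bound $\var(U_n(h))\le (k/n)\var(h)$, which reduces everything to the single estimate $\E[Y_n(X)^2]\to0$ and spares you the combinatorics and the cross-terms. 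This is a genuine streamlining: the paper's route is more hands-on but reproves a special case of the bound you cite. Your remark about symmetrizing $g$ in its first $k$ arguments is also a point the paper leaves implicit but that is needed to apply Theorem~\ref{thm:CLT_U}, and your reading of the sign in the displayed limit (matching assumption~(iii) and Theorem~\ref{thm:asympt_norm}) is the intended one.
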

Note the extra $k$ factor in the limit in distribution.

\section{Conclusion and future directions} \label{sec:conclusion}

We have established the asymptotic properties of constrained $M$-estimators with a convex loss and a convex set of constraints, under minimal assumptions. In this work, asymptotics are only relative to the sample size $n$, while the dimension $d$ is kept fixed. 

In large dimensional problems, asymptotic theory can be approached from different angles. First, one may look at asymptotic distributions of low-dimensional projections of the $M$-estimator. For instance, in the context of linear regression, \cite{bellec2022asymptotic} proves the asymptotic normality of single coordinates of penalized $M$-estimators when the ratio $d/n$ goes to some fixed, positive constant. 
A second angle consists of looking at the full, joint distribution of (a rescaled version of) the $M$-estimator $\hat\theta_n$, and prove that, for some distribution $Q_d$ in $\R^d$, some specified distance (e.g., an integral probability metric) between the distribution of $\hat\theta_n$ and $Q_d$ goes to $0$ as $n,d\to\infty$ in a certain manner. When $\hat\theta_n$ is simply the sample mean of $X_1,\ldots,X_n$, such an approach has been studied and called \textit{high dimensional central limit theorems} \cite{chernozhukov2017central,fang2021high}. However, to the best of our knowledge, such results do not exist for other $M$-estimators, even with convex loss. 

In the context of $U$-estimators, we have also let the order $k$ of the $U$-process be fixed. However, it may be relevant to also let $k$ grow with the sample size (e.g., for median-of-means procedures). While the asymptotics of $U$-statistics with increasing order have been studied only recently \cite{diciccio2022clt}, we leave this direction for future work on $U$-estimation.

\bibliographystyle{plain}
\bibliography{Biblio}

\begin{thebibliography}{10}

\bibitem{aleksandorov1939almost}
Aleksandr~D. Aleksandrov.
\newblock Almost everywhere existence of the second differential of a convex
  function and some properties of convex functions.
\newblock {\em Leningrad Univ. Ann.}, 37:3--35, 1939.

\bibitem{altschuler2021averaging}
Jason Altschuler, Sinho Chewi, Patrik~R. Gerber, and Austin Stromme.
\newblock Averaging on the {B}ures-{W}asserstein manifold: dimension-free
  convergence of gradient descent.
\newblock {\em Advances in Neural Information Processing Systems},
  34:22132--22145, 2021.

\bibitem{amelunxen2014living}
Dennis Amelunxen, Martin Lotz, Michael~B McCoy, and Joel~A Tropp.
\newblock Living on the edge: phase transitions in convex programs with random
  data.
\newblock {\em Information and Inference: A Journal of the IMA}, 3(3):224--294,
  2014.

\bibitem{arcones1993limit}
Miguel~A. Arcones and Evarist Gin{\'e}.
\newblock Limit theorems for u-processes.
\newblock {\em The Annals of Probability}, pages 1494--1542, 1993.

\bibitem{bellec2024asymptotics}
Pierre~C Bellec and Takuya Koriyama.
\newblock Asymptotics of resampling without replacement in robust and logistic
  regression.
\newblock {\em arXiv preprint arXiv:2404.02070}, 2024.

\bibitem{bellec2022asymptotic}
Pierre~C. Bellec, Yiwei Shen, and Cun-Hui Zhang.
\newblock Asymptotic normality of robust {M}-estimators with convex penalty.
\newblock {\em Electronic Journal of Statistics}, 16(2):5591--5622, 2022.

\bibitem{bhatia2006riemannian}
Rajendra Bhatia and John Holbrook.
\newblock Riemannian geometry and matrix geometric means.
\newblock {\em Linear algebra and its applications}, 413(2-3):594--618, 2006.

\bibitem{breiman1996bagging}
Leo Breiman.
\newblock Bagging predictors.
\newblock {\em Machine learning}, 24(2):123--140, 1996.

\bibitem{breiman2001random}
Leo Breiman.
\newblock Random forests.
\newblock {\em Machine learning}, 45(1):5--32, 2001.

\bibitem{bronshteyn1975approximation}
Efim~M. Bronshteyn and Leonid~D. Ivanov.
\newblock The approximation of convex sets by polyhedra.
\newblock {\em Siberian Mathematical Journal}, 16(5):852--853, 1975.

\bibitem{chatterjee2014new}
Sourav Chatterjee.
\newblock A new perspective on least squares under convex constraint.
\newblock {\em The Annals of Statistics}, pages 2340--2381, 2014.

\bibitem{chernozhukov2017central}
Victor Chernozhukov, Denis Chetverikov, and Kengo Kato.
\newblock Central limit theorems and bootstrap in high dimensions.
\newblock {\em The Annals of Probability}, 45(4):2309--2352, 2017.

\bibitem{chow2003probability}
Yuan~Shih Chow and Henry Teicher.
\newblock {\em Probability theory: independence, interchangeability,
  martingales}.
\newblock Springer Science \& Business Media, 2003.

\bibitem{diciccio2022clt}
Cyrus DiCiccio and Joseph Romano.
\newblock {CLT} for {$U$}-statistics with growing dimension.
\newblock {\em Statistica Sinica}, 32(1):323--344, 2022.

\bibitem{fang2021high}
Xiao Fang and Yuta Koike.
\newblock High-dimensional central limit theorems by {S}tein’s method.
\newblock {\em The Annals of Applied Probability}, 31(4):1660--1686, 2021.

\bibitem{federer1969geometric}
Herbert Federer.
\newblock Geometric measure theory.
\newblock {\em Springer}, 1969.

\bibitem{fernholz1983mises}
Luisa~Turrin Fernholz.
\newblock Von {M}ises calculus for statistical functionals.
\newblock {\em Lecture Notes in Statistics}, 1983.

\bibitem{geyer1994asymptotics}
Charles~J. Geyer.
\newblock On the asymptotics of constrained {$M$}-estimation.
\newblock {\em The Annals of statistics}, pages 1993--2010, 1994.

\bibitem{haberman1989concavity}
Shelby~J. Haberman.
\newblock Concavity and estimation.
\newblock {\em The Annals of Statistics}, pages 1631--1661, 1989.

\bibitem{henze2024asymptotic}
Norbert Henze.
\newblock {\em Asymptotic Stochastics}, volume~10.
\newblock Springer, 2024.

\bibitem{himmelberg1982measurable}
Charles~J. Himmelberg, Thiruvenkatachari Parthasarathy, and F.S. Van~Vleck.
\newblock On measurable relations.
\newblock {\em Fundamenta mathematicae}, 61:161--167, 1982.

\bibitem{hoeffding1992class}
Wassily Hoeffding.
\newblock A class of statistics with asymptotically normal distribution.
\newblock In {\em Breakthroughs in statistics: Foundations and basic theory},
  pages 308--334. Springer, 1992.

\bibitem{holmes1973smoothness}
Richard~B. Holmes.
\newblock Smoothness of certain metric projections on {H}ilbert space.
\newblock {\em Transactions of the American Mathematical Society}, 184:87--100,
  1973.

\bibitem{honorio2014unified}
Jean Honorio and Tommi Jaakkola.
\newblock A unified framework for consistency of regularized loss minimizers.
\newblock In {\em International Conference on Machine Learning}, pages
  136--144. PMLR, 2014.

\bibitem{kallenberg1997foundations}
Olav Kallenberg.
\newblock {\em Foundations of modern probability}.
\newblock Springer, 1997.

\bibitem{knight2006asymptotic}
Keith Knight.
\newblock Asymptotic theory for {$M$}-estimators of boundaries.
\newblock In {\em The Art of Semiparametrics}, pages 1--21. Springer, 2006.

\bibitem{knight2000asymptotics}
Keith Knight and Wenjiang Fu.
\newblock Asymptotics for lasso-type estimators.
\newblock {\em Annals of statistics}, pages 1356--1378, 2000.

\bibitem{koltchinskii1994bahadur}
V~Koltchinskii.
\newblock Bahadur-{K}iefer approximation for spatial quantiles.
\newblock In {\em Probability in Banach Spaces, 9}, pages 401--415. Springer,
  1994.

\bibitem{koriyama2024precise}
Takuya Koriyama, Pratik Patil, Jin-Hong Du, Kai Tan, and Pierre~C. Bellec.
\newblock Precise asymptotics of bagging regularized {$M$}-estimators.
\newblock {\em arXiv preprint arXiv:2409.15252}, 2024.

\bibitem{kroshnin2021statistical}
Alexey Kroshnin, Vladimir Spokoiny, and Alexandra Suvorikova.
\newblock Statistical inference for {B}ures-{W}asserstein barycenters.
\newblock {\em The Annals of Applied Probability}, 31(3):1264--1298, 2021.

\bibitem{lecam1970assumptions}
Lucien Le~Cam.
\newblock On the assumptions used to prove asymptotic normality of maximum
  likelihood estimates.
\newblock {\em The Annals of Mathematical Statistics}, 41(3):802--828, 1970.

\bibitem{lecue2020robust}
Guillaume Lecué and Matthieu Lerasle.
\newblock Robust machine learning by median-of-means: Theory and practice.
\newblock {\em The Annals of Statistics}, 48(2):906--931, 2020.

\bibitem{lerasleandoliveira2011}
Matthieu Lerasle and Roberto~I. Oliveira.
\newblock Robust empirical mean estimators.
\newblock {\em arXiv preprint arXiv:1112.3914}, 2011.

\bibitem{li2024inference}
Jessie Li.
\newblock Inference for constrained extremum estimators.
\newblock {\em Unpublished manuscript}, 2024.

\bibitem{li2015geometric}
Yen-Huan Li, Ya-Ping Hsieh, Nissim Zerbib, and Volkan Cevher.
\newblock A geometric view on constrained {$M$}-estimators.
\newblock {\em arXiv preprint arXiv:1506.08163}, 2015.

\bibitem{minsker2015geometric}
Stanislav Minsker.
\newblock Geometric median and robust estimation in {B}anach spaces.
\newblock {\em Bernoulli}, 21(4):2308, 2015.

\bibitem{minsker2023u}
Stanislav Minsker.
\newblock U-statistics of growing order and sub-{G}aussian mean estimators with
  sharp constants.
\newblock {\em Mathematical statistics and learning}, 7(1):1--39, 2023.

\bibitem{molchanov2005theory}
Ilya Molchanov.
\newblock {\em Theory of random sets}.
\newblock Springer, 2005.

\bibitem{nemirovskij1983problem}
Arkadij~S. Nemirovskij and David~B. Yudin.
\newblock Problem complexity and method efficiency in optimization.
\newblock 1983.

\bibitem{niemiro1992asymptotics}
Wojciech Niemiro.
\newblock Asymptotics for {M}-estimators defined by convex minimization.
\newblock {\em The Annals of Statistics}, pages 1514--1533, 1992.

\bibitem{nolan1987u}
Deborah Nolan and David Pollard.
\newblock U-processes: rates of convergence.
\newblock {\em The Annals of Statistics}, pages 780--799, 1987.

\bibitem{nolan1988functional}
Deborah Nolan and David Pollard.
\newblock Functional limit theorems for {$U$}-processes.
\newblock {\em The Annals of Probability}, 16(3):1291--1298, 1988.

\bibitem{noll1995directional}
Dominikus Noll.
\newblock Directional differentiability of the metric projection in {H}ilbert
  space.
\newblock {\em Pacific Journal of Mathematics}, 170(2):567--592, 1995.

\bibitem{oja1983descriptive}
Hannu Oja.
\newblock Descriptive statistics for multivariate distributions.
\newblock {\em Statistics \& Probability Letters}, 1(6):327--332, 1983.

\bibitem{oja1985asymptotic}
Hannu Oja and Ahti Niinimaa.
\newblock Asymptotic properties of the generalized median in the case of
  multivariate normality.
\newblock {\em Journal of the Royal Statistical Society. Series B
  (Methodological)}, pages 372--377, 1985.

\bibitem{plan2017high}
Yaniv Plan, Roman Vershynin, and Elena Yudovina.
\newblock High-dimensional estimation with geometric constraints.
\newblock {\em Information and Inference: A Journal of the IMA}, 6(1):1--40,
  2017.

\bibitem{pollard1991asymptotics}
David Pollard.
\newblock Asymptotics for least absolute deviation regression estimators.
\newblock {\em Econometric Theory}, 7(2):186--199, 1991.

\bibitem{rockafellar1970convex}
R.~Tyrrell Rockafellar.
\newblock {\em Convex analysis}, volume~18.
\newblock Princeton university press, 1970.

\bibitem{schneider2014convex}
Rolf Schneider.
\newblock {\em Convex bodies: the Brunn--Minkowski theory}.
\newblock Number 151. Cambridge university press, 2014.

\bibitem{shapiro1987differentiability}
Alexander Shapiro.
\newblock On differentiability of metric projections in {$\R^n$}. {I}.
  {B}oundary case.
\newblock {\em Proceedings of the American Mathematical Society},
  99(1):123--128, 1987.

\bibitem{shapiro1994directionally}
Alexander Shapiro.
\newblock Directionally nondifferentiable metric projection.
\newblock {\em Journal of optimization theory and applications},
  81(1):203--204, 1994.

\bibitem{shapiro2016differentiability}
Alexander Shapiro.
\newblock Differentiability properties of metric projections onto convex sets.
\newblock {\em Journal of Optimization Theory and Applications},
  169(3):953--964, 2016.

\bibitem{sherman1994u}
Robert~P. Sherman.
\newblock U-processes in the analysis of a generalized semiparametric
  regression estimator.
\newblock {\em Econometric theory}, 10(2):372--395, 1994.

\bibitem{takatsu2011wasserstein}
Asuka Takatsu.
\newblock Wasserstein geometry of {G}aussian measures.
\newblock {\em Osaka Journal of Mathematics}, 48(4):1005--1026, 2011.

\bibitem{van2000asymptotic}
Aad~W Van~der Vaart.
\newblock {\em Asymptotic statistics}, volume~3.
\newblock Cambridge university press, 2000.

\bibitem{van1996weak}
Aad~W Van Der~Vaart and Jon~A Wellner.
\newblock {\em Weak convergence and empirical processes: with applications to
  statistics}.
\newblock Springer, 1996.

\bibitem{ZARANTONELLO1971237}
Eduardo~H. Zarantonello.
\newblock Projections on convex sets in {H}ilbert space and spectral theory:
  Part {I}. {P}rojections on convex sets: Part {II}. {S}pectral theory.
\newblock In {\em Contributions to Nonlinear Functional Analysis}, pages
  237--424. Academic Press, 1971.

\bibitem{zuo2000general}
Yijun Zuo and Robert Serfling.
\newblock General notions of statistical depth function.
\newblock {\em Annals of statistics}, pages 461--482, 2000.

\end{thebibliography}

\appendix

\section{On convex functions and their subgradients} \label{App:convex}

In this section, we gather useful properties on subgradients of convex functions. Most of these properties are classical and we include their proofs for completeness. 

\begin{lemma} \label{lemma:subgradients_interior}
	Let $G_0\subseteq \R^d$ be a convex set with non-empty interior and $f:G_0\to\R$ be a convex function. Let $t_0\in\inter(G_0)$ and $\varepsilon>0$ be such that $B(t_0,\varepsilon)\subseteq \inter(G_0)$. Then, for all vectors $u\in\R^d$, $u\in\partial f(t_0)$ if and only if $f(t)\geq f(t_0)+u^\top (t-t_0)$, for all $t\in B(t_0,\varepsilon)$. 
\end{lemma}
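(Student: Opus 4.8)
The statement is the standard "local characterization of subgradients" for convex functions: subgradients at an interior point $t_0$ are detected by the subgradient inequality holding only on a small ball $B(t_0,\varepsilon)$ around $t_0$, rather than on all of $G_0$. The forward implication is immediate from the definition of $\partial f(t_0)$, since $B(t_0,\varepsilon)\subseteq\inter(G_0)\subseteq G_0$: if $f(t)\geq f(t_0)+u^\top(t-t_0)$ for all $t\in G_0$, then in particular it holds for all $t\in B(t_0,\varepsilon)$. So the content is entirely in the reverse implication.

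For the reverse implication, suppose $f(t)\geq f(t_0)+u^\top(t-t_0)$ for all $t\in B(t_0,\varepsilon)$, and fix an arbitrary $t\in G_0$; I want to show $f(t)\geq f(t_0)+u^\top(t-t_0)$. The plan is to exploit convexity along the segment from $t_0$ toward $t$. Pick $\lambda\in(0,1]$ small enough that $t_\lambda:=t_0+\lambda(t-t_0)\in B(t_0,\varepsilon)$; this is possible because $\|t_\lambda-t_0\|=\lambda\|t-t_0\|$, so any $\lambda\le\varepsilon/\|t-t_0\|$ (and $\lambda\le1$) works (the case $t=t_0$ being trivial). Convexity of $f$ gives $f(t_\lambda)\leq(1-\lambda)f(t_0)+\lambda f(t)$. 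On the other hand, the local hypothesis applied at $t_\lambda\in B(t_0,\varepsilon)$ gives $f(t_\lambda)\geq f(t_0)+u^\top(t_\lambda-t_0)=f(t_0)+\lambda\, u^\top(t-t_0)$. Combining the two inequalities: $f(t_0)+\lambda\,u^\top(t-t_0)\leq(1-\lambda)f(t_0)+\lambda f(t)$, i.e. $\lambda\,u^\top(t-t_0)\leq-\lambda f(t_0)+\lambda f(t)$, and dividing by $\lambda>0$ yields $u^\top(t-t_0)\leq f(t)-f(t_0)$, which is exactly the subgradient inequality at $t$. Since $t\in G_0$ was arbitrary, $u\in\partial f(t_0)$.

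There is essentially no obstacle here — the only point requiring a word of care is that $f$ is real-valued and finite on $G_0$ (which is part of the paper's standing convention on convex functions), so all the quantities $f(t_0), f(t), f(t_\lambda)$ are finite and the arithmetic manipulations are legitimate; and that $t_\lambda$ genuinely lies in $G_0$, which holds since $t_0,t\in G_0$ and $G_0$ is convex (indeed $t_\lambda\in B(t_0,\varepsilon)\subseteq\inter(G_0)$ by the choice of $\lambda$). No empirical-process or measurability machinery is needed; it is a one-line convexity argument in each direction.
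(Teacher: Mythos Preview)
Your proof is correct and follows essentially the same approach as the paper: both argue the nontrivial reverse implication by taking a point $t_\lambda=t_0+\lambda(t-t_0)$ on the segment from $t_0$ toward $t$ that lies in $B(t_0,\varepsilon)$, applying the local subgradient inequality there, and combining it with the convexity bound $f(t_\lambda)\leq(1-\lambda)f(t_0)+\lambda f(t)$ before dividing by $\lambda$. The only cosmetic difference is that the paper fixes $\lambda=\varepsilon/\|t-t_0\|$ after disposing of the case $t\in B(t_0,\varepsilon)$, whereas you allow any sufficiently small $\lambda$.
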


\begin{proof}
	The left-right implication trivially follows the definition of subgradients. Assume now that a vector $u\in\R^d$ satisfies the right property. Fix $t\in G_0$ be arbitrary and let us show that $f(t)\geq f(t_0)+u^\top(t-t_0)$. This is clear by assumption if $t\in B(t_0,\varepsilon)$, so let us assume that $t\notin B(t_0,\varepsilon)$. Let $\lambda=\varepsilon/\|t-t_0\|\in (0,1)$ and $t_\lambda:=t_0+\lambda (t-t_0)\in B(t_0,\varepsilon)$. Then, by assumption, $f(t_\lambda)\geq f(t_0)+u^\top(t_\lambda-t_0)=f(t_0)+\lambda u^\top(t-t_0)$. Moreover, by convexity of $h$, $f(t_\lambda)\leq (1-\lambda)f(t_0)+\lambda f(t)$. Rearranging yields the desired inequality. 
\end{proof}

\begin{lemma} \label{lem:prop_subgrad}
	Let $G_0\subseteq\R^d$ be a convex set and $f:G_0\to\R$ be any convex function. Then, for all $t_0\in G_0$, $\partial f(t_0)$ is closed. Moreover, if $t_0\in\inter(G_0)$, then, $\partial f(t_0)$ is non-empty and compact.
\end{lemma}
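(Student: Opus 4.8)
The plan is to prove the three assertions in order: closedness of $\partial f(t_0)$ for any $t_0\in G_0$, then non-emptiness and boundedness (hence compactness, by closedness) when $t_0\in\inter(G_0)$.

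First, for closedness: fix $t_0\in G_0$ and observe that
$$\partial f(t_0)=\bigcap_{t\in G_0}\{u\in\R^d:\ u^\top(t-t_0)\leq f(t)-f(t_0)\},$$
which is an intersection of closed half-spaces (or the whole space, when $t=t_0$), hence closed. This requires no interiority and no continuity of $f$.

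Next, assume $t_0\in\inter(G_0)$. For non-emptiness I would invoke the standard supporting hyperplane argument: the epigraph $\mathrm{epi}(f)=\{(t,s)\in G_0\times\R:\ s\geq f(t)\}$ is convex, and since $t_0\in\inter(G_0)$ the point $(t_0,f(t_0))$ lies on its boundary but $(t_0,f(t_0)-\varepsilon)$ does not, so there is a supporting hyperplane to $\mathrm{epi}(f)$ at $(t_0,f(t_0))$; one checks its normal cannot be purely horizontal (again using $t_0\in\inter(G_0)$, which forces $f$ to be finite, hence locally bounded above, on a neighborhood of $t_0$), and after normalizing the vertical component, the horizontal part of the normal is a subgradient. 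Alternatively, since this is a classical fact, one may simply cite \cite[Theorem 23.4]{rockafellar1970convex}. For boundedness: pick $\varepsilon>0$ with $B(t_0,\varepsilon)\subseteq\inter(G_0)$. A convex function is locally bounded on the interior of its domain, so let $L:=\sup_{t\in B(t_0,\varepsilon)}f(t)<\infty$ (finiteness follows e.g.\ from \cite[Theorem 10.1]{rockafellar1970convex}, or from a direct argument bounding $f$ on a simplex containing $B(t_0,\varepsilon)$). Then for any $u\in\partial f(t_0)$ and any unit vector $v$, taking $t=t_0+\varepsilon v\in B(t_0,\varepsilon)$ in the subgradient inequality gives $\varepsilon\, u^\top v=u^\top(t-t_0)\leq f(t)-f(t_0)\leq L-f(t_0)$, so $u^\top v\leq (L-f(t_0))/\varepsilon$; taking the supremum over unit $v$ yields $\|u\|\leq (L-f(t_0))/\varepsilon$. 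Hence $\partial f(t_0)$ is bounded, and being also closed, it is compact.

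The only genuinely nontrivial ingredient is non-emptiness, i.e.\ the supporting-hyperplane/separation argument (or the cited theorem); the closedness and boundedness parts are elementary manipulations of the defining inequality. I would keep the write-up short, citing \cite{rockafellar1970convex} for the existence of a subgradient at an interior point and spelling out only the closedness and boundedness estimates.
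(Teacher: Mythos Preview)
Your proposal is correct and follows essentially the same approach as the paper: the paper also uses the supporting-hyperplane-to-the-epigraph argument for non-emptiness and the same plug-in $t=t_0+\varepsilon v$ estimate for boundedness. The only cosmetic difference is in closedness, where the paper passes to the limit along a convergent sequence of subgradients while you write $\partial f(t_0)$ as an intersection of closed half-spaces; both are one-line arguments.
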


\begin{proof}
\,
\paragraph{\underline{$\partial f(t_0)$ is closed}.} 

	Fix $t_0\in G_0$ and let $(u_n)_{n\geq 1}$ be a sequence of subgradients of $h$ at $t_0$, assumed to converge to some $u\in\R^d$. For all $t\in G_0$ and for all $n\geq 1$, $f(t)\geq f(t_0)+u_n^\top (t-t_0)$. Taking the limit as $n\to\infty$ shows that $u\in\partial f(t_0)$. Hence, $\partial f(t_0)$ is closed.
\vspace{2mm}

	\paragraph{\underline{$\partial f(t_0)$ is nonempty if $t_0\in\inter(G_0)$}.}  
	Let $F=\{(t,y)\in G_0\times \R: f(t)\leq y\}$ be the epigraph of $h$, which is a convex subset of $\R^{d+1}$. Let $t_0\in\inter(G_0)$. The point $(t_0,f(t_0))$ is a boundary point of $F$ since $(t_0,f(t_0)-\varepsilon)\notin F$ for all $\varepsilon>0$. Let $H\subseteq\R^{d+1}$ be a supporting hyperplane of $F$ at this point and let $v=(v_1,v_2)$ be a (non-zero) outward pointing normal vector to $H$, where $v_1\in\R^d$ and $v_2\in\R$. This simply means that for all $(t,y)\in F$, the scalar product between $v$ and $(t,y)-(t_0,f(t_0))$ is non-positive. That is, 
\begin{equation} \label{proof:3.1}
v_1^\top (t-t_0)+v_2(y-f(t_0))\leq 0
\end{equation} 
for all $t\in G_0$ and $y\geq f(t)$.
	
	Let us show that necessarily, $v_2<0$. First, assume, for the sake of contradiction, that $v_2= 0$. Then, $v_1\neq 0$, because we have assumed that $v=(v_1,v_2)$ is non-zero. Since $t_0\in\inter(G_0)$, there exists $t\in G_0$ such that $v_2^\top(t-t_0)>0$ (take $t=t_0+\varepsilon v_2$ for any small enough $\varepsilon>0$), which contradicts \eqref{proof:3.1}. Hence, $v_2\neq 0$. Now, fixing any $t\in G_0$ and taking $y\to\infty$ in \eqref{proof:3.1} shows that $v_2$ must be negative. Now, taking $y=f(t)$ in  \eqref{proof:3.1} yields, for all $t\in G_0$,
\begin{equation*}
	f(t)\geq f(t_0)-v_2^{-1}v_1^\top (t-t_0).
\end{equation*}
That is, $-v_2^{-1}v_1$ is a subgradient of $h$ at $t_0$, so $\partial f(t_0)\neq\emptyset$. 
\vspace{2mm}
	\paragraph{\underline{$\partial f(t_0)$ is compact}.} It is now enough to check that for $t_0\in \inter(G_0)$, $\partial f(t_0)$ is bounded. Fix $\varepsilon>0$ such that $B(t_0,\varepsilon)\subseteq \inter(G_0)$. Since $h$ is continuous on $\inter(G_0)$, it is bounded on the compact set $B(t_0,\varepsilon)$. Let $M:=\max_{t\in B(t_0,\varepsilon)}f(t)$. Let $u\in \partial f(t_0)$ and assume that $u\neq 0$. Then, letting $t=t_0+\varepsilon u/\|u\|\in B(t_0,\varepsilon)$, the definition of subgradients yields that $M-f(t_0)\geq f(t)-f(t_0)\geq u^\top (t-t_0)=\varepsilon\|u\|$. Hence, $\partial f(t_0)\subseteq B(0,(M-f(t_0))/\varepsilon)$.
	
\end{proof}

If $t$ is a boundary point of $G_0$, then $\partial f(t)$ might be empty. This is the case, for instance, for $G_0=\R^+$ and $f:t\in\R^+\mapsto -\sqrt t$, which does not have any subgradient at $0$.

\begin{lemma} \label{lem:diff_subgrad}
	Let $G_0\subseteq\R^d$ be a convex set and $f:G_0\to\R$ be a convex function. Let $t_0\in\inter(G_0)$ and assume that $h$ is differentiable at $t_0$. Then, $\partial f(t_0)=\{\nabla f(t_0)\}$.
\end{lemma}

That is, if $f$ is differentiable at some interior point of its domain, then its gradient is the only subgradient at that point. This property does not hold if $t_0$ is a boundary point. For instance, let $f:t\in\R^+\mapsto 0$, which is convex. Then, while it is differentiable at $0$, $\partial f(0)=\R^-$. 

\begin{proof}
	Let $u\in \partial f(t_0)$, where $t_0\in\inter(G_0)$. Then, for all $v\in\R^d$ and all small enough $\varepsilon>0$,
$$f(t_0+\varepsilon v)-f(t_0)\geq \varepsilon u^\top v.$$
Dividing by $\varepsilon$ and taking the limit as $\varepsilon\to 0$ yields
$$\nabla f(t_0)^\top v\geq u^\top v.$$
Since this must hold for any $v\in\R^d$, one readily obtains that $u=\nabla f(t_0)$. 
\end{proof}

\begin{lemma} \label{lem:seq_subgrad}
Let $G_0\subseteq \R^d$ be a convex set and $f:G_0\to\R$ be a convex function. Let $t_0\in G_0$ and assume that $h$ is differentiable at $t_0$. Let $(t_n)_{n\geq 1}$ be any sequence of points in $G_0$ converging to $t_0$. For all $n\geq 1$, let $u_n\in \partial f(t_n)$. Then, $u_n\xrightarrow[n\to\infty]{}\nabla f(t_0)$.
\end{lemma}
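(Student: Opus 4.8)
The plan is to argue by contradiction: I would show that the sequence $(u_n)$ is bounded, extract a convergent subsequence, pass to the limit in the subgradient inequality to identify the limit as an element of $\partial f(t_0)$, and then invoke Lemma~\ref{lem:diff_subgrad} to force that limit to be $\nabla f(t_0)$. Throughout I would work with $n$ large enough that $t_n$ lies in a fixed compact ball $\overline B(t_0,2\varepsilon)\subseteq\inter(G_0)$, which is legitimate in the situation in which the lemma is applied (namely at an interior point $t_0$).

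First I would establish $\sup_n\|u_n\|<\infty$, imitating the boundedness part of the proof of Lemma~\ref{lem:prop_subgrad}. Since $f$ is convex it is continuous on $\inter(G_0)$, hence bounded on the compact set $\overline B(t_0,2\varepsilon)$, say $m\le f\le M$ there; for $u_n\ne 0$, plugging $t=t_n+\varepsilon u_n/\|u_n\|\in\overline B(t_0,2\varepsilon)$ into $f(t)\ge f(t_n)+u_n^\top(t-t_n)$ yields $\varepsilon\|u_n\|\le M-m$. Now suppose, for contradiction, that $u_n\not\to\nabla f(t_0)$: then some subsequence stays at distance $\ge\delta>0$ from $\nabla f(t_0)$, and by boundedness a further subsequence $(u_{n_k})_k$ converges to some $u$ with $u\ne\nabla f(t_0)$. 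For each fixed $t\in G_0$, letting $k\to\infty$ in $f(t)\ge f(t_{n_k})+u_{n_k}^\top(t-t_{n_k})$ and using $f(t_{n_k})\to f(t_0)$ (continuity of $f$ on $\inter(G_0)$) together with $u_{n_k}^\top(t-t_{n_k})\to u^\top(t-t_0)$ gives $f(t)\ge f(t_0)+u^\top(t-t_0)$. Hence $u\in\partial f(t_0)$, which by Lemma~\ref{lem:diff_subgrad} (applicable since $f$ is differentiable at the interior point $t_0$) equals $\{\nabla f(t_0)\}$; so $u=\nabla f(t_0)$, a contradiction.

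The only step requiring any real thought is the uniform boundedness of $(u_n)$; once that is in hand, everything reduces to taking limits in the subgradient inequality. The mild subtlety to keep in mind is the interior-versus-boundary distinction: both the boundedness argument and the statement itself can fail if $t_0$ is a boundary point of $G_0$ and $t_n=t_0$ for infinitely many $n$, so it matters that this lemma is invoked only at interior points (as it is in the proof of Theorem~\ref{thm:asympt_norm}).
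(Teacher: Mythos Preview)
Your proof is correct and follows essentially the same route as the paper: bound $(u_n)$ by the compactness argument of Lemma~\ref{lem:prop_subgrad}, pass to the limit in the subgradient inequality along a convergent subsequence to get $u\in\partial f(t_0)$, and conclude via Lemma~\ref{lem:diff_subgrad}. The paper phrases the subsequence step as ``every convergent subsequence has limit $\nabla f(t_0)$'' rather than by contradiction, but this is a purely cosmetic difference; your remark that the argument implicitly needs $t_0\in\inter(G_0)$ is a correct observation (and one small slip: for $t_n+\varepsilon u_n/\|u_n\|\in\overline B(t_0,2\varepsilon)$ you need $t_n\in\overline B(t_0,\varepsilon)$, not just $\overline B(t_0,2\varepsilon)$).
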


\begin{proof}
	Let $\varepsilon>0$ be such that $B(t_0,\varepsilon)\subseteq\inter(G_0)$. A similar argument as in the proof of compactness of $\partial f(t_0)$ in Lemma~\ref{lem:prop_subgrad}  yields that $\bigcup_{t\in B(t_0,\varepsilon)}\partial f(t)$ is bounded, so the sequence $(u_n)_{n\geq 1}$ must be bounded. Therefore, it is sufficient to prove that any converging subsequence must converge to $\nabla f(t_0)$. Since we could simply relabel the indices of the sequence, let us simply assume that $u_n\xrightarrow[n\to\infty]{} u$ for some $u\in\R^d$. For all $t\in G_0$ and all $n\geq 1$, 
$$f(t)\geq f(t_n)+u_n^\top (t-t_n).$$
Recall that $h$ is continous on $\inter(G_0)$, so taking the limit as $n\to\infty$ in the previous display gives
$$f(t)\geq f(t_0)+u^\top (t-t_0),$$
so $u\in\partial f(t_0)$. Lemma~\ref{lem:diff_subgrad} implies that $u=\nabla f(t_0)$.
\end{proof}

The following lemma is more general and will allow to connect subgradients and directional derivatives.

\begin{lemma} \label{lem:subgrad_subseq}
	Let $G_0\subseteq \R^d$ be a convex set and $f:G_0\to\R$ be a convex function. Let $x\in\inter(G_0)$ and let $(x_n)_{n\geq 1}$ be a sequence of points in $\inter(G_0)$ converging to $x$. For each $n\geq 1$, let $u_n\in\partial f(x_n)$. Then, the sequence $(u_n)_{n\geq 1}$ is bounded and any of its converging subsequences converges to some element of $\partial f(x)$. 
\end{lemma}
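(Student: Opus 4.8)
The plan is to prove boundedness first, then characterize subsequential limits.

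\textbf{Boundedness.} Since $x\in\inter(G_0)$, fix $\varepsilon>0$ with $B(x,2\varepsilon)\subseteq\inter(G_0)$; then $x_n\in B(x,\varepsilon)\subseteq\inter(G_0)$ for all large $n$, and for those $n$ we even have $B(x_n,\varepsilon)\subseteq B(x,2\varepsilon)\subseteq\inter(G_0)$. By continuity of $f$ on $\inter(G_0)$, the quantity $M:=\sup_{t\in B(x,2\varepsilon)}f(t)-\inf_{t\in B(x,2\varepsilon)}f(t)$ is finite. For $u_n\in\partial f(x_n)$ with $u_n\neq 0$, taking $t=x_n+\varepsilon u_n/\|u_n\|\in B(x_n,\varepsilon)\subseteq B(x,2\varepsilon)$ in the subgradient inequality gives $\varepsilon\|u_n\|=u_n^\top(t-x_n)\leq f(t)-f(x_n)\leq M$, so $\|u_n\|\leq M/\varepsilon$. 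Thus $(u_n)_{n\geq 1}$ is eventually bounded, hence bounded (after adjusting finitely many terms, or noting that only the tail matters for subsequential limits), exactly as in the compactness argument of Lemma~\ref{lem:prop_subgrad}.

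\textbf{Subsequential limits lie in $\partial f(x)$.} This is essentially the argument already used in Lemma~\ref{lem:seq_subgrad}, but without assuming differentiability at $x$. Let $(u_{n_k})_{k\geq 1}$ be a convergent subsequence, $u_{n_k}\to u$. For every $t\in G_0$ and every $k$, the subgradient inequality at $x_{n_k}$ reads $f(t)\geq f(x_{n_k})+u_{n_k}^\top(t-x_{n_k})$. Since $f$ is continuous at $x\in\inter(G_0)$, we have $f(x_{n_k})\to f(x)$, and $u_{n_k}^\top(t-x_{n_k})\to u^\top(t-x)$. Passing to the limit gives $f(t)\geq f(x)+u^\top(t-x)$ for all $t\in G_0$, i.e.\ $u\in\partial f(x)$. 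Combining the two parts yields the statement.

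\textbf{Main obstacle.} There is no serious obstacle here: both pieces are direct adaptations of arguments already present in Lemmas~\ref{lem:prop_subgrad} and \ref{lem:seq_subgrad}. The only point requiring a little care is that boundedness of $(u_n)_{n\geq 1}$ is only guaranteed for large $n$ (once $x_n$ is well inside $\inter(G_0)$); this is harmless since boundedness of a sequence is unaffected by modifying finitely many terms, and in any case only the tail is relevant for identifying subsequential limits. One should also be mindful to invoke continuity of $f$ at $x$ (valid because $x\in\inter(G_0)$, by \cite[Theorem 10.1]{rockafellar1970convex}) rather than differentiability, which is not assumed in this lemma.
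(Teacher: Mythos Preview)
Your proof is correct and follows essentially the same approach as the paper. The only cosmetic difference is that for boundedness the paper invokes local Lipschitzness of convex functions directly (citing \cite[Theorem 1.5.3]{schneider2014convex}), whereas you rederive the uniform subgradient bound by the explicit argument of Lemma~\ref{lem:prop_subgrad}; the subsequential-limit step is identical.
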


\begin{proof}
	Let $\varepsilon>0$ satisfying $B(x,\varepsilon)\subseteq \inter(G_0)$. Without loss of generality, let us assume that $x_n\in B(x,\varepsilon)$ for all $n\geq 1$. Convexity of $f$ yields that it is locally Lipschitz \cite[Theorem 1.5.3]{schneider2014convex}, and hence, there is some $L>0$ such that $\|u\|\leq L$ for all $u\in \partial f(y), y\in B(x,\varepsilon)$. Hence, $(u_n)_{n\geq 1}$ is bounded. 
	
Now, consider a converging subsequence of $(u_n)_{n\geq 1}$ which, up to renumbering, we still denote by $(u_n)_{n\geq 1}$. Let $u$ be its limit. Then, for all $y\in G_0$ and $n\geq 1$, 
$$f(y)\geq f(x_n)+u_n^\top (y-x_n).$$
Since $f$ is continuous at $x$, all terms have a limit as $n\to\infty$ and we obtain, for all $y\in G_0$, 
$$f(y)\geq f(x)+u^\top (y-x).$$
That is, $u\in\partial f(x)$.
\end{proof}

\begin{lemma} \label{lem:dirder_support}
	Let $G_0\subseteq \R^d$ be a convex set and $f:G_0\to\R$ be a convex function. Let $x\in\inter(G_0)$. Then, for all $t\in\R^d$,
	$$\diff^+f(x;t)=h_{\partial f(x)}(t)$$
where we recall that $h_{\partial f(x)}$ is the support function of $\partial f(x)$. 
\end{lemma}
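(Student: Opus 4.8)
The plan is to prove the two inequalities $\diff^+f(x;t)\ge h_{\partial f(x)}(t)$ and $\diff^+f(x;t)\le h_{\partial f(x)}(t)$ separately, after first checking that $p:=\diff^+f(x;\cdot)$ is a well-defined, finite-valued sublinear function. For the latter I would use the standard monotonicity of difference quotients: for fixed $t$ and $0<\varepsilon'<\varepsilon$ small enough that $x+\varepsilon t\in G_0$, convexity of $\lambda\mapsto f(x+\lambda t)$ gives $\varepsilon'^{-1}(f(x+\varepsilon' t)-f(x))\le\varepsilon^{-1}(f(x+\varepsilon t)-f(x))$, so the quotient is nondecreasing in $\varepsilon$; since $x\in\inter(G_0)$ it is also bounded below uniformly for small $\varepsilon$ (again by convexity, comparing with a fixed quotient in the direction $-t$), so the limit $p(t)$ exists and is finite, equals the infimum of the quotients over small $\varepsilon>0$, and in particular $p(v)\le f(x+v)-f(x)$ whenever $x+v\in G_0$ (take $\varepsilon=1$, using that $[x,x+v]\subseteq G_0$). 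A short computation then shows $p(\lambda t)=\lambda p(t)$ for $\lambda\ge0$ (substitute $\delta=\lambda\varepsilon$) and $p(t+s)\le p(t)+p(s)$ (from $f(x+\varepsilon(t+s))\le\tfrac12 f(x+2\varepsilon t)+\tfrac12 f(x+2\varepsilon s)$, divide by $\varepsilon$ and send $\varepsilon\to0$); thus $p$ is sublinear with $p(0)=0$.

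The easy direction: for any $u\in\partial f(x)$ and small $\varepsilon>0$, the subgradient inequality gives $\varepsilon^{-1}(f(x+\varepsilon t)-f(x))\ge u^\top t$, and letting $\varepsilon\to0$ yields $\diff^+f(x;t)\ge u^\top t$; taking the supremum over $u\in\partial f(x)$ gives $\diff^+f(x;t)\ge h_{\partial f(x)}(t)$.

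For the reverse inequality — the main point — I would fix $t$ and construct a single $u\in\partial f(x)$ with $u^\top t=\diff^+f(x;t)$. Define $\ell$ on the line $\R t$ by $\ell(\lambda t)=\lambda\,\diff^+f(x;t)$; positive homogeneity of $p$ handles $\lambda\ge0$, and for $\lambda<0$ the inequality $\ell(\lambda t)\le p(\lambda t)$ reduces to $p(t)+p(-t)\ge p(0)=0$, which holds by subadditivity. Since $p$ is sublinear on all of $\R^d$, the Hahn--Banach theorem extends $\ell$ to a linear functional $v\mapsto u^\top v$ on $\R^d$ with $u^\top v\le p(v)=\diff^+f(x;v)$ for every $v$. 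Combining with the last observation of the first paragraph, $u^\top v\le f(x+v)-f(x)$ whenever $x+v\in G_0$, i.e.\ $f(y)\ge f(x)+u^\top(y-x)$ for all $y\in G_0$, so $u\in\partial f(x)$; and $u^\top t=\ell(t)=\diff^+f(x;t)$, whence $h_{\partial f(x)}(t)\ge u^\top t=\diff^+f(x;t)$.

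The only delicate step is this Hahn--Banach (equivalently, supporting-hyperplane) argument that produces a subgradient attaining the directional derivative in the prescribed direction; everything else is elementary convexity bookkeeping. Alternatively one could invoke the standard duality from convex analysis between a sublinear function and the support function of its subdifferential at the origin, but carrying out the one-line extension keeps the proof self-contained.
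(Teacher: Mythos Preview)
Your proof is correct. The easy direction (that $\diff^+f(x;t)\ge h_{\partial f(x)}(t)$) is identical to the paper's. For the reverse inequality, however, you take a genuinely different route. You establish that $p=\diff^+f(x;\cdot)$ is finite and sublinear, then invoke Hahn--Banach to extend the linear functional $\lambda t\mapsto \lambda\,p(t)$ to a linear form $u^\top(\cdot)$ dominated by $p$ on all of $\R^d$; the inequality $p(v)\le f(x+v)-f(x)$ then certifies $u\in\partial f(x)$. The paper instead picks subgradients $u_n\in\partial f(x+t/n)$ at nearby points, uses the subgradient inequality at $x$ to get $n(f(x+t/n)-f(x))\le u_n^\top t$, and applies its Lemma~\ref{lem:subgrad_subseq} (boundedness of the $u_n$'s plus the fact that any subsequential limit lies in $\partial f(x)$) to extract a limit $u\in\partial f(x)$ with $u^\top t\ge \diff^+f(x;t)$. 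Your argument is the classical convex-analytic one and is cleanly self-contained; the paper's avoids Hahn--Banach and the preliminary sublinearity computation, at the cost of relying on the auxiliary sequential-compactness lemma it has already proved. Both construct an attaining subgradient, just by different mechanisms.
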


\begin{proof}
Let us first check that for all $u\in\partial f(x)$, $u^\top t\leq \diff^+f(x;t)$. To obtain this, note that by definition of $u$, we have that $f(x+\varepsilon t)\geq f(x)+\varepsilon t^\top u$ for all $\varepsilon>0$ and, hence, by rearranging and taking the limit as $\varepsilon\to 0$, $\diff^+f(x;t)\geq u\top t$. Now, let us simply check the existence of $u\in\partial f(x)$ satisfying $u^\top t=\diff^+f(x;t)$: This will end the proof. 

For all large enough integers $n$ (so $x+t/n\in \inter(G)$), let $u_n\in \partial f(x+t/n)$. Then, $f(x)\geq f(x+t/n)-u_n^\top t/n$ which, after rearranging, gives:
$$n(f(x+t/n)-f(x))\leq u_n^\top t.$$
The left hand side goes to $\diff^+f(x,t)$ and, by Lemma~\ref{lem:subgrad_subseq}, the right hand side has a subsequence that goes to $u^\top t$ for some $u\in\partial f(x)$. We thus obtain that $\diff^+f(x;t)\leq u^\top t$, which is what we aimed for. 
\end{proof}

\begin{lemma}[First order condition] \label{lem:FOC_gen}
	Let $G_0$ be an open convex set and $G\subseteq G_0$ be closed and convex. Let $f:G_0\to\R$ be a convex function and $x_*\in G$. Let $C$ be the support cone to $G$ at $x^*$. Then, 
$$f(x)\geq f(x^*), \forall x\in G \quad \iff \quad h_{\partial f(x^*)}(t)\geq 0,\forall t\in C.$$ 
\end{lemma}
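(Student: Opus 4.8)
The plan is to reduce both implications to Lemma~\ref{lem:dirder_support}, which identifies the (one-sided) directional derivative $\diff^+f(x^*;\cdot)$ with the support function $h_{\partial f(x^*)}$. Since $G_0$ is open, $x^*\in\inter(G_0)$, so that lemma applies, and moreover $\partial f(x^*)$ is a non-empty, compact, convex set by Lemma~\ref{lem:prop_subgrad}. Two consequences of this compactness will be used repeatedly: $h_{\partial f(x^*)}$ is a finite sublinear (hence continuous) function on $\R^d$, and the supremum defining $h_{\partial f(x^*)}(t)=\sup_{u\in\partial f(x^*)}u^\top t$ is attained for every direction $t$. Throughout, $T$ denotes the tangent cone to $G$ at $x^*$, so that $C=\cl(T)$.

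For the implication $\Rightarrow$, I would assume $x^*$ minimizes $f$ on $G$ and first treat directions $t\in T$: then $x^*+\varepsilon t\in G$ for all small $\varepsilon>0$, so $f(x^*+\varepsilon t)\geq f(x^*)$, and dividing by $\varepsilon$ and letting $\varepsilon\to 0^+$ yields $\diff^+f(x^*;t)\geq 0$, i.e.\ $h_{\partial f(x^*)}(t)\geq 0$. For a general $t\in C=\cl(T)$, write $t=\lim_n t_n$ with $t_n\in T$ and invoke continuity of $h_{\partial f(x^*)}$ to pass to the limit, giving $h_{\partial f(x^*)}(t)\geq 0$.

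For the implication $\Leftarrow$, I would assume $h_{\partial f(x^*)}(t)\geq 0$ for all $t\in C$, fix an arbitrary $x\in G$, and set $t:=x-x^*$. By convexity of $G$, $x^*+\varepsilon t=(1-\varepsilon)x^*+\varepsilon x\in G$ for every $\varepsilon\in(0,1]$, so $t\in T\subseteq C$ and hence $h_{\partial f(x^*)}(t)\geq 0$. Since $\partial f(x^*)$ is compact, pick $u\in\partial f(x^*)$ with $u^\top t=h_{\partial f(x^*)}(t)\geq 0$; the subgradient inequality then gives $f(x)\geq f(x^*)+u^\top(x-x^*)=f(x^*)+u^\top t\geq f(x^*)$. (Equivalently, one can use monotonicity of the difference quotients of the convex map $\varepsilon\mapsto f(x^*+\varepsilon t)$ to obtain $f(x)-f(x^*)\geq\diff^+f(x^*;t)=h_{\partial f(x^*)}(t)\geq 0$.)

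I do not expect a genuine obstacle here; the only points needing care are the passage from $T$ to its closure $C$ in the $\Rightarrow$ direction, which is handled by continuity of the support function, and the attainment of the supremum in $h_{\partial f(x^*)}$ used in the $\Leftarrow$ direction — both of which rest on compactness of $\partial f(x^*)$, itself guaranteed by $x^*\in\inter(G_0)$ via Lemma~\ref{lem:prop_subgrad}.
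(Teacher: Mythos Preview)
Your proposal is correct and follows essentially the same route as the paper's proof: both reduce the equivalence to the identity $\diff^+f(x^*;\cdot)=h_{\partial f(x^*)}$ from Lemma~\ref{lem:dirder_support}, establish the condition first on the tangent cone $T$, and then pass to the support cone $C=\cl(T)$ by continuity of the support function. The paper's proof is much terser (it simply declares the equivalence on $T$ ``clear''), whereas you spell out both directions explicitly; your subgradient-attainment argument for the $\Leftarrow$ direction is a valid alternative to the monotonicity-of-difference-quotients reasoning that underlies the paper's one-line claim, and you already note this equivalence yourself.
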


In particular, we recover the standard first order condition if $f$ is differentiable at $x^*$, that is, $x^*$ is a minimizer of $f$ on $G$ if and only if $\nabla f(x^*)\top t\geq 0$ for all $t\in C$. 

\begin{proof}
	Let $T$ be the tangent cone to $G$ at $x^*$, so $C$ is the closure of $T$.
It is clear that $x^*$ is a minimizer of $f$ on $G$ if and only if $\diff^+f(x^*;t)\geq 0$, that is, $h_{\partial f(x^*)}(t)\geq 0$ by Lemma~\ref{lem:dirder_support}.
The result follows from the continuity of $h_{\partial f(x^*)}$. 
\end{proof}

\begin{lemma}[Monotonicity of subgradients] \label{lem:monoton_subgrad}
	Let $G_0\subseteq \R^d$ be a convex set and $f:G_0\to\R$ be a convex function. Then, for all $t_1,t_2\in G_0$ and $u_1\in\partial f(t_1), u_2\in\partial f(t_2)$, we have $(t_1-t_2)^\top (u_1-u_2)\geq 0$. 
\end{lemma}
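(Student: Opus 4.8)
The plan is to use nothing more than the defining inequality of a subgradient, applied twice and added. Recall that $u_1\in\partial f(t_1)$ means $f(t)\geq f(t_1)+u_1^\top(t-t_1)$ for all $t\in G_0$, and similarly for $u_2\in\partial f(t_2)$. The only points I need to plug in are $t=t_2$ in the first inequality and $t=t_1$ in the second; both are legitimate since $t_1,t_2\in G_0$.

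First I would write
$$f(t_2)\geq f(t_1)+u_1^\top(t_2-t_1),\qquad f(t_1)\geq f(t_2)+u_2^\top(t_1-t_2).$$
Adding these two inequalities and cancelling the common term $f(t_1)+f(t_2)$ on both sides yields $0\geq u_1^\top(t_2-t_1)+u_2^\top(t_1-t_2)$. Rearranging the right-hand side as $-(u_1-u_2)^\top(t_1-t_2)$ gives $(t_1-t_2)^\top(u_1-u_2)\geq 0$, which is exactly the claim.

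There is essentially no obstacle here: the argument is a two-line computation and requires no regularity beyond the existence of the subgradients themselves (in particular no differentiability, no interiority of $t_1,t_2$). The only thing to be slightly careful about is making sure the subgradient inequality is being evaluated at points of $G_0$, which holds trivially since $t_1,t_2\in G_0$. I would present the proof exactly as above, in three short lines.

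\begin{proof}
Since $u_1\in\partial f(t_1)$ and $u_2\in\partial f(t_2)$, the definition of subgradients gives
$$f(t_2)\geq f(t_1)+u_1^\top(t_2-t_1),\qquad f(t_1)\geq f(t_2)+u_2^\top(t_1-t_2).$$
Adding these two inequalities and cancelling $f(t_1)+f(t_2)$ from both sides yields
$$0\geq u_1^\top(t_2-t_1)+u_2^\top(t_1-t_2)=-(u_1-u_2)^\top(t_1-t_2),$$
that is, $(t_1-t_2)^\top(u_1-u_2)\geq 0$.
\end{proof}
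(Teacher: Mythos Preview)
Your proof is correct and takes exactly the same approach as the paper: write the two subgradient inequalities at $t_1$ and $t_2$, add them, and cancel $f(t_1)+f(t_2)$. The paper's version is even terser (and contains a minor notational slip, writing $t_0,u_0$ in place of $t_2,u_2$), but the argument is identical.
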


For differentiable, convex functions on $\R$, this lemma simply says that the derivative is non-decreasing. 

\begin{proof}
	By definition of subgradients,
$$f(t_1)\geq f(t_0)+u_0^\top (t_1-t_0)$$
and
$$f(t_0)\geq f(t_1)+u_1^\top (t_0-t_1).$$
Adding these two inequalities yields the result. 
\end{proof}

\begin{lemma} \label{lem:diff_subgrad2}

Let $f$ be a random convex function defined on a convex set $G_0\subseteq\R^d$ and let $x_0\in \inter(G_0)$.Assume that for all $x\in G_0$, $f(x)$ is integrable and let $F(x)=\E[f(x)]$. Then, for all $t\in\R^d$, 
$$\E[\diff^+f(x_0;t)]=\diff^+F(x_0;t).$$
In particular, if $F$ is differentiable at $x_0$, so is $f$ almost surely. 

\end{lemma}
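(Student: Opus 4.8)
The plan is to exploit the monotonicity of difference quotients of convex functions and then exchange the limit with the expectation. Fix $t\in\R^d$. Since $x_0\in\inter(G_0)$, I would first pick $\varepsilon_0>0$ with $x_0\pm\varepsilon_0 t\in G_0$, so that $f(x_0)$ and $f(x_0\pm\varepsilon_0 t)$ are integrable by hypothesis. For each $\omega$, convexity of $f(\omega,\cdot)$ makes $\varepsilon\mapsto\varepsilon^{-1}\bigl(f(x_0+\varepsilon t)-f(x_0)\bigr)$ non-decreasing on $(0,\varepsilon_0]$, so it decreases to $\diff^+f(x_0;t)(\omega)$ as $\varepsilon\downarrow0$; this limit is finite because, for $0<\varepsilon\le\varepsilon_0$, the slope inequality for the three collinear points $x_0-\varepsilon_0 t$, $x_0$, $x_0+\varepsilon t$ gives
$$\varepsilon_0^{-1}\bigl(f(x_0)-f(x_0-\varepsilon_0 t)\bigr)\ \le\ \varepsilon^{-1}\bigl(f(x_0+\varepsilon t)-f(x_0)\bigr)\ \le\ \varepsilon_0^{-1}\bigl(f(x_0+\varepsilon_0 t)-f(x_0)\bigr),$$
and both envelopes lie in $L^1(\PP)$. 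Measurability of $\diff^+f(x_0;t)$ is automatic by taking $\varepsilon=1/k$, as recalled in the notation section.

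Next I would apply dominated convergence along $\varepsilon=1/k$, together with linearity of the expectation and convexity of $F$ (which, since $x_0\in\inter(G_0)$, guarantees that $\diff^+F(x_0;t)$ exists and is finite), to get
$$\E\bigl[\diff^+f(x_0;t)\bigr]=\lim_{k\to\infty}\E\bigl[k\bigl(f(x_0+t/k)-f(x_0)\bigr)\bigr]=\lim_{k\to\infty}k\bigl(F(x_0+t/k)-F(x_0)\bigr)=\diff^+F(x_0;t),$$
which is the first assertion.

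For the ``in particular'', suppose $F$ is differentiable at $x_0$, so $\diff^+F(x_0;u)+\diff^+F(x_0;-u)=0$ for every $u\in\R^d$. Fix such a $u$: midpoint convexity gives $\diff^+f(x_0;u)+\diff^+f(x_0;-u)\ge0$ pointwise, while the first assertion gives $\E\bigl[\diff^+f(x_0;u)+\diff^+f(x_0;-u)\bigr]=0$, hence $\diff^+f(x_0;u)+\diff^+f(x_0;-u)=0$ almost surely. Applying this to $u=e_1,\dots,e_d$ and intersecting the resulting $d$ probability-one events, I obtain that, almost surely, $\diff^+f(x_0;e_i)=-\diff^+f(x_0;-e_i)$ for all $i$. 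By Lemma~\ref{lem:dirder_support}, $\diff^+f(x_0;\cdot)=h_{\partial f(x_0)}$, so this reads
$$h_{\partial f(x_0)}(e_i)+h_{\partial f(x_0)}(-e_i)=\sup_{v\in\partial f(x_0)}v_i-\inf_{v\in\partial f(x_0)}v_i=0,\qquad i=1,\dots,d,$$
i.e.\ every coordinate is constant on $\partial f(x_0)$, which is nonempty and compact by Lemma~\ref{lem:prop_subgrad}; hence $\partial f(x_0)$ is a singleton, and since $x_0\in\inter(G_0)$ this means $f$ is differentiable at $x_0$ almost surely (see, e.g., \cite[\S25]{rockafellar1970convex}).

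I do not expect a genuine obstacle here. The two points needing care are: producing an $L^1$ bound on the difference quotients that is \emph{uniform in $\varepsilon$} (handled by the two-sided slope inequality above, which uses only $x_0\in\inter(G_0)$ and pointwise integrability of $f$, with no domination of $f$ or its subgradients); and, in the last step, upgrading ``almost surely differentiable in each coordinate direction'' to ``almost surely differentiable'', which works because $d<\infty$ and because differentiability of a convex function at an interior point of its domain is equivalent to its subdifferential there being a singleton.
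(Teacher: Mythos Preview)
Your proof is correct and follows essentially the same route as the paper: bound the monotone difference quotients by integrable envelopes coming from the slope inequality, interchange limit and expectation, then use that $\diff^+f(x_0;e_i)+\diff^+f(x_0;-e_i)\ge 0$ with zero mean to force equality almost surely in each coordinate direction. The only cosmetic differences are that the paper invokes monotone convergence where you use dominated convergence, and it cites \cite[Theorem~1.5.8]{schneider2014convex} directly for the last step rather than going through Lemma~\ref{lem:dirder_support} and the singleton subdifferential.
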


\begin{proof}

Fix $t\in\R^d$. For simplicity (and without loss of generality), assume that $x_0-t,x_0+t\in G_0$. First, we have that $\diff^+f(x_0;t)=\lim_{n\to\infty} n(f(x_0+t/n)-f(x_0))$ almost surely. Moreover, convexity of $f$ yields that:
\begin{itemize}
	\item $n(f(x_0+t/n)-f(x_0))$ is non-increasing with $n$;
	\item $f(x_0)-f(x_0-t)\leq n(f(x_0+t/n)-f(x_0)) \leq f(x_0+t)-f(x_0)$
\end{itemize} 
where both bounds in the last display are integrable. Therefore, monotone convergence implies that 
$$\E[\diff^+f(x_0;t)] = \lim_{n\to\infty} \E[n(f(x_0+t/n)-f(x_0))] = \lim_{n\to\infty} n(F(x_0+t/n)-F(x_0)) =  \diff^+F(x_0;t).$$

Now, let us assume that $F$ is differentiable at $x_0$. 
Since $f$ is convex, in order to show that it is almost surely differentiable at $x_0$, it is enough to show that it has partial derivatives at $x_0$ along all canonical basis directions with probability $1$ (see \cite[Theorem 1.5.8]{schneider2014convex}). That is, we need to show that with probability $1$, for all canonical basis vectors $e$, it holds that $\diff^+f(x_0;e)=\diff^+f(x_0,-e)$. Convexity of $f$ yields that the right hand side is larger or equal to the left hand side, and the first part of this lemma implies that the expected difference is zero. Hence, both sides are equal with probability $1$, which concludes the proof. 

\end{proof}

\begin{remark}
	In the previous lemma, convexity of the random function $f$ is key. For instance, let $f_0(\theta)=|\theta|$ and $f_1(\theta)=-|\theta|$, for all $\theta\in\R$ ($d=1$ here). Set $f=f_I$, where $I$ is a Bernoulli random variable with $P(I=0)=P(I=1)=1/2$. Then, with probability $1$, $f_I$ is not differentiable at $0$. Yet, $F$ is the constant function equal to $0$, which is differentiable at $0$. 
\end{remark}

\section{On metric projections} \label{App:projections}

The following lemma is a very standard result on projections on closed, convex sets in Euclidean spaces. 
We choose to state it here with our notation for the ease of the reader. 

\begin{lemma} \label{lem:charact_proj}

Let $G\subseteq\R^d$ be a non-empty, closed, convex set and $S\in\R^{d\times d}$ be symmetric, positive definite. Then, for all $z\in\R^d$, $\pi_G^S(z)$ is the unique $z^*\in G$ satisfying 
$$\langle x-z^*,z-z^*\rangle_S\leq 0, \quad \forall x\in G.$$
In particular, $z-\pi_G^S(z)$ is in the normal cone to $G$ at $\pi_G^S(z)$ with respect to $S$. 
Moreover, $\pi_G^S$ is non-expansive with respect to $\|\cdot\|_S$, that is, for all $z,z'\in \R^d$, 
$$\|\pi_G^S(z)-\pi_G^S(z')\|_S\leq \|z-z'\|_S.$$

\end{lemma}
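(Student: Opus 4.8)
The plan is to prove the three assertions in order: the variational characterization of $\pi_G^S(z)$, the normal-cone reformulation (which is immediate once the first is established), and then non-expansiveness, which follows by combining the characterization applied at two points.

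\textbf{Step 1 (variational inequality).} Fix $z\in\R^d$ and write $z^*=\pi_G^S(z)$, which exists and is unique because $G$ is non-empty, closed and convex and $x\mapsto\|x-z\|_S^2$ is strictly convex and coercive (this is how $\pi_G^S$ was defined in the notation section). I would show both directions. For the forward direction, take any $x\in G$; by convexity of $G$, $z^*+\lambda(x-z^*)\in G$ for all $\lambda\in[0,1]$, so the function $\lambda\mapsto\|z^*+\lambda(x-z^*)-z\|_S^2$ is minimized at $\lambda=0$ over $[0,1]$. Expanding,
\[
\|z^*+\lambda(x-z^*)-z\|_S^2=\|z^*-z\|_S^2+2\lambda\langle x-z^*,z^*-z\rangle_S+\lambda^2\|x-z^*\|_S^2,
\]
and the non-negativity of the derivative at $\lambda=0^+$ forces $\langle x-z^*,z^*-z\rangle_S\ge 0$, i.e.\ $\langle x-z^*,z-z^*\rangle_S\le 0$. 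For the converse, suppose $z^\dagger\in G$ satisfies $\langle x-z^\dagger,z-z^\dagger\rangle_S\le 0$ for all $x\in G$. Then for any $x\in G$,
\[
\|x-z\|_S^2=\|x-z^\dagger\|_S^2+2\langle x-z^\dagger,z^\dagger-z\rangle_S+\|z^\dagger-z\|_S^2\ge \|z^\dagger-z\|_S^2,
\]
using the hypothesis to drop the cross term; hence $z^\dagger$ is the (unique) minimizer, so $z^\dagger=z^*$.

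\textbf{Step 2 (normal cone).} The inequality $\langle x-z^*,z-z^*\rangle_S\le 0$ for all $x\in G$ is, by the definition of the normal cone to $G$ at $z^*$ with respect to $S$ given in the notation section, exactly the statement that $z-z^*$ lies in that normal cone. Nothing further is needed.

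\textbf{Step 3 (non-expansiveness).} Let $z,z'\in\R^d$ and set $z^*=\pi_G^S(z)$, $z'^*=\pi_G^S(z')$. Apply Step 1 at $z$ with the test point $x=z'^*\in G$, and at $z'$ with the test point $x=z^*\in G$:
\[
\langle z'^*-z^*,\,z-z^*\rangle_S\le 0,\qquad \langle z^*-z'^*,\,z'-z'^*\rangle_S\le 0.
\]
Adding these gives $\langle z'^*-z^*,\,(z-z^*)-(z'-z'^*)\rangle_S\le 0$, i.e.\ $\langle z'^*-z^*,\,z-z'\rangle_S\ge\|z'^*-z^*\|_S^2$. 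By Cauchy--Schwarz for $\langle\cdot,\cdot\rangle_S$, $\|z'^*-z^*\|_S^2\le\|z'^*-z^*\|_S\,\|z-z'\|_S$, and dividing by $\|z'^*-z^*\|_S$ (the inequality being trivial when this is zero) yields $\|\pi_G^S(z)-\pi_G^S(z')\|_S\le\|z-z'\|_S$.

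There is no real obstacle here; this is the standard projection argument transported to the inner product $\langle\cdot,\cdot\rangle_S$. The only point requiring a word of care is the existence and uniqueness of the minimizer, which has in fact already been asserted in the notation section where $\pi_G^S$ was introduced, so it can simply be invoked.
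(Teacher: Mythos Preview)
Your proof is correct and follows essentially the same route as the paper: the variational characterization via expanding $\|\cdot\|_S^2$ along the segment $z^*+\lambda(x-z^*)$, the normal-cone statement as an immediate restatement, and non-expansiveness by applying the characterization at both points, adding, and invoking Cauchy--Schwarz. If anything, your Step~1 is more carefully written than the paper's, which simply says ``expanding these Euclidean norms and rearranging'' without making the $\lambda\to 0$ argument explicit.
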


\begin{proof}
	Let $z\in \R^d$. Then, by definition of $\pi_G^S(z)$, we have, for all $x\in G$, that $\|z-\pi_G^S(z)\|_S^2\leq \|z-x\|_S^2$. Expanding these Euclidean norms and rearranging yield that $\pi_G^S(z)$ does satisfy the first inequality of the lemma. Now, assume that $z^*\in G$ also satisfies this inequality. Reverse engineering simply implies that $\|z-z^*\|_S^2\leq \|z-x\|_S^2$ for all $x\in G$, and hence, $z^*=\pi_G^S(z)$. 
	
Non-expansiveness of $\pi_G^S$ is a direct consequence of the first inequality of the lemma. Indeed, it implies both that 
$$\langle \pi_G^S(z')-\pi_G^S(z),z-\pi_G^S(z)\rangle_S\leq 0$$
and 
$$\langle \pi_G^S(z)-\pi_G^S(z'),z'-\pi_G^S(z')\rangle_S\leq 0.$$
Summing these two inequalities yields that 
\begin{align}
	\|\pi_G^S(z')-\pi_G^S(z)\|_S^2 & \leq \langle \pi_G^S(z')-\pi_G^S(z),z'-z\rangle_S \label{eqn:charact_proj} \\
	& \leq \|\pi_G^S(z')-\pi_G^S(z)\|_S \|z-z'\|_S \nonumber
\end{align}
by Cauchy--Schwarz inequality, which yields the result. 
\end{proof}

\section{On the directional differentiability of metric projections} \label{sec:appendix_diff_proj}

Here, we gather several facts on the existence of directional derivatives of metric projections and their formulas. For simplicity, we choose to state and prove all the results of this section for the standard, canonical Euclidean structure of $\R^d$, that is, for $S=I_d$. All the results and formulas extend in a straightforward manner to general symmetric, positive definite $S$. 

Let $G\subseteq\R^d$ be a non-empty, closed, convex set. Although the map $\pi_G$ is non-expansive, it does not necessarily have directional derivatives at any point. A counterexample (among others) is given in \cite{shapiro1994directionally} and can be described easily for $d=2$: Let $G$ be the convex hull of all points of the form $(\cos(\pm\pi/2^k),\sin(\pm\pi/2^k))$, for $k=0,1,2,\ldots$ and of $(1,0)$. Then, letting $x=(a,0)$ for any $a>0$, $\pi_G(x)=(1,0)$ and $\pi_G$ does not have a directional derivative at $(0,\pm 1)$. Roughly speaking, this stems from the fact that the boundary of $G$ is not twice directionally differentiable at $(1,0)$ in neither directions $(0,1)$ or $(0,-1)$. 

First, it is obvious that if $x\in\inter(G)$, then $\pi_G$ coincides with the identity map on a neighborhood of $x$, and hence, it is differentiable (and hence, it has directional derivatives) at $x$, and its Jacobian at $x$ is the identity matrix. If $x\in\partial G$ then $\pi_G$ is not always differentiable at $x$ but it has directional derivatives in all directions:

\begin{lemma}{\cite{shapiro1987differentiability},\cite[Lemma 4.6]{ZARANTONELLO1971237}} \label{lem:Zarantonello}
Let $G\subseteq\R^d$ be non-empty, closed and convex and let $x\in\partial G$. Then, $\pi_G$ has directional derivatives at $x$, given by
$$\diff^+\pi_G(x;\cdot)=\pi_{C}$$
where $C$ is the support cone to $G$ at $x$.
\end{lemma}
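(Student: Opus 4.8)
The plan is to reduce everything to the point $x=0$ (by translating $G$) and to exploit the scaling structure of metric projections. For $\varepsilon>0$ and a direction $t\in\R^d$, consider the difference quotient $\varepsilon^{-1}\bigl(\pi_G(\varepsilon t)-\pi_G(0)\bigr)$. Since $0\in\partial G\subseteq G$, we have $\pi_G(0)=0$, so the quotient is $\varepsilon^{-1}\pi_G(\varepsilon t)$. The first key observation is the homogeneity identity $\varepsilon^{-1}\pi_G(\varepsilon t)=\pi_{\varepsilon^{-1}G}(t)$, which follows directly from the variational characterisation in Lemma~\ref{lem:charact_proj} (minimising $\|x-\varepsilon t\|^2$ over $x\in G$ is the same, after dividing by $\varepsilon$, as minimising $\|y-t\|^2$ over $y\in\varepsilon^{-1}G$). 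So the existence of the directional derivative amounts to showing that $\pi_{\varepsilon^{-1}G}(t)$ converges as $\varepsilon\downarrow 0$, and that the limit is $\pi_C(t)$.

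Next I would identify the limiting set of the family $\varepsilon^{-1}G$ as $\varepsilon\downarrow 0$. The dilated sets $\varepsilon^{-1}G$ are nested increasing as $\varepsilon$ decreases (because $0\in G$ and $G$ is convex: $\varepsilon'<\varepsilon$ implies $G\subseteq (\varepsilon/\varepsilon')G$, hence $\varepsilon^{-1}G\subseteq (\varepsilon')^{-1}G$), and their union is exactly the tangent cone $T$ to $G$ at $0$; its closure is the support cone $C$. I would then invoke continuity of metric projections under this kind of set convergence: if closed convex sets $C_\varepsilon$ increase to a set whose closure is $C$, then $\pi_{C_\varepsilon}(t)\to\pi_C(t)$ for every $t$. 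This can be proved by hand in a few lines: the points $\pi_{\varepsilon^{-1}G}(t)$ are bounded (they lie in the ball of radius $\|t\|$ around $t$, since $0$ is always a competitor), any subsequential limit $y^\ast$ lies in $C$ (it is a limit of points in the increasing union, hence in its closure) and satisfies $\|t-y^\ast\|\le\|t-z\|$ for every $z$ in the union, hence for every $z\in C$ by density and continuity of the norm; by uniqueness of the projection onto the closed convex cone $C$, $y^\ast=\pi_C(t)$, so the whole net converges.

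Finally I would check that this convergence is uniform in $t$ on compact sets, which upgrades the pointwise directional derivative to a genuine directional derivative (and, in the terminology of the paper, to Hadamard directional differentiability, since all the maps involved are non-expansive by Lemma~\ref{lem:charact_proj}): non-expansiveness of each $\pi_{\varepsilon^{-1}G}$ and of $\pi_C$ means the family $\{t\mapsto\pi_{\varepsilon^{-1}G}(t)\}$ is equi-Lipschitz, so pointwise convergence on a dense set plus equicontinuity gives locally uniform convergence. I expect the main obstacle to be the clean justification of the set-convergence step — specifically verifying that $\bigcup_{\varepsilon>0}\varepsilon^{-1}G$ really is the tangent cone (this is essentially the definition of the tangent cone, but one must be careful that points of $G$ near $0$ are captured and that points far from $0$ in a tangent direction are captured in the limit) and that projection is continuous along this monotone family without extra regularity hypotheses on $\partial G$. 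Everything else is routine convex analysis; the references \cite{shapiro1987differentiability} and \cite[Lemma 4.6]{ZARANTONELLO1971237} handle exactly this continuity point.
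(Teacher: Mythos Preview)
The paper does not actually prove this lemma: it is stated with citations to \cite{shapiro1987differentiability} and \cite[Lemma 4.6]{ZARANTONELLO1971237} and no proof is given. Your argument is correct and is essentially the classical one found in those references: the scaling identity $\varepsilon^{-1}\pi_G(\varepsilon t)=\pi_{\varepsilon^{-1}G}(t)$, the monotone convergence of the dilations $\varepsilon^{-1}G$ to the tangent cone (whose closure is $C$), and the subsequential-limit argument showing that projections converge to $\pi_C(t)$ are exactly the right ingredients. One small point: your verification that $\bigcup_{\varepsilon>0}\varepsilon^{-1}G$ equals the tangent cone uses the nestedness you established, since the paper's definition asks that $\varepsilon t\in G$ for \emph{all} small enough $\varepsilon$, not just some $\varepsilon$; the monotonicity makes these equivalent. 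The Hadamard upgrade via equi-Lipschitzness is also fine and matches how the paper treats directional derivatives of non-expansive maps elsewhere.
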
 

In particular, $\diff^+\pi_G$ is differentiable at $x$ if and only if $\pi_C$ is a linear map, that is, $C$ is a linear subspace, if and only if $G$ is included in a strict affine subspace of $A\subseteq \R^d$ and $x$ is in the relative interior of $G$ (in that case, $A=x+C$).


When $x\in\R^d\setminus G$, we have the following sufficient condition for differentiability of $\pi_G$ at $x$.

\begin{lemma}{\cite[Lemma 1 and Theorem 2]{holmes1973smoothness}} \label{lem:diffprojCk}

Let $G\subseteq\R^d$ be non-empty, closed and convex. Let $x\in\R^d\setminus G$ and assume that the boundary of $G$ is of class $C_k$ in a neighborhood of $\pi_G(x)$, for some $k\geq 2$. Then, $\pi_G$ is of class $C^{k-1}$ in a neighborhood of $x$. 

Further assume that $\inter(G)\neq\emptyset$ and $0\in\inter(G)$ and let $\rho_G$ be the gauge function of $G$, defined by $\rho_G(y)=\inf\{\lambda>0:y\in\lambda G\}$, for all $y\in\R^d$. Then, $\rho_G$ is twice differentiable at $y:=\pi_G(x)$ with $\nabla\rho_G(y)\neq 0$ and 
\begin{equation} \label{eqn:diff_proj}
\diff\pi_G(x;\cdot)=\left(I_d+\frac{\|x-y\|}{\|\nabla\rho_G(y)\|}\pi_{(x-y)^\perp}\nabla^2\rho_G(y)\right)^{-1}\pi_{(x-y)^\perp},
\end{equation}
where we have identified linear maps with their matrices in the canonical basis.
\end{lemma}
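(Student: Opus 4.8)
The plan is to reduce both assertions to one application of the implicit function theorem to the first-order optimality system that characterises $\pi_G$. Since $x\notin G$ and $G$ is closed, Lemma~\ref{lem:charact_proj} gives $y:=\pi_G(x)\in\partial G$ with $x-y$ in the normal cone to $G$ at $y$. As $\partial G$ is a $C^k$ hypersurface near $y$, I would fix a $C^k$ defining function $\psi$ with $G=\{\psi\le 0\}$ locally and $\nabla\psi\neq 0$ on $\partial G$ near $y$; then $\nabla\psi(y)$ is the outward normal and, $\partial G$ being smooth at $y$, the element $x-y$ of the normal cone must be a positive multiple $\lambda_0\nabla\psi(y)$ of it, with $\lambda_0>0$ (strict because $x\notin G$). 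By non-expansiveness, hence continuity, of $\pi_G$ and openness of $\R^d\setminus G$, for every $x'$ near $x$ one still has $\pi_G(x')\in\partial G$ inside the $C^k$-region, so $\bigl(p,\lambda\bigr)=\bigl(\pi_G(x'),\,\|x'-\pi_G(x')\|/\|\nabla\psi(\pi_G(x'))\|\bigr)$ is the unique solution near $(y,\lambda_0)$ of $F(p,\lambda,x')=0$, where $F(p,\lambda,x'):=\bigl(\psi(p),\,x'-p-\lambda\nabla\psi(p)\bigr)\in\R\times\R^d$.

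For the implicit function step I would compute the $(d+1)\times(d+1)$ Jacobian of $F$ in $(p,\lambda)$ at $(y,\lambda_0,x)$, namely
\[
\begin{pmatrix}\nabla\psi(y)^{\top} & 0\\[2pt] -I_d-\lambda_0\nabla^2\psi(y) & -\nabla\psi(y)\end{pmatrix},
\]
and show it is invertible. If $(v,\mu)$ lies in its kernel then $\nabla\psi(y)^{\top}v=0$ and, taking the inner product of the lower block with $v$, $\|v\|^2+\lambda_0\,v^{\top}\nabla^2\psi(y)\,v=0$. The decisive point is that convexity of $G=\{\psi\le 0\}$ forces the restriction of $\nabla^2\psi(y)$ to $\nabla\psi(y)^{\perp}$ to be positive semidefinite: differentiating $\psi\circ\gamma\equiv 0$ twice along a $C^2$ curve $\gamma$ in $\partial G$ with $\gamma(0)=y$, $\gamma'(0)=v$ gives $v^{\top}\nabla^2\psi(y)\,v=-\nabla\psi(y)^{\top}\gamma''(0)$, while the inclusion $G\subseteq\{w:\nabla\psi(y)^{\top}(w-y)\le 0\}$ and a second-order Taylor expansion of $\nabla\psi(y)^{\top}(\gamma(t)-y)$ at $t=0$ force $\nabla\psi(y)^{\top}\gamma''(0)\le 0$. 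With $\lambda_0>0$ this yields $v=0$, and then $\mu=0$ since $\nabla\psi(y)\neq 0$. Because $\psi\in C^k$ makes $F$ of class $C^{k-1}$, the implicit function theorem produces a $C^{k-1}$ map $x'\mapsto(p(x'),\lambda(x'))$ near $x$, and uniqueness of the metric projection gives $p(x')=\pi_G(x')$, proving the first assertion.

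For the formula I would first justify taking $\psi=\rho_G-1$. Assuming $\inter(G)\neq\emptyset$ and $0\in\inter(G)$, the value $\rho_G(z)$ is characterised near $y$ by $\psi(z/\rho_G(z))=0$; applying the implicit function theorem to $(z,s)\mapsto\psi(z/s)$ at $(y,1)$ — whose $s$-derivative is $-\nabla\psi(y)^{\top}y$, nonzero because the supporting hyperplane at $y$ separates $y$ from the interior point $0$ — shows $\rho_G$ is $C^k$, hence twice differentiable, near $y$, with $\partial G=\{\rho_G=1\}$. Positive $1$-homogeneity gives $\nabla\rho_G(y)^{\top}y=\rho_G(y)=1$, so $\nabla\rho_G(y)\neq 0$, and taking norms in $x-y=\lambda_0\nabla\rho_G(y)$ identifies $\lambda_0=\|x-y\|/\|\nabla\rho_G(y)\|$. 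Differentiating $\rho_G(\pi_G(x'))=1$ and $x'-\pi_G(x')=\lambda(x')\nabla\rho_G(\pi_G(x'))$ at $x$ in a direction $h$ gives $\nabla\rho_G(y)^{\top}\diff\pi_G(x;h)=0$, hence $\diff\pi_G(x;h)\in(x-y)^{\perp}$ since $\nabla\rho_G(y)\parallel x-y$, together with
\[
h-\diff\pi_G(x;h)=\bigl(\diff\lambda(x;h)\bigr)\nabla\rho_G(y)+\lambda_0\,\nabla^2\rho_G(y)\,\diff\pi_G(x;h).
\]
Applying $\pi_{(x-y)^{\perp}}$ annihilates the $\nabla\rho_G(y)$-term and fixes $\diff\pi_G(x;h)$, leaving $\pi_{(x-y)^{\perp}}h=\bigl(I_d+\lambda_0\,\pi_{(x-y)^{\perp}}\nabla^2\rho_G(y)\bigr)\diff\pi_G(x;h)$. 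The operator $M:=I_d+\lambda_0\,\pi_{(x-y)^{\perp}}\nabla^2\rho_G(y)$ is invertible — any $v$ in its kernel lies in $(x-y)^{\perp}$ and satisfies $\|v\|^2=-\lambda_0\,v^{\top}\nabla^2\rho_G(y)\,v\le 0$ by the semidefiniteness fact above — so $\diff\pi_G(x;\cdot)=M^{-1}\pi_{(x-y)^{\perp}}$, which is exactly \eqref{eqn:diff_proj}.

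The step I expect to be the real obstacle is the nondegeneracy of the optimality system — the invertibility of the Jacobian of $F$ and of the operator $M$ — since this is precisely where convexity of $G$ must enter, via positive semidefiniteness of the second fundamental form of $\partial G$ at $y$ combined with $\lambda_0>0$; once that is in place, the remainder is routine bookkeeping with the implicit function theorem and linear algebra on the tangent hyperplane $(x-y)^{\perp}$.
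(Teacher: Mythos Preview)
The paper does not give its own proof of this lemma: it is stated with a citation to \cite{holmes1973smoothness} and used as a black box. So there is no ``paper's proof'' to compare against, and the relevant question is simply whether your argument stands on its own.

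It does. Your reduction to the implicit function theorem applied to the KKT system $F(p,\lambda,x')=(\psi(p),\,x'-p-\lambda\nabla\psi(p))$ is exactly the standard route (and is, in essence, Holmes' own approach). The two places where something could go wrong are both handled correctly:
\begin{itemize}
\item \emph{Nondegeneracy of the Jacobian.} Your argument that $v^\top\nabla^2\psi(y)\,v\ge 0$ for tangent $v$ via the second fundamental form of $\partial G$ is correct and is the right way to use convexity of $G$ without assuming convexity of the particular defining function $\psi$. Combined with $\lambda_0>0$, this gives the invertibility you need.
\item \emph{Identification of the implicit solution with $\pi_G$.} You correctly invoke continuity of $\pi_G$ (non-expansiveness) and openness of $\R^d\setminus G$ to ensure that $(\pi_G(x'),\lambda(x'))$ stays in the uniqueness neighbourhood furnished by the implicit function theorem.
\end{itemize}

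Two minor remarks. First, in the invertibility of $M=I_d+\lambda_0\,\pi_{(x-y)^\perp}\nabla^2\rho_G(y)$ you appeal to ``the semidefiniteness fact above''; here you can argue more directly, since $\rho_G$ is itself convex (gauge of a convex body containing the origin), so $\nabla^2\rho_G(y)\succeq 0$ on all of $\R^d$, not just on the tangent hyperplane. Second, it is worth noting explicitly why the formula $\diff\pi_G(x;\cdot)=M^{-1}\pi_{(x-y)^\perp}$ makes sense with the \emph{full} inverse $M^{-1}$ on $\R^d$: since $M$ maps $(x-y)^\perp$ into itself and acts as the identity on the $\mathrm{span}(x-y)$-component modulo a perpendicular correction, $M$ is block lower triangular in the splitting $\R^d=\mathrm{span}(x-y)\oplus(x-y)^\perp$, so $M^{-1}$ restricted to $(x-y)^\perp$ coincides with $(M|_{(x-y)^\perp})^{-1}$, which is what your derivation actually produces.
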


Note that the assumption that $0\in\inter(G)$ is made with no loss of generality, since $G$ could be replaced with $G-y_0$ for some $y_0\in \inter(G)$ (and $\rho_G$ would be replaced with $\rho_{G-y_0}$ in \eqref{eqn:diff_proj}). Note also that under the assumptions of the lemma, for all $z\in\R^d$, $\diff\pi_G(x;z)\in (x-y)^\perp$. 

One can easily derive a simpler formula than \eqref{eqn:diff_proj} by identifying $\R^d$ with $\R^{d-1}\times\R$, $y$ with $(0,0)$, $x$ with $(0,-t)$ for some $t>0$ and by locally identifying $G$ with the epigraph of a twice differentiable convex map $f:\R^{d-1}\to\R$ with $f(0)=0$ and $\nabla f(0)=0$. Then, for all $z=(z_1,z_2)\in \R^{d-1}\times \R$, $\diff\pi_G(x;z)=\left((I_{d-1}+t\nabla^2f(0))^{-1}z_1,0\right).$

A simple example to have in mind is that of $G=B(0,R)$. Then, for all $x\in\R^d$ with $\|x\|>R$, we obtain 
$$\diff\pi_G(x;z)=\frac{R}{\|x\|}\pi_{x^\perp}(z), \quad \forall z\in\R^d.$$
Therefore, in that case, $\diff\pi_G(x;\cdot)$ is a rescaled version of the projection onto $G$, where the scaling factor depends on both the distance from $x$ to $G$ and the curvature of $G$ at $\pi_G(x)$. 

If $G$ is defined by smooth, convex constraints, we have the following result which guarantees the existence of directional derivatives of $\pi_G$. 

\begin{lemma}{\cite[Theorem 3.2]{shapiro2016differentiability}} \label{lem:shapiroSlater}  
Let $g_1,\ldots,g_p:\R^d\to\R$ be twice differentiable, convex functions and let $G=\{x\in\R^d:g_j(x)\leq 0, j=1,\ldots,p\}$. Assume Slater's qualification constraint: There exists $y_0\in\R^d$ with $g_j(y_0)<0$ for all $j=1,\ldots,p$. Then, $\pi_G$ has directional derivatives everywhere. 
\end{lemma}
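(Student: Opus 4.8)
The plan is to reduce to the genuinely new configuration $x\notin G$ and then run the second‑order sensitivity analysis of the parametric projection problem, quoting the general perturbation theory for $C^2$ nonlinear programs.

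First I would dispose of the trivial cases. If $x\in\inter(G)$, then $\pi_G$ coincides with the identity map on a neighbourhood of $x$, so it is differentiable there. If $x\in\partial G$, then Lemma~\ref{lem:Zarantonello} already yields directional derivatives of $\pi_G$ at $x$ (the projection onto the support cone), and no smoothness of the $g_j$ is needed. So assume $x\notin G$, set $y:=\pi_G(x)\in\partial G$, and let $I(y)=\{j:g_j(y)=0\}$ be the active set. The problem to analyse is the parametric program $\min_z\{\tfrac12\|z-x-u\|^2 : g_j(z)\le 0\}$, whose unique solution is $\pi_G(x+u)$; we want directional differentiability of $u\mapsto\pi_G(x+u)$ at $u=0$.

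The second step records the first‑order structure. Convexity of the $g_j$ turns Slater's point $y_0$ into a single direction $d_0:=y_0-y$ with $\nabla g_j(y)^\top d_0\le g_j(y_0)<0$ for every $j\in I(y)$, which is exactly the Mangasarian--Fromovitz constraint qualification at $y$. Hence the KKT conditions hold at $y$: there are multipliers $\lambda_j\ge 0$, vanishing off $I(y)$, with $x-y=\sum_j\lambda_j\nabla g_j(y)$, and the multiplier set $\Lambda$ is nonempty, convex and compact. Crucially, the Hessian in $z$ of the Lagrangian $L(z,\lambda)=\tfrac12\|z-x\|^2+\sum_j\lambda_j g_j(z)$ is $\nabla^2_{zz}L(y,\lambda)=I_d+\sum_j\lambda_j\nabla^2 g_j(y)\succeq I_d$, uniformly over $\lambda\in\Lambda$, because $\lambda_j\ge 0$ and $\nabla^2 g_j\succeq 0$; thus the strong (uniform) second‑order sufficient condition is satisfied for free. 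Combined with the fact that $\pi_G$ is globally $1$‑Lipschitz, the difference quotients $t^{-1}\big(\pi_G(x+th)-y\big)$ are bounded as $t\downarrow 0$, hence have accumulation points.

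Then I would invoke the sensitivity theorem for $C^2$ programs under MFCQ and a uniform second‑order condition — this is precisely \cite[Theorem 3.2]{shapiro2016differentiability} — to conclude that $u\mapsto\pi_G(x+u)$ is directionally differentiable at $0$, with $\pi'_G(x;h)$ the unique optimal solution of the linearized quadratic problem
\begin{equation*}
\min_{d\in\mathcal C(y)}\ \sup_{\lambda\in\Lambda}\ \Big\{\tfrac12\,d^\top\big(I_d+\textstyle\sum_j\lambda_j\nabla^2 g_j(y)\big)d\ -\ h^\top d\Big\},
\qquad \mathcal C(y):=\{d:\nabla g_j(y)^\top d\le 0,\ j\in I(y)\}\cap (x-y)^{\perp}.
\end{equation*}
The objective dominates $\tfrac12\|d\|^2-h^\top d$, so it is strongly convex on the closed convex cone $\mathcal C(y)$; the minimizer therefore exists, is unique, and depends continuously and positively homogeneously on $h$. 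That uniqueness is what promotes "every accumulation point of the difference quotients solves the linearized problem" to "the limit exists." The hard part of a from‑scratch proof is exactly the step hidden inside Shapiro's theorem: constructing, for each critical direction $d\in\mathcal C(y)$, a feasible arc $z(t)=y+td+o(t)\in G$ to use as a competitor in the variational inequality characterizing $\pi_G(x+th)$ — this tangential‑feasibility estimate is where Slater/MFCQ is genuinely needed, since without a curvature correction the critical cone need not be the tangent cone of an actual feasible path. Since the paper only uses the qualitative conclusion, I would simply cite \cite{shapiro2016differentiability} and observe that all its hypotheses are met here trivially: $C^2$ convex constraints, Slater $\Rightarrow$ MFCQ, and an identity objective Hessian $\Rightarrow$ automatic uniform second‑order sufficiency.
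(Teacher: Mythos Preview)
The paper does not prove this lemma at all: it is stated as a direct citation of \cite[Theorem 3.2]{shapiro2016differentiability}, with no accompanying argument. Your proposal is therefore strictly more than what the paper provides --- you sketch why the hypotheses of Shapiro's theorem (MFCQ via Slater, uniform second-order sufficiency from the identity Hessian plus convexity of the $g_j$) are automatically satisfied in this setting, and you indicate the shape of the linearized problem whose unique solvability yields the directional derivative. That verification is correct and is exactly the right way to justify invoking the cited result, but the paper itself simply points to the reference and moves on.
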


Let us now look at further cases where $\partial G$ is not necessarily differentiable in a neighborhood of $\pi_G(x)$. First, let us explore the simple case where $G$ is a closed, convex cone and $\pi_G(x)=0$.

\begin{lemma} \label{lem:diff_proj}
	Let $C\subseteq \R^d$ be a non-empty, closed convex cone, $S\in\R^{d\times d}$ be symmetric, positive definite and $x\in \R^d$ satisfying $x^\top y\leq 0$ for all $y\in C$. Then, $\pi_C$ is directionally differentiable at $x$ and for all $z\in\R^d$, the directional derivative of $\pi_C$ at $x$ in the direction $z$ is given by 
$$\diff^+\pi_C(x;z)=\lim_{\varepsilon\downarrow 0}\frac{\pi_C(x+\varepsilon z)}{\varepsilon}=\pi_{C_x}(z).$$

\end{lemma}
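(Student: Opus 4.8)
The plan is to reduce the statement to Lemma~\ref{lem:Zarantonello}, which already handles directional derivatives of metric projections at boundary points, after observing that for a convex cone $C$, projection commutes with positive scaling. Concretely, write $p_\varepsilon:=\pi_C(x+\varepsilon z)$ and note that, since $x^\top y\le 0$ for all $y\in C$, we have $\pi_C(x)=0$, so $0\in\partial C$ (unless $C=\{0\}$, a trivial case, or $C=\R^d$, also trivial). The quantity whose limit we want is $\varepsilon^{-1}p_\varepsilon = \varepsilon^{-1}\pi_C(x+\varepsilon z)$. The key homogeneity fact is that, because $C=\lambda C$ for every $\lambda>0$, we have $\pi_C(\lambda w)=\lambda\,\pi_C(w)$ for every $w\in\R^d$ and $\lambda>0$; applying this with $\lambda=\varepsilon^{-1}$ (or rather, noting $\varepsilon^{-1}\pi_C(x+\varepsilon z)=\pi_C(\varepsilon^{-1}x+z)$) rewrites the difference quotient without any $\varepsilon$ in front of the projection.

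Given that rewriting, the task becomes: show $\pi_C(z+\varepsilon^{-1}x)\to \pi_{C_x}(z)$ as $\varepsilon\downarrow 0$, i.e. $\pi_C(z+s x)\to\pi_{C_x}(z)$ as $s\to\infty$, where $C_x=\{y\in C:x^\top y=0\}$ (recall $C_x=C_{x,0}$ in the paper's notation, using $S=I_d$). Here the geometry is that $x$ lies in the polar cone of $C$: adding larger and larger multiples of $x$ ``pushes'' the point in a direction that the projection sees as strongly normal to $C$ at $0$, so in the limit the projection is forced onto the face of $C$ exposed by $x$, which is exactly $C_x$. To make this rigorous I would use the variational characterization from Lemma~\ref{lem:charact_proj}: $p_s:=\pi_C(z+sx)$ is characterized by $\langle y-p_s,\,z+sx-p_s\rangle\le 0$ for all $y\in C$. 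First bound $\|p_s\|$: taking $y=0\in C$ gives $\langle p_s,\,z+sx-p_s\rangle\ge0$, hence $\|p_s\|^2\le \langle p_s,z\rangle + s\langle p_s,x\rangle\le \langle p_s,z\rangle$ since $\langle p_s,x\rangle\le 0$ (as $p_s\in C$ and $x$ is in the polar cone), so $\|p_s\|\le\|z\|$; thus $(p_s)$ is bounded. Next, $0\le -s\langle p_s,x\rangle\le \langle p_s,z\rangle-\|p_s\|^2\le\|z\|^2$, so $\langle p_s,x\rangle\to0$ as $s\to\infty$, and any cluster point $p_*$ of $(p_s)$ (as $s\to\infty$) satisfies $p_*\in C$ (closedness) and $\langle p_*,x\rangle=0$, i.e. $p_*\in C_x$. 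Finally, passing to the limit along a convergent subsequence in the variational inequality restricted to $y\in C_x\subseteq C$ gives $\langle y-p_*,\,z-p_*\rangle\le 0$ for all $y\in C_x$ (the $sx$ term drops because $\langle y,x\rangle=0=\langle p_*,x\rangle$ forces $\langle y-p_*,sx\rangle=0$), which by Lemma~\ref{lem:charact_proj} identifies $p_*=\pi_{C_x}(z)$; since the limit is the same for every subsequence, $p_s\to\pi_{C_x}(z)$, and directional differentiability with the claimed formula follows.

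The main obstacle I anticipate is the passage to the limit in the variational inequality: one must be careful that the term $s\langle y-p_s,x\rangle$ does not blow up when testing against general $y\in C$. The remedy, as indicated above, is to test only against $y\in C_x$ so that $\langle y,x\rangle=0$ and, combined with $\langle p_s,x\rangle\to0$, the product $s\langle y-p_s,x\rangle=-s\langle p_s,x\rangle$ is a non-negative quantity that we have already shown is bounded by $\|z\|^2$ — hence it only helps the inequality in the correct direction and can be dropped in the limit. (One small check: one should note $C_x$ is itself a non-empty closed convex cone containing $0$, so $\pi_{C_x}$ is well-defined.) A remark worth including is that this lemma is the conical special case underlying Lemma~\ref{lem:Zarantonello} and feeds into the proofs of directional differentiability of $\pi_G$ at boundary points of general convex $G$.
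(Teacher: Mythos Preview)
Your proposal is correct and takes a genuinely different route from the paper. The paper also exploits homogeneity of $\pi_C$, but rewrites $\varepsilon^{-1}\pi_C(x+\varepsilon z)$ as $\argmin_{y\in C}\bigl(x^\top(z-y)+\tfrac{\varepsilon}{2}\|z-y\|^2\bigr)$, then splits into the cases $C\subseteq x^\perp$ (trivial) and $C_x\subsetneq C$. In the nontrivial case it introduces $t_\varepsilon=-x^\top y_\varepsilon$, observes that $y_\varepsilon=\pi_{C_{x,t_\varepsilon}}(z)$ is the projection onto the affine slice $C\cap\{w:x^\top w=-t_\varepsilon\}$, shows $t_\varepsilon\to0$, obtains boundedness of $(y_\varepsilon)$ by exhibiting an explicit point in each slice (using some $y\in C$ with $x^\top y<0$), and finally passes to the limit in the variational inequality by constructing, for every $y\in C_x$, an approximating sequence $w_n\in C_{x,t_n}$ with $w_n\to y$.

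Your argument rescales the other way, writing $\varepsilon^{-1}\pi_C(x+\varepsilon z)=\pi_C(z+sx)$ with $s=1/\varepsilon\to\infty$, and avoids both the case split and the approximating-sequence construction: testing the variational inequality at $y=0$ gives boundedness $\|p_s\|\le\|z\|$ and the rate $0\le -\langle p_s,x\rangle\le\|z\|^2/s$ in one stroke, and testing directly against $y\in C_x$ works because the potentially dangerous term $s\langle y-p_s,x\rangle=-s\langle p_s,x\rangle$ is nonnegative and can simply be discarded (your remark that it is bounded by $\|z\|^2$ is true but more than needed --- the sign alone already yields $\langle y-p_s,z-p_s\rangle\le0$ for every $s$). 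This is shorter and cleaner; the paper's slice-based argument is more geometric and makes the role of the level sets $C_{x,t}$ explicit, which may be of independent interest but is not needed for the bare statement. One small inaccuracy in your closing remark: this lemma is not really a special case of Lemma~\ref{lem:Zarantonello} (which treats $x\in\partial G$), since here $x$ lies in the polar of $C$ and hence $x\notin C$ unless $x=0$; the two results are complementary.
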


Recall the notation $C_x=\{y\in C:x^\top y=0\}=C\cap x^{\perp}$. Note that, with the notation of the lemma, the assumption that $x^\top y\leq 0$ for all $y\in C$ (that is, $x$ is in the polar of $C$) implies that $\pi_C(x)=0$. 

\begin{proof}

Let $\varepsilon>0$. Since, for all $y\in\R^d$, $y\in C\iff \varepsilon y\in C$, we have that
\begin{align*}
	\pi_C(x+\varepsilon z) & = \argmin_{y\in C} \|x+\varepsilon z-y\|^2 \\
	& = \varepsilon \argmin_{y\in C} \|x+\varepsilon z-\varepsilon y\|^2 \\
	& = \varepsilon \argmin_{y\in C}\left(x^\top (z-y)+\frac{\varepsilon}{2}\|z-y\|^2 \right).
\end{align*}

If $C\subseteq x^{\perp}$, then $x^\top y=0$ for all $y\in C$ so the previous display implies that $\pi_C(x+\varepsilon z)=\varepsilon\pi_C(z)$, yielding the desired result in that case, since $C_x=C$. 

Let us now assume that $C_x$ is a strict subset of $C$. That is, there are $y\in C$ with $x^\top y<0$. Our goal is still to show that $y_\varepsilon:=\argmin_{y\in C} \left(x^\top (z-y)+\frac{\varepsilon}{2}\|z-y\|^2\right)\xrightarrow[t\to 0]{}\pi_{C_x}(z)$.

First, note that for all $\varepsilon>0$, this vector $y_\varepsilon$ is well defined by strong convexity of the function that it minimizes and the fact that $C$ is a closed convex set. Now, let $t_\varepsilon=-x^\top y_\varepsilon$. Then, by definition, $y_\varepsilon\in C_{x,t_\varepsilon}$ for all $\varepsilon>0$. Moreover, it is clear that 
\begin{equation*}
	y_\varepsilon = \argmin_{y\in C_{x,\varepsilon}} \|z-y\|^2 = \pi_{C_{x,t_\varepsilon}}(z).
\end{equation*}
So, what we have to show is that $\pi_{C_{x,t_\varepsilon}}(z)\xrightarrow[\varepsilon\to 0]{} \pi_{C_x}(z)$.

First, let us check that $t_\varepsilon\xrightarrow[\varepsilon\to\ 0]{} 0$. The fact that $t_\varepsilon\geq 0$ is clear from the fact that $x^\top y\leq 0$ for all $y\in C$ (and, in particular, for $y=y_\varepsilon$). Moreover, for all $\varepsilon>0$, 
\begin{equation*}
	x^\top z+t_\varepsilon = x^\top (z-y_\varepsilon) \leq x^\top (z-y_\varepsilon)+\frac{\varepsilon}{2}\|z-y_{\varepsilon}\|^2 \leq x^\top (z-y)+\frac{\varepsilon}{2}\|z-y\|^2
\end{equation*}
for all $y\in C$, by definition of $y_{\varepsilon}$. Choosing $y=\pi_{C_x}(z)$ yields that $t_\varepsilon\leq \frac{\varepsilon}{2}\dist(z,C_x)^2$. Therefore, $t_\varepsilon\xrightarrow[\varepsilon\to 0]{} 0$. 

Finally, in order to achieve our objective, it is sufficient to show that given any sequence $(\varepsilon_n)_{n\geq 1}$ of positive number converging to $0$, $y_{\varepsilon_n}\xrightarrow[n\to\infty]{} \pi_{C_x}(z)$. Consider such a sequence. For simplicity, let us denote by $y_n:=y_{\varepsilon_n}$, $t_n:=t_{\varepsilon_n}$ and $C_n:=C_{x,t_n}$.

Let us start by showing that the sequence $(y_n)_{n\geq 1}$ is bounded. As already mentioned above, since we have assumed that $C_x$ is a strict subset of $C$, there must exist some $y\in C$ with $\alpha:=-x^\top y>0$. For all $n\geq 1$, let $\lambda_n=t_n/\alpha$, so that $\lambda_n y\in C_n$ for all $n\geq 1$. Therefore, $\lambda_n y=\pi_{C_n}(\lambda_n y)$ and, since $\pi_{C_n}$ is non-expansive (see Lemma~\ref{lem:charact_proj}), we have that 
\begin{align*}
	\|y_n\| & \leq \|y_n-\lambda_n y\|+\lambda_n\|y\| \\
	& = \|\pi_{C_n}(z)-\pi_{C_n}(\lambda_n y)\|+\lambda_n\|y\| \\
	& \leq \|z-\lambda_n y\|+\lambda_n\|y\| \\
	& \leq \|z\|+2\lambda_n\|y\| \\
	& = \|z\|+2\alpha^{-1}t_n\|y\|
\end{align*}
which is bounded since we have shown, earlier, that $t_n\xrightarrow[n\to\infty]{}0$. The first and last inequalities above are simply the triangle inequality.

Now, since the sequence $(y_n)_{n\geq 1}$ is bounded, in order to prove that it converges to $\pi_{C_x}(z)$, it is sufficient to check that any of its converging subsequences converges to that same point. Up to renumbering, let us simply assume that $y_n\xrightarrow[n\to\infty]{} y^*$ for some $y^*\in\R^d$, and show that $y^*=\pi_{C_x}(z)$. Also, without loss of generality (since we could otherwise consider a further subsequence), let us assume that $(t_n)_{n\geq 1}$ is decreasing. First, since $y_n\in C$ for all $n\geq 1$ and $C$ is closed, it must hold that $y^*\in C$. Moreover, since $-\langle x, y_n\rangle_S=t_n\xrightarrow[n\to\infty]{} 0$ it must hold that $x^\top y^*=0$. Therefore, $y^*\in C_x$. 

Hence, by Lemma~\ref{lem:charact_proj}, in order to check that $y^*=\pi_{C_x}(z)$, it is sufficient to show that for all $y\in C_x$, $(z-y^*)^\top (y-y^*)\leq 0$. 
Let $y\in C_x$ be arbitrary. Let $(w_n)_{n\geq 1}$ be a sequence converging to $y$ and such that $w_n\in C_n$ for all $n\geq 1$. Such a sequence can be constructed, for instance, by taking $w_n$ as the unique intersection of the affine hyperplane $\{w\in\R^d=x^\top w=t_n\}$ with the segment connecting $y_1$ and $y$. Then, since $y_n=\pi_{C_n}(z)$, Lemma~\ref{lem:charact_proj} yields that $(z-y_n)^\top (w_n-y_n)\leq 0$, for all $n\geq 1$. Taking the limit as $n\to\infty$ yields that $(z-y^*)^\top (w-y^*)\leq 0$. This concludes the proof.

\end{proof}

As a consequence of this lemma, we obtain the following result. A closed, convex set $G$ is called locally conic at $y\in G$ if and only if there exists $r>0$ such that $G\cap B(y,r)=(y+C)\cap B(y,r)$ where $C$ is the support cone to $G$ at $y$.

\begin{lemma} \label{lemma:diff-proj-almost-cone}
	Let $G\subseteq\R^d$ be a non-empty, closed, convex set and $x\in\R^d$. If $G$ is locally conic at $\pi_G(x)$, then $\pi_{G}$ has directional derivatives at $x$ given by 
	$$\diff^+\pi_G(x;\cdot)=\pi_{C_u}$$
	where $C$ is the support cone to $G$ at $\pi_G(x)$ and $u=x-\pi_G(x)$. 
\end{lemma}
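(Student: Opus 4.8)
The plan is to reduce everything to the conical situation handled by Lemma~\ref{lem:diff_proj}, the point being that, since $G$ agrees with a translate of its support cone near $\pi_G(x)$, the metric projection onto $G$ will coincide with the projection onto that cone in a neighbourhood of $x$. First I would translate the picture: set $y=\pi_G(x)$ and $u=x-y$, and replace $G$ by $G-y$ and $x$ by $u$. The support cone $C$, the vector $u=x-\pi_G(x)$, the property of being locally conic, and the directional derivative $\diff^+\pi_G(x;\cdot)$ are all invariant under this translation, so it suffices to treat the case $y=0$, i.e.\ $\pi_G(x)=0$ and $x=u$. By Lemma~\ref{lem:charact_proj}, $u$ belongs to the normal cone to $G$ at $0$; writing any $t\in C$ as a limit of vectors $t_n$ with $\varepsilon_n t_n\in G$ (from the definition of the support cone as the closure of the tangent cone) gives $\langle u,\varepsilon_n t_n\rangle\le 0$, hence $\langle u,t\rangle\le 0$ for all $t\in C$, and therefore $\pi_C(u)=0$ as well. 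This is exactly the hypothesis needed to apply Lemma~\ref{lem:diff_proj} to the closed convex cone $C$ at the point $u$.

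The crux is then to show that for all sufficiently small $\varepsilon>0$ one has $\pi_G(u+\varepsilon z)=\pi_C(u+\varepsilon z)$. Both $\pi_G$ and $\pi_C$ are non-expansive (Lemma~\ref{lem:charact_proj}) and both map $u$ to $0$, so $\pi_G(u+\varepsilon z)\to 0$ and $\pi_C(u+\varepsilon z)\to 0$ as $\varepsilon\downarrow 0$; in particular, for $\varepsilon$ small enough, both points lie in the ball $B(0,r)$ furnished by local conicity, on which $G\cap B(0,r)=C\cap B(0,r)$. Consequently $\pi_C(u+\varepsilon z)$ is a feasible competitor for the projection of $u+\varepsilon z$ onto $G$, and $\pi_G(u+\varepsilon z)$ is a feasible competitor for the projection onto $C$. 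Comparing the two minimisation problems yields $\|u+\varepsilon z-\pi_G(u+\varepsilon z)\|\le\|u+\varepsilon z-\pi_C(u+\varepsilon z)\|\le\|u+\varepsilon z-\pi_G(u+\varepsilon z)\|$, so the two distances are equal; by uniqueness of the metric projection onto the convex set $G$ this forces $\pi_G(u+\varepsilon z)=\pi_C(u+\varepsilon z)$.

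Finally I would pass to the limit. Since $\pi_G(u)=0$, for small $\varepsilon>0$ we have $\varepsilon^{-1}\bigl(\pi_G(u+\varepsilon z)-\pi_G(u)\bigr)=\varepsilon^{-1}\pi_C(u+\varepsilon z)$, and Lemma~\ref{lem:diff_proj} says the right-hand side converges, as $\varepsilon\downarrow 0$, to $\pi_{C_u}(z)$ with $C_u=C\cap u^{\perp}$. This simultaneously gives the existence of $\diff^+\pi_G(x;z)$ and the identity $\diff^+\pi_G(x;z)=\pi_{C_u}(z)$; undoing the translation, $u=x-\pi_G(x)$ is precisely the vector in the statement, so $\diff^+\pi_G(x;\cdot)=\pi_{C_u}$. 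The step I expect to be the main obstacle is the localisation argument of the second paragraph: one must ensure that the actual minimisers over $G$ and over $C$ (not merely some near-optimal points) both land in $B(0,r)$, which is what allows the two constrained problems to be identified; this is exactly where non-expansiveness of both projections together with $\pi_G(u)=\pi_C(u)=0$ is used.
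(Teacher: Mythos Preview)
Your proof is correct and follows essentially the same route as the paper: translate so that $\pi_G(x)=0$, verify that $u=x$ lies in the polar of $C$ so that Lemma~\ref{lem:diff_proj} applies, use non-expansiveness of both $\pi_G$ and $\pi_C$ to force the two projections of $u+\varepsilon z$ into $B(0,r)$ for small $\varepsilon$, and then identify $\pi_G(u+\varepsilon z)=\pi_C(u+\varepsilon z)$ via the local coincidence $G\cap B(0,r)=C\cap B(0,r)$. The only cosmetic difference is that the paper routes the last identification through the intermediate object $\pi_{G\cap B(0,r)}=\pi_{C\cap B(0,r)}$, whereas you use a direct competitor argument; both are equivalent one-line observations.
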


\begin{proof}

	Set $y=\pi_G(x)$. Since we have, for all $z\in\R^d$, $\pi_G(z)=y+\pi_G(z-y)$ (this is easy to check using Lemma~\ref{lem:charact_proj} for instance), one may simply assume that $y=0$, without loss of generality. 
	
	Let $r>0$ be such that $G\cap B(0,r)=C\cap B(0,r)$. Let $z\in\R^d$. Since $G$ is locally conic at $y=0$ and $\pi_G$ is continuous (since it is non-expansive, see Lemma~\ref{lem:charact_proj}), $\pi_G(x+\varepsilon z)\in G\cap B(0,r)$ for all small enough $\varepsilon>0$. Hence, $\pi_G(x+\varepsilon z)=\pi_{G\cap B(0,r)}(x+\varepsilon z)=\pi_{C\cap B(0,r)}(x+\varepsilon z)$. 

Now, again using Lemma~\ref{lem:charact_proj}, we have that $\pi_C(x)=0$. Hence, again by continuity of $\pi_C$, it holds that for all small enough $\varepsilon>0$, $\pi_C(x+\varepsilon z)\in B(0,r)$, hence, $\pi_C(x+\varepsilon z)=\pi_{C\cap B(0,r)}(x+\varepsilon z)$ for such small enough $\varepsilon>0$. 

Finally, we have obtained that for all small enough $\varepsilon>0$, 
$$\pi_G(x+\varepsilon z)=\pi_C(x+\varepsilon z).$$
The conclusion follows using Lemma~\ref{lem:diff_proj}.

\end{proof}

An important case of locally conic convex sets is that of convex (possibly unbounded) polyhedra, that is, intersections of finitely many closed affine halfspaces. Indeed, we have the following lemma.

\begin{lemma} \label{lem:polyhedra}
	All convex polyhedras are locally conic at any point. 
\end{lemma}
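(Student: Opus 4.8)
The plan is to write the polyhedron explicitly as $G=\{x\in\R^d: a_j^\top x\le b_j,\ j=1,\ldots,p\}$ for finitely many $a_j\in\R^d$ and $b_j\in\R$; since every point we consider lies in $G$, the set $G$ is non-empty, so any constraint with $a_j=0$ is automatically satisfied and may be discarded, and I assume from now on that $a_j\neq 0$ for all $j$. Fix $y\in G$ and let $J=\{j: a_j^\top y=b_j\}$ be the active set. The first step is to identify the support cone $C$ to $G$ at $y$: I claim it equals the closed polyhedral cone $\{t\in\R^d: a_j^\top t\le 0\text{ for all }j\in J\}$. For the inclusion of the tangent cone $T$ into this set, note that if $y+\varepsilon t\in G$ for all small $\varepsilon>0$ then, for each $j\in J$, $b_j+\varepsilon a_j^\top t=a_j^\top(y+\varepsilon t)\le b_j$, which forces $a_j^\top t\le 0$. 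Conversely, if $a_j^\top t\le 0$ for every $j\in J$, the active constraints remain satisfied at $y+\varepsilon t$ for all $\varepsilon\ge 0$, while each inactive constraint has $a_j^\top y<b_j$, hence $a_j^\top(y+\varepsilon t)<b_j$ for all $\varepsilon$ below a ($t$-dependent) positive threshold; taking the minimum of these finitely many thresholds shows $t\in T$. Since the cone on the right is closed, it is already the support cone $C$.

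The second step is to exhibit a radius $r>0$ with $G\cap B(y,r)=(y+C)\cap B(y,r)$. One inclusion needs no smallness at all: for any $x\in G$, setting $t=x-y$, activeness of the constraints in $J$ gives $a_j^\top t\le 0$ there, so $t\in C$ and $x\in y+C$; thus $G\subseteq y+C$, and in particular $G\cap B(y,r)\subseteq (y+C)\cap B(y,r)$ for every $r>0$. For the reverse inclusion, take $x=y+t$ with $t\in C$: the constraints in $J$ hold since $a_j^\top x=b_j+a_j^\top t\le b_j$, while for each $j\notin J$ the Cauchy--Schwarz bound $a_j^\top x\le a_j^\top y+\|a_j\|\,\|t\|$ shows $a_j^\top x\le b_j$ whenever $\|t\|\le r$, where
\[
r:=\min_{j\notin J}\frac{b_j-a_j^\top y}{\|a_j\|}>0
\]
(any $r>0$ works when every constraint is active, in particular when $p=0$). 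Hence $(y+C)\cap B(y,r)\subseteq G\cap B(y,r)$, and combining the two inclusions gives $G\cap B(y,r)=(y+C)\cap B(y,r)$, i.e.\ $G$ is locally conic at $y$.

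I do not expect a genuine obstacle here: the whole argument is elementary, relying only on $G$ being cut out by finitely many affine inequalities, and it works whether or not $G$ is bounded. The two points that deserve a little care are remembering that the paper's definition of the tangent cone allows the $\varepsilon$-threshold to depend on the direction $t$ — which is exactly what makes the inclusion $C\subseteq T$ legitimate — and the minor bookkeeping of discarding vacuous constraints and handling the degenerate case $J=\emptyset$, where $C=\R^d$ (consistent with $y\in\inter(G)$).
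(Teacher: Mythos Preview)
Your proof is correct and follows essentially the same approach as the paper's: both identify the support cone at $y$ as the polyhedral cone determined by the active constraints, then choose the radius $r$ as the minimum distance from $y$ to the inactive bounding hyperplanes (your $(b_j-a_j^\top y)/\|a_j\|$ is exactly $\dist(y,\partial H_j)$). The only cosmetic difference is that you treat the interior case uniformly via $J=\emptyset$ rather than splitting it off as a separate case.
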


As a consequence of this lemma, for all closed, convex polyhedra $G\subseteq\R^d$, $\pi_G$ has directional derivatives everywhere and $\diff^+\pi_G(x;\cdot)=\pi_{C_{x-\pi_G(x)}}$ for all $x\in\R^d$, where $C$ is the tangent cone (which coincides with the support cone) to $G$ at $x$. Note that this is also a particular case of Lemma~\ref{lem:shapiroSlater} above.

\begin{proof}
	Let $G$ be a convex polyhedra and $x\in\R^d$. If $x\notin G$, the result is vacuous, since the tangent cone to $G$ at $x$ is empty, as well as $G\cap B(0,r)$ for all small enough $r>0$. If $x\in\inter(G)$, the result is also trivial, since in that case, the tangent cone to $G$ at $x$ is $\R^d$. 
	
	Now, let $x\in\partial G$. Write $K=H_1\cap\ldots\cap H_p$ where $H_1,\ldots,H_p$ are closed affine halfspaces and $p\geq 1$ is an integer. Without loss of generality (or else, simply reorder $H_1,\ldots,H_p$), assume that $x\in \partial H_j$ for $j=1,\ldots,r$ and $x\notin \partial H_j$ for $j=r+1,\ldots,p$, for some $r\in\{1,\ldots,p\}$. That is, $H_1,\ldots,H_r$ are exactly those halfspaces whose bounding hyperplane contains $x$. Let $B=(H_1-x)\cap\ldots\cap (H_r-x)$. This is a closed, convex cone, as the intersection of closed, convex cones. Our goal is to show that $B$ coincides with $C$, the support cone to $G$ at $x$. Indeed, then, it is easy to see that $G\cap B(x,r)=(x+B)\cap B(0,r)$ for all small enough $r>0$: It suffices to take any $r\leq \min_{j\geq r+1}\dist(x,\partial H_j)$. 
	
For all $y\in K$, $y\in H_1\cap\ldots\cap H_p\subseteq H_1\cap\ldots\cap H_r$ so $y-x\in (H_1-x)\cap\ldots\cap (H_r-x)=B$. Hence, $B$ contains $G-x$ and, since $B$ is a closed, convex cone, it also contains $C$. Conversely, let $v\in B$ and let us show that $x+\varepsilon v\in G$ for some small enough $\varepsilon>0$. This will yield that $v\in C$. For all $j=1,\ldots,r$, $v\in H_j-x$ so $x+v\in H_j$. Now, by definition of $r$, $x\in\inter(H_j)$ for all $j=r+1,\ldots,p$, so there exists $\varepsilon>0$ such that $x+\varepsilon v\in H_j$ for all $j=r+1,\ldots,p$. Therefore, $x+\min(1,\varepsilon)v\in K$, yielding that $v\in C$ as desired. 
	
\end{proof}

Even though, in general, when $\pi_G$ has directional derivatives at some $x\in\R^d$, it does not necessarily hold that $\diff^+\pi_G(x;\cdot)=\pi_{C_{x-\pi_G(x)}}$, where $C$ is the support cone to $G$ at $\pi_G(x)$, the following result holds true.

\begin{lemma} \label{lem:diff_proj_sanity}

	Let $G\subseteq\R^d$ be a non-empty, closed, convex set and $x\in\R^d$. Let $C$ be the support cone to $G$ at $\pi_G(x)$. Assume that $\pi_G$ has directional derivatives at $x$. Then, for all $z\in\R^d$, $\diff^+\pi_G(x;z)\in C_{x-\pi_G(x)}$.
	
\end{lemma}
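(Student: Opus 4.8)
The plan is to fix $z\in\R^d$, set $y_0=\pi_G(x)$ and $u=x-y_0$, and for $\varepsilon>0$ write $p_\varepsilon=\pi_G(x+\varepsilon z)$ and $d_\varepsilon=\varepsilon^{-1}(p_\varepsilon-y_0)$. By hypothesis $d_\varepsilon$ converges as $\varepsilon\downarrow 0$ to $\Pi z:=\diff^+\pi_G(x;z)$, and by non-expansiveness of $\pi_G$ (Lemma~\ref{lem:charact_proj}) one has $\|d_\varepsilon\|=\varepsilon^{-1}\|\pi_G(x+\varepsilon z)-\pi_G(x)\|\le\|z\|$ for every $\varepsilon>0$. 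The goal splits into two claims: (a) $\Pi z\in C$, and (b) $u^\top\Pi z=0$; together these give $\Pi z\in C\cap u^\perp=C_{x-\pi_G(x)}$, as wanted.

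For (a), I would use that for a convex set $G-y_0$ is contained in the tangent cone to $G$ at $y_0$: indeed, for $y\in G$ and $\varepsilon\in[0,1]$, $y_0+\varepsilon(y-y_0)=(1-\varepsilon)y_0+\varepsilon y\in G$, so $y-y_0$ lies in the tangent cone, hence in its closure $C$. Since $p_\varepsilon\in G$, the vector $d_\varepsilon=\varepsilon^{-1}(p_\varepsilon-y_0)$ lies in the cone $C$; as $C$ is closed, $\Pi z=\lim_{\varepsilon\downarrow0}d_\varepsilon\in C$.

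For (b), I would exploit the two variational characterizations supplied by Lemma~\ref{lem:charact_proj}. Applied to the projection $y_0$ of $x$ with test point $w=p_\varepsilon\in G$, it gives $(p_\varepsilon-y_0)^\top(x-y_0)\le 0$, i.e. $u^\top d_\varepsilon\le 0$; letting $\varepsilon\downarrow0$ yields $u^\top\Pi z\le 0$. Applied to the projection $p_\varepsilon$ of $x+\varepsilon z$ with test point $w=y_0\in G$, it gives $(y_0-p_\varepsilon)^\top\big((x+\varepsilon z)-p_\varepsilon\big)\le 0$. Substituting $y_0-p_\varepsilon=-\varepsilon d_\varepsilon$ and $(x+\varepsilon z)-p_\varepsilon=u+\varepsilon(z-d_\varepsilon)$, expanding, and dividing by $\varepsilon>0$ gives $u^\top d_\varepsilon\ge -\varepsilon\, d_\varepsilon^\top(z-d_\varepsilon)\ge -2\varepsilon\|z\|^2$, where the last step uses $\|d_\varepsilon\|\le\|z\|$. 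Letting $\varepsilon\downarrow0$ yields $u^\top\Pi z\ge 0$, so $u^\top\Pi z=0$.

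This argument is short and presents no genuine difficulty; the only point requiring a bit of care will be the reverse inequality $u^\top\Pi z\ge0$ in step (b), where one must keep the error term $\varepsilon\, d_\varepsilon^\top(z-d_\varepsilon)$ under control using the uniform bound $\|d_\varepsilon\|\le\|z\|$ (boundedness of $d_\varepsilon$, which also follows directly from the assumed existence of the directional derivative, would suffice).
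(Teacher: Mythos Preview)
Your proof is correct and follows essentially the same route as the paper's: both arguments show $\diff^+\pi_G(x;z)\in C$ via closedness of the support cone, and both obtain orthogonality to $u=x-\pi_G(x)$ by applying the variational characterization of the projection (Lemma~\ref{lem:charact_proj}) twice, once with $\pi_G(x)$ projecting $x$ and test point $p_\varepsilon$, once with $p_\varepsilon$ projecting $x+\varepsilon z$ and test point $y_0$. The only cosmetic difference is that the paper writes the second inequality using the expansion $p_\varepsilon=\pi_G(x)+\varepsilon\,\diff^+\pi_G(x;z)+o(\varepsilon)$, while you keep the exact term and bound it explicitly via $\|d_\varepsilon\|\le\|z\|$; both lead to the same limit.
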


\begin{proof}

The fact that $\diff^+\pi_G(x;z)\in C$ is clear from the facts that, for all $\varepsilon>0$, $\varepsilon^{-1}(\pi_G(x+\varepsilon z)-\pi_G(x))\in C$, and $C$ is closed. Hence, we only need to show that $\diff^+\pi_G(x;z)$ is orthogonal to $x-\pi_G(x)$. For all $\varepsilon>0$, Lemma~\ref{lem:charact_proj} yields that 
$$(x+\varepsilon z-\pi_G(x+\varepsilon z))^\top (\pi_G(x)-\pi_G(x+\varepsilon z))\leq 0.$$
Using the fact that $\pi_G(x+\varepsilon z)=\pi_G(x)+\varepsilon \diff^+\pi_G(x;z)+o(\varepsilon)$ as $\varepsilon\to 0$, we obtain
$$-\varepsilon (x-\pi_G(x))^\top \diff^+\pi_G(x;z)+o(\varepsilon)\leq 0,$$
hence, by dividing by $\varepsilon$ and letting $\varepsilon\to 0$, $(x-\pi_G(x))^\top \diff^+\pi_G(x;z)\geq 0$. Moreover, again by Lemma~\ref{lem:charact_proj}, $(x-\pi_G(x))^\top \diff^+\pi_G(x;z)\leq 0$. Hence, we obtain orthogonality of $\diff^+\pi_G(x;z)$ with $x-\pi_G(x)$. 

\end{proof}

\begin{lemma} \label{lem:diff_proj_prop}

Let $G\subseteq\R^d$ be a non-empty, closed, convex set and $x\in\R^d$. Then, $\diff^+\pi_G(x;\cdot)$ is positively homogeneous and non-expansive with respect to $\|\cdot\|$. Moreover, for all $z,z'\in\R^d$,
$$\left(\diff^+\pi_G(x;z')-\diff^+\pi_G(x;z)\right)^\top (z'-z) \geq \|\diff^+\pi_G(x;z')-\diff^+\pi_G(x;z)\|^2\geq 0.$$ 

\end{lemma}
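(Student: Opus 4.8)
The plan is to deduce all three properties from two ingredients already available: the definition of the directional derivative as $\diff^+\pi_G(x;z)=\lim_{\varepsilon\downarrow 0}\varepsilon^{-1}\bigl(\pi_G(x+\varepsilon z)-\pi_G(x)\bigr)$ (which we use, assuming as in Lemma~\ref{lem:diff_proj_sanity} that $\pi_G$ has directional derivatives at $x$), and the inequality \eqref{eqn:charact_proj} established inside the proof of Lemma~\ref{lem:charact_proj}, which with $S=I_d$ reads $\|\pi_G(w')-\pi_G(w)\|^2\leq (\pi_G(w')-\pi_G(w))^\top(w'-w)$ for all $w,w'\in\R^d$.

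For positive homogeneity, fix $z\in\R^d$ and $\lambda>0$; substituting $\varepsilon'=\lambda\varepsilon$ in the defining limit immediately gives $\diff^+\pi_G(x;\lambda z)=\lambda\lim_{\varepsilon'\downarrow 0}\varepsilon'^{-1}\bigl(\pi_G(x+\varepsilon' z)-\pi_G(x)\bigr)=\lambda\,\diff^+\pi_G(x;z)$, and the case $\lambda=0$ is trivial since both sides vanish. For the firm monotonicity estimate — the main content — fix $z,z'\in\R^d$ and apply \eqref{eqn:charact_proj} with $w'=x+\varepsilon z'$ and $w=x+\varepsilon z$, obtaining $\|\pi_G(x+\varepsilon z')-\pi_G(x+\varepsilon z)\|^2\leq \varepsilon\,\bigl(\pi_G(x+\varepsilon z')-\pi_G(x+\varepsilon z)\bigr)^\top(z'-z)$. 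Subtracting the two first-order expansions of $\pi_G$ at $x$ gives $\pi_G(x+\varepsilon z')-\pi_G(x+\varepsilon z)=\varepsilon\bigl(\diff^+\pi_G(x;z')-\diff^+\pi_G(x;z)\bigr)+o(\varepsilon)$ as $\varepsilon\downarrow 0$, so both sides of the displayed inequality are of order $\varepsilon^2$; dividing by $\varepsilon^2$ and letting $\varepsilon\downarrow 0$ yields exactly $\bigl(\diff^+\pi_G(x;z')-\diff^+\pi_G(x;z)\bigr)^\top(z'-z)\geq \|\diff^+\pi_G(x;z')-\diff^+\pi_G(x;z)\|^2$, and the remaining bound by $0$ is immediate.

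Finally, non-expansiveness follows either directly — from $\|\pi_G(x+\varepsilon z')-\pi_G(x+\varepsilon z)\|\leq \varepsilon\|z'-z\|$ (Lemma~\ref{lem:charact_proj}), divided by $\varepsilon$ and passed to the limit — or by combining the firm monotonicity inequality with Cauchy–Schwarz, exactly as in the closing lines of the proof of Lemma~\ref{lem:charact_proj}. The only delicate point in the whole argument is the bookkeeping of the $o(\varepsilon)$ remainder terms when passing from the projection inequality to its infinitesimal version, which is precisely where the existence of the directional derivatives is used; everything else is algebra on inequalities already proved.
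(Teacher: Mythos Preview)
Your proof is correct and follows essentially the same route as the paper's: both apply \eqref{eqn:charact_proj} at the points $x+\varepsilon z$ and $x+\varepsilon z'$, divide by $\varepsilon^2$, and pass to the limit to obtain the firm monotonicity inequality, then derive non-expansiveness via Cauchy--Schwarz. Your bookkeeping of the $o(\varepsilon)$ terms is in fact slightly more explicit than the paper's (which has a minor typo, writing $\varepsilon^{-1}$ where $\varepsilon^{-2}$ is meant), but the underlying argument is identical.
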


\begin{proof}

Positive homogeneity is clear from the definition. 

Using \eqref{eqn:charact_proj}, we have, for all $z,z'\in \R^d$, 
\begin{align*}
	\left(\diff^+\pi_G(x;z')-\diff^+\pi_G(x;z)\right)^\top (z'-z) & = \lim_{\varepsilon\downarrow 0} \varepsilon^{-1}\left( \pi_G(x+\varepsilon z')-\pi_G(x+\varepsilon z)\right)^\top (z'-z) \\
	& \geq \lim_{\varepsilon\downarrow 0} \varepsilon^{-1}\|\pi_G(x+\varepsilon z')-\pi_G(x+\varepsilon z)\|^2 \\
	& = \|\diff^+\pi_G(x;z')-\diff^+\pi_G(x;z)\|^2.
\end{align*}

Finally, non-expansiveness is a direct consequence of the last display, by using Cauchy--Schwarz inequality. 

\end{proof}

\begin{lemma} \label{lem:hardlemma}

Let $G\subseteq \R^d$ be a non-empty, closed, convex set. Fix $x\in\R^d$ and let $f(t)=\|\pi_G(tx)\|$, for all $t\geq 0$. Then, $f$ is non-decreasing and the map $t>0\mapsto f(t)/t$ is non-increasing. 

\end{lemma}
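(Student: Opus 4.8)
The plan is to reduce both statements to the variational characterization of the metric projection in Lemma~\ref{lem:charact_proj}. Fix $0<s<t$ and set $a=\pi_G(sx)$ and $b=\pi_G(tx)$; both belong to $G$. Applying Lemma~\ref{lem:charact_proj} to the projection of $sx$ and testing the inequality against the point $b\in G$, and then to the projection of $tx$ and testing against the point $a\in G$, one gets
\[
\langle b-a,\, sx-a\rangle \le 0 \qquad\text{and}\qquad \langle b-a,\, tx-b\rangle \ge 0 .
\]
Multiplying the first inequality by $t>0$, the second by $s>0$, and chaining them eliminates the common term $st\langle b-a,x\rangle$ and leaves $s\langle b-a,b\rangle\le t\langle b-a,a\rangle$, that is, $\langle b-a,\, sb-ta\rangle\le 0$.

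Next I would expand this last inner product and bound the cross term by Cauchy--Schwarz, $\langle a,b\rangle\le\|a\|\,\|b\|$. Writing $\alpha:=\|a\|=f(s)$ and $\beta:=\|b\|=f(t)$ this gives the scalar inequality $s\beta^{2}-(s+t)\alpha\beta+t\alpha^{2}\le 0$, which factors as $(s\beta-t\alpha)(\beta-\alpha)\le 0$, equivalently $s\,(\beta-\tfrac{t}{s}\alpha)(\beta-\alpha)\le 0$. Since $t/s>1$ and $\alpha\ge 0$, this forces $\beta$ to lie between the two roots, i.e. $\alpha\le\beta\le\tfrac{t}{s}\alpha$ (the degenerate case $\alpha=0$ simply giving $\beta=0$). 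The left inequality reads $f(s)\le f(t)$, so $f$ is non-decreasing on $(0,\infty)$; the right inequality reads $\beta\le\tfrac{t}{s}\alpha$, i.e. $f(t)/t\le f(s)/s$, so $t\mapsto f(t)/t$ is non-increasing on $(0,\infty)$.

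It remains to include $t=0$ in the first assertion. Since $\pi_G$ is non-expansive (Lemma~\ref{lem:charact_proj}) it is continuous, hence so is $f$ on $[0,\infty)$; letting $s\downarrow 0$ in $f(s)\le f(t)$ (valid for every $0<s<t$) yields $f(0)\le f(t)$, so $f$ is non-decreasing on all of $[0,\infty)$. The only mildly delicate point in this argument is choosing the right pair of test points and the right linear combination in the first step so that the term carrying $x$ cancels; once $\langle b-a,\,sb-ta\rangle\le 0$ is in hand, the rest is elementary algebra, so I do not foresee a genuine obstacle.
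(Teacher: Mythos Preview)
Your proof is correct and follows essentially the same route as the paper's: combine the two variational inequalities from Lemma~\ref{lem:charact_proj} to cancel the $x$-term and obtain $\langle b-a,\,sb-ta\rangle\le 0$, then apply Cauchy--Schwarz and factor the resulting quadratic. The only cosmetic differences are that the paper normalizes to $s=1$, $t\ge 1$ (and recovers the general case by rescaling $x$), and that you explicitly handle the endpoint $t=0$ by continuity, which the paper leaves implicit.
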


In other words, the norm of the projection is non-decreasing and has a non-increasing rate of change along any ray starting at $0$. By translating $G$ (noting that for all $x_0,x\in\R^d$, $x_0+\pi_G(x_0+x)=\pi_{G-x_0}(x)$), the lemma also applies to $f$ of the form $f(t)=\|x_0+\pi_G(x_0+tx)\|$ for any choice of $x_0,x\in\R^d$.

\begin{proof}

It is sufficient to show that for all $x\in\R^d$ and $t\geq 1$, 
$$\|\pi_G(x)\|\leq \|\pi_G(tx)\|\leq t\|\pi_G(x)\|.$$

First, Lemma~\ref{lem:charact_proj} yields the following two sets of inequalities:
\begin{equation} \label{eqn:proof111}
	(x-\pi_G(x))^\top (y-\pi_G(x))\leq 0, \quad \forall y\in G
\end{equation}
and
\begin{equation} \label{eqn:proof222}
	(tx-\pi_G(tx))^\top (z-\pi_G(tx))\leq 0, \quad \forall z\in G.
\end{equation}
Take $y=\pi_G(tx)$ and multiply \eqref{eqn:proof111} by $\lambda$, take $z=\pi_G(x)$ in \eqref{eqn:proof222} and sum the resulting inequalities:
$$(\pi_G(tx)-t\pi_G(x))^\top (\pi_G(tx)-\pi_G(x))\leq 0.$$
Expanding and using Cauchy--Schwarz inequality imply that
$$\|\pi_G(tx)\|^2+t\|\pi_G(x)\|^2-(t+1)\|\pi_G(x)\|\|\pi_G(tx)\|\leq 0.$$
Seeing this inequality as a second degree polynomial inequality in $\|\pi_G(tx)\|$ yields that 
$$\|\pi_G(x)\|\leq \|\pi_G(tx)\|\leq t\|\pi_G(x)\|$$
which is the desired result. 
\end{proof}

%
%
%
%
%

Finally, Lemma~\ref{lem:hardlemma} yields the following property of directional derivatives of projections.

\begin{lemma} \label{lem:monotonicity_dirder}

Let $G\subseteq\R^d$ be a non-empty, closed, convex set. Let $x\in\R^d$ and assume that $\pi_G$ has directional derivatives at $x$. Then, $\pi_G$ has directional derivatives at every point along the ray from $\pi_G(x)$ going through $x$, that is, at any point of the form $x_t:=\pi_G(x)+t(x-\pi_G(x))$, $t\geq 0$, and for all $s,t$ with $t>s>0$, and all $z\in\R^d$,
\begin{equation} \label{eqn:comparison_diff_proj}
	\|\diff^+\pi_G(x_t;z)\|\leq \|\diff^+\pi_G(x_s;z)\|.
\end{equation}
\end{lemma}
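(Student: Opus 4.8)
The plan is to reduce to a normalized configuration and then to establish, in turn, the existence of the directional derivatives along the ray and the monotonicity inequality \eqref{eqn:comparison_diff_proj}, both ultimately relying on Lemma~\ref{lem:hardlemma}. Since $\pi_G(w)=\pi_G(x)+\pi_{G-\pi_G(x)}\bigl(w-\pi_G(x)\bigr)$ for all $w\in\R^d$ (as used in the proof of Lemma~\ref{lemma:diff-proj-almost-cone}), after replacing $G$ by $G-\pi_G(x)$ and $x$ by $x-\pi_G(x)$ I may assume $\pi_G(x)=0$, so that $x_t=tx$ for every $t\ge0$. By Lemma~\ref{lem:charact_proj}, $x$ lies in the normal cone $N$ to $G$ at $0$; since $N$ is a cone, $tx\in N$ for all $t\ge0$, and Lemma~\ref{lem:charact_proj} gives $\pi_G(x_t)=0$ for all $t\ge0$. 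If $x\in G$ then $x=0$ and there is nothing to prove; otherwise $0=\pi_G(x)\in\partial G$, so Lemma~\ref{lem:Zarantonello} already yields the directional derivatives of $\pi_G$ at $x_0=0$, and the hypothesis yields them at $x_1=x$.

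\emph{Existence along the ray.} Fix $t>0$ and $z\in\R^d$. Writing $tx+\varepsilon z=t\bigl(x+(\varepsilon/t)z\bigr)$ and setting $\varepsilon=t\delta$, it suffices to show that $\frac{1}{t\delta}\,\pi_G\bigl(t(x+\delta z)\bigr)$ converges as $\delta\downarrow0$. Put $q_\delta=\pi_G(x+\delta z)$ and $\widetilde q_\delta=\pi_G\bigl(t(x+\delta z)\bigr)$, so that $q_\delta/\delta\to D:=\diff^+\pi_G(x;z)$ by hypothesis, while Lemma~\ref{lem:hardlemma} along the ray $\lambda\mapsto\lambda(x+\delta z)$ gives $\|\widetilde q_\delta\|\le\max(1,t)\,\|q_\delta\|=O(\delta)$, so $\bigl(\widetilde q_\delta/(t\delta)\bigr)_{\delta>0}$ is bounded. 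Inserting these two points into Lemma~\ref{lem:charact_proj} — testing $q_\delta$ against $\widetilde q_\delta\in G$, testing $\widetilde q_\delta$ against $q_\delta\in G$ and against $0\in G$, and combining — one obtains
\[
\bigl\langle\widetilde q_\delta-t\,q_\delta,\ \widetilde q_\delta-q_\delta\bigr\rangle\le0
\qquad\text{and}\qquad
0\le-\langle x,\widetilde q_\delta\rangle\le t\delta^2\|z\|^2 ,
\]
so that every accumulation point $a$ of $\widetilde q_\delta/(t\delta)$ satisfies $\langle x,a\rangle=0$ and $t\|a\|^2+\|D\|^2\le(1+t)\langle a,D\rangle$. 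These relations only confine $a$ to a segment; to pin it down I would pass to the limit in the variational inequality $\langle tx+t\delta z-\widetilde q_\delta,\ h-\widetilde q_\delta\rangle\le0$ ($h\in G$), separating the test points $h$ lying near $0$ — where Lemma~\ref{lem:Zarantonello}, together with the assumed directional differentiability of $\pi_G$ at $x$, captures the relevant second-order geometry of $\partial G$ at $0$ in the normal direction $x$ — from those lying away from $0$. This should produce a limiting, strictly convex variational problem whose unique solution is $a$, forcing $\widetilde q_\delta/(t\delta)\to a$ and hence the existence of $\diff^+\pi_G(x_t;z)$. I expect this identification of $a$ — i.e.\ the uniqueness of the limit — to be the main obstacle; everything around it is routine.

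\emph{The monotonicity inequality.} Fix $0<s<t$, $z\in\R^d$ and $\varepsilon>0$, and set $\varepsilon'=(s/t)\,\varepsilon$. The identity
\[
x_t+\varepsilon z=\pi_G(x)+\frac{t}{s}\Bigl((x_s+\varepsilon' z)-\pi_G(x)\Bigr)
\]
shows that $x_t+\varepsilon z$ lies on the ray issued from $\pi_G(x)$ through $x_s+\varepsilon' z$, at parameter $t/s>1$. Applying Lemma~\ref{lem:hardlemma} to the translate $G-\pi_G(x)$ — along which $\lambda\mapsto\lambda^{-1}\bigl\|\pi_G(\pi_G(x)+\lambda w)-\pi_G(x)\bigr\|$ is non-increasing, with $w=(x_s+\varepsilon' z)-\pi_G(x)$ — and comparing $\lambda=1$ with $\lambda=t/s$, I get
\[
\frac{s}{t}\,\bigl\|\pi_G(x_t+\varepsilon z)-\pi_G(x)\bigr\|\le\bigl\|\pi_G(x_s+\varepsilon' z)-\pi_G(x)\bigr\| .
\]
Dividing by $\varepsilon$ and using $\varepsilon=(t/s)\varepsilon'$ together with $\pi_G(x_t)=\pi_G(x_s)=\pi_G(x)$ gives
\[
\varepsilon^{-1}\bigl\|\pi_G(x_t+\varepsilon z)-\pi_G(x_t)\bigr\|\le(\varepsilon')^{-1}\bigl\|\pi_G(x_s+\varepsilon' z)-\pi_G(x_s)\bigr\| ;
\]
letting $\varepsilon\downarrow0$ (hence $\varepsilon'\downarrow0$) and invoking the existence proved above, we conclude $\|\diff^+\pi_G(x_t;z)\|\le\|\diff^+\pi_G(x_s;z)\|$, which is \eqref{eqn:comparison_diff_proj}.
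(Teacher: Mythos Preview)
Your monotonicity argument is essentially identical to the paper's: both reduce to $\pi_G(x)=0$, rewrite $x_t+\varepsilon z$ as a dilation of $x_s+\varepsilon' z$ with $\varepsilon'=(s/t)\varepsilon$, apply Lemma~\ref{lem:hardlemma} to compare the two projections, divide by $\varepsilon$, and let $\varepsilon\downarrow 0$.

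The gap is in your existence step. The paper does not prove existence of $\diff^+\pi_G(x_t;\cdot)$ for $t>0$ at all; it simply invokes \cite[Proposition 2.2]{noll1995directional}, which states (with a formula) that directional differentiability of $\pi_G$ at one point of the normal ray through $\pi_G(x)$ implies it at every point of that ray. Your attempt to prove this directly is more ambitious, but it is not a proof. You correctly establish boundedness of $\widetilde q_\delta/(t\delta)$ and derive the two relations $\langle x,a\rangle=0$ and $\langle a-D,\,ta-D\rangle\le 0$ for any accumulation point $a$; as you yourself note, these do not determine $a$ uniquely (the second places $a$ in a ball with diameter $[D/t,D]$, and intersecting with $x^\perp$ still leaves a set of positive dimension in general). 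Your proposed remedy --- passing to the limit in the variational inequality for $\widetilde q_\delta$, separating test points near and far from $0$, and obtaining a ``limiting, strictly convex variational problem'' --- is only a sketch: you have not said what that limiting problem is, nor why it is strictly convex, nor why Lemma~\ref{lem:Zarantonello} together with differentiability at $x$ supplies the required second-order information about $\partial G$ at $0$. This is precisely the nontrivial content of the Noll result, and it cannot be waved through. Either cite the external proposition, as the paper does, or carry the limiting variational characterization through to the end.
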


Note that $\pi_G$ automatically admits directional derivatives at $x_0=\pi_G(x)$, since $x_0\in G$.

\begin{proof}

The existence of directional derivatives at any $x_t,t>0$ follows from \cite[Proposition 2.2]{noll1995directional}. Following the proof of that proposition, we also obtain that for all $t>0$ and $z\in\R^d$, 
$$\diff^+\pi_G(x_t;z)=\diff^+\pi_G(x;A_t^{-1}(z))$$
where $A_t:\R^d\to\R^d$ is the bijective map defined as $A_t=tI_d+(1-t)\diff^+\pi_G(x;\cdot)$. 
In the rest of the proof, let us assume that $x_0=\pi_G(x)=0$, without loss of generality (we could simply translate $G$ without affecting the inequality that remains to be proven). Fix $s,t$ with $t>s>0$ and $z\in\R^d$. Then, for all $\varepsilon>0$, Lemma~\ref{lem:hardlemma} yields that
\begin{align*}
	\frac{\|\pi_G(x_t+\varepsilon z)\|}{\varepsilon} & = \frac{\|\pi_G(tx+\varepsilon z)\|}{\varepsilon} \\
	& = \frac{\|\pi_G(t(x+(\varepsilon/t) z))\|}{\varepsilon} \\
	& \leq \frac{t}{s}\frac{\|\pi_G(s(x+(\varepsilon/t) z))\|}{\varepsilon} \\
	& = \frac{\|\pi_G(x_s+(s\varepsilon/t) z))\|}{s\varepsilon/t}
\end{align*}
and taking the limit as $\varepsilon\to 0$ implies that $\|\diff^+\pi_G(x_t;z)\|\leq  \|\diff^+\pi_G(x_s;z)\|$.

\end{proof}

The following result allows to extend \eqref{eqn:comparison_diff_proj} to $s=0$. 

\begin{lemma} \label{lem:monotonicity_dirder2}

Let $G\subseteq\R^d$ be a non-empty, closed, convex set. Let $x\in\R^d$ and assume that $\pi_G$ has directional derivatives at $x$. Then, for all $z\in\R^d$, 
$$\|\diff^+\pi_G(x;z)\|\leq \|\diff^+\pi_G(\pi_G(x);z)\|.$$

\end{lemma}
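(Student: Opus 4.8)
The plan is to reduce to the case $\pi_G(x)=0$ and to compare $\pi_G$ with the metric projection onto the support cone $C$ of $G$ at $0$. Translating $G$ by $-\pi_G(x)$ affects neither norms nor directional derivatives, so I would assume $\pi_G(x)=0$ and write $C$ for the support cone to $G$ at $0$, so that $\diff^+\pi_G(\pi_G(x);\cdot)=\diff^+\pi_G(0;\cdot)=\pi_C$ by Lemma~\ref{lem:Zarantonello} when $0\in\partial G$ (the case $0\in\inter(G)$ forces $x=0$ and the statement is then a triviality). By Lemma~\ref{lem:charact_proj}, $\langle x-\pi_G(x),y-\pi_G(x)\rangle\le 0$ for all $y\in G$, i.e.\ $\langle x,y\rangle\le 0$ for all $y\in G$; since $\varepsilon v\in G$ for small $\varepsilon>0$ whenever $v$ lies in the tangent cone to $G$ at $0$, this passes to the closure and gives $\langle x,v\rangle\le 0$ for all $v\in C$, hence $\pi_C(x)=0$ by Lemma~\ref{lem:charact_proj} again.

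Next I would establish the pointwise domination $\|\pi_G(w)\|\le\|\pi_C(w)\|$ for every $w\in\R^d$. By Lemma~\ref{lem:hardlemma}, the map $t\mapsto\|\pi_G(tw)\|/t$ is non-increasing on $(0,\infty)$, hence dominated by its limit as $t\downarrow 0$; using $\pi_G(0)=0$ and the identification $\diff^+\pi_G(0;\cdot)=\pi_C$, that limit equals $\|\pi_C(w)\|$, and taking $t=1$ gives the claim. Applying this with $w=x+\varepsilon z$, and using $\pi_G(x)=\pi_C(x)=0$ together with the hypothesis that $\diff^+\pi_G(x;z)$ exists, I obtain
\begin{equation*}
\|\diff^+\pi_G(x;z)\|=\lim_{\varepsilon\downarrow 0}\frac{\|\pi_G(x+\varepsilon z)\|}{\varepsilon}\le\lim_{\varepsilon\downarrow 0}\frac{\|\pi_C(x+\varepsilon z)\|}{\varepsilon}=\|\diff^+\pi_C(x;z)\|,
\end{equation*}
where the last limit exists because $C$ is a closed convex cone with $\langle x,v\rangle\le 0$ for all $v\in C$; by Lemma~\ref{lem:diff_proj} it equals $\|\pi_{C_x}(z)\|$, with $C_x=C\cap x^{\perp}$.

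It then remains to prove $\|\pi_{C_x}(z)\|\le\|\pi_C(z)\|$. Here $C_x\subseteq C$ are nested closed convex cones, and for any closed convex cone $K$ the variational characterization of Lemma~\ref{lem:charact_proj}, applied with $y=0$ and $y=2\pi_K(z)$, gives $\langle\pi_K(z),z-\pi_K(z)\rangle=0$, whence the Pythagorean identity $\|\pi_K(z)\|^2=\|z\|^2-\dist(z,K)^2$. Since $\dist(z,C_x)\ge\dist(z,C)$, this yields $\|\pi_{C_x}(z)\|\le\|\pi_C(z)\|$. Chaining the inequalities and undoing the translation gives $\|\diff^+\pi_G(x;z)\|\le\|\pi_C(z)\|=\|\diff^+\pi_G(\pi_G(x);z)\|$, as wanted. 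The one genuinely delicate step is the pointwise domination $\|\pi_G(w)\|\le\|\pi_C(w)\|$, which is exactly where Lemma~\ref{lem:hardlemma} is used; the reduction to the cone $C$ and the final sub-cone comparison are then routine consequences of Lemmas~\ref{lem:diff_proj} and \ref{lem:charact_proj}.
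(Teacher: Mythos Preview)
Your proof is correct and takes a genuinely different route from the paper's. The paper argues directly: after reducing to $\pi_G(x)=0$, it bounds $\|\pi_G(x+z)\|^2$ by a chain of inequalities obtained from repeated applications of Lemma~\ref{lem:charact_proj}, arriving at $\|\pi_G(x+z)\|^2\le (z-\pi_G(z))^\top\pi_G(z)+\pi_G(z)^\top\pi_G(x+z)$, then replaces $z$ by $\varepsilon z$, divides by $\varepsilon^2$, and lets $\varepsilon\downarrow 0$; after identifying $\diff^+\pi_G(0;\cdot)=\pi_C$ via Lemma~\ref{lem:Zarantonello} and using $(z-\pi_C(z))^\top\pi_C(z)=0$, Cauchy--Schwarz finishes.

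You instead factor through the support cone $C$: first use Lemma~\ref{lem:hardlemma} to obtain the global domination $\|\pi_G(w)\|\le\|\pi_C(w)\|$ (via the non-increasing ratio $t\mapsto\|\pi_G(tw)\|/t$ and its limit $\|\pi_C(w)\|$ at $t\downarrow 0$), then pass to the directional derivative of $\pi_C$ via Lemma~\ref{lem:diff_proj} to get $\|\diff^+\pi_G(x;z)\|\le\|\pi_{C_x}(z)\|$, and finally compare the nested cones $C_x\subseteq C$ through the Pythagorean identity $\|\pi_K(z)\|^2=\|z\|^2-\dist(z,K)^2$. Your argument is more geometric and yields two intermediate inequalities of independent interest ($\|\pi_G(w)\|\le\|\pi_C(w)\|$ and $\|\pi_{C_x}(z)\|\le\|\pi_C(z)\|$), at the cost of invoking Lemmas~\ref{lem:hardlemma} and~\ref{lem:diff_proj}; the paper's approach is shorter and uses only Lemma~\ref{lem:charact_proj} beyond the identification $\diff^+\pi_G(0;\cdot)=\pi_C$. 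One minor presentational point: your case split ``$0\in\inter(G)$ versus $0\in\partial G$'' would be cleaner as ``$x\in G$ versus $x\notin G$'' (if $x\in G$ then $x=\pi_G(x)=0$ and the statement is trivial; if $x\notin G$ then automatically $0\in\partial G$), though your version is not wrong since the remaining subcase $x=0\in\partial G$ is still covered by your subsequent argument.
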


Note that this lemma is not a consequence of \eqref{eqn:comparison_diff_proj} in Lemma~\ref{lem:monotonicity_dirder} because $s\geq 0\mapsto \|\diff^+\pi_G(x_s;z)\|$, for fixed $z\in\R^d$, is not always continuous at $s=0$. Take, for instance, $G=B(0,1)$, $x\in\R^d$ with $\|x\|>1$ and $z=-x$. 

\begin{proof}
	Without loss of generality, let us assume that $0\in G$ and $\pi_G(x)=0$. First, if $x\in G$, then $\pi_G(x)=x$ and the result is trivial. Assume that $x\notin G$. Fix $z\in\R^d$ and let $\varepsilon>0$. Then, we have
\begin{align}
	\|\pi_G(x+z)\|^2 & = (x+z)^\top \pi_G(x+z)-(x+z-\pi_G(x+z))^\top \pi_G(x+z) \nonumber \\
	& \leq (x+z)^\top \pi_G(x+z) \quad \mbox{ by Lemma~\ref{lem:charact_proj}} \nonumber \\
	& \leq z^\top \pi_G(x+z) \quad \mbox{ by Lemma~\ref{lem:charact_proj}, noting that } \pi_G(x)=0 \nonumber \\
	& = z^\top \pi_G(z)+z^\top(\pi_G(x+z)-\pi_G(z)) \nonumber \\
	& \leq z^\top \pi_G(z)+\pi_G(z)^\top(\pi_G(x+z)-\pi_G(z)) \quad \mbox{ by Lemma~\ref{lem:charact_proj}}\nonumber \\
	& = (z-\pi_G(z))^\top \pi_G(z)+\pi_G(z)^\top\pi_G(x+z). \label{eqn:hard45}
\end{align}
Now, replacing $z$ with $\varepsilon z$ in \eqref{eqn:hard45}, dividing by $\varepsilon^2$ and letting $\varepsilon\downarrow 0$, we obtain that
\begin{equation} \label{eqn:hard55}
	\|\diff^+\pi_G(x;z)\|^2\leq (z-\diff^+\pi_G(0;z))^\top \diff^+\pi_G(0;z)+\diff^+\pi_G(0;z)^\top \diff^+\pi_G(x;z).
\end{equation}
Note that $\pi_G$ has directional derivatives at $0$ since we have assumed that $0\in G$. Moreover, since $x\notin G$, $0=\pi_G(x)$ must be on the boundary of $G$. Hence, by Lemma~\ref{lem:Zarantonello}, $\diff^+\pi_G(0;\cdot)=\pi_C$ where $C$ is the support cone to $G$ at $0$. Therefore, $(z-\diff^+\pi_G(0;z))^\top \diff^+\pi_G(0;z)=(z-\pi_C(z))^\top \pi_C(z)$. Now, note that by Lemma~\ref{lem:charact_proj}, for all $y\in C$, we have that 
$$(z-\pi_C(z))^\top (y-\pi_C(z))\leq 0.$$
Taking $y=0$ on the one hand, and $y=2\pi_C(z)$ on the second hand, yields that $(z-\pi_C(z))^\top \pi_C(z)= 0$. Therefore, continuing \eqref{eqn:hard55}, we obtain that 
$$\|\diff^+\pi_G(x;z)\|^2\leq +\diff^+\pi_G(0;z)^\top \diff^+\pi_G(x;z)$$
which is bounded by $\|\diff^+\pi_G(0;z)\|\|\diff^+\pi_G(x;z)\|$ by Cauchy--Schwarz inequality. The desired result follows readily.
\end{proof}

%
%
%


%
%
%

\section{Adaptation of the convergence results for $U$-estimators} \label{sec:appendix_extra_proofs}

In this section, we assume that the loss function $\phi:E^k\times\Theta_0\to\R$ is symmetric and measurable in its first $k$ arguments and convex in its last, and that for all $\theta\in\Theta_0$, $\phi(\cdot,\theta)\in L^1(P^{\otimes k})$. This allows to define the population risk $\Phi(\theta)=\E[\phi(X_1,\ldots,X_k,\theta)]$ for all $\theta\in\Theta_0$. For all $n\geq k$, we define the empirical risk $\Phi_n$ as $\Phi_n(\theta)=\frac{1}{{n\choose k}}\sum_{1\leq i_1<\ldots<i_k\leq n}\phi(X_{i_1},\ldots,X_{i_k},\theta)$, for all $\theta\in\Theta_0$. 

For simplicity, for every subset $I\subseteq\{1,\ldots,n\}$ of size $k$, we denote by $X_I$ the vector $(X_{i_1},\ldots,X_{i_k})$ where $i_1<\ldots<i_k$ are the elements of $I$ ordered in increasing order. We also denote by $\mathcal P_{k,n}$ the collection of all subsets of size $k$ of $\{1,\ldots,n\}$.

\subsection{Non-differentiable case}

Let us prove the analog of Proposition~\ref{prop:non-smooth} for $U$-estimators. Analogs of Theorems~\ref{thm:non-diff} and Theorems~\ref{thm:First_order_asymp} will follow directly. 

\begin{proposition} \label{prop:LLN_U}
	Assume that $\phi(\cdot,\theta)\in L^2(P^{\otimes k})$ for all $\theta\in\Theta_0$. Let $(\rho_n)_{n\geq 1}$ be any non-decreasing sequence of positive numbers diverging to $\infty$ as $n\to\infty$. Then, for all $\theta\in\Theta_0$ and $t\in\R^d$, 
	$$\rho_n(\Phi_n(\theta+t/\rho_n)-\Phi_n(\theta))\xrightarrow[n\to\infty]{} h_{\partial\Phi(\theta)}(t)$$
in probability.

\end{proposition}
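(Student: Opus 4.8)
The plan is to mimic the proof of Proposition~\ref{prop:non-smooth}, replacing the empirical mean by a $U$-statistic and using the Hoeffding decomposition (or, more elementarily, a variance bound for $U$-statistics) in place of the variance bound for i.i.d.\ sums. Fix $\theta\in\Theta_0$ and a direction $t\in\R^d$. Let $g$ be a measurable selection of subgradients of $\phi$ in its last argument, which exists by Theorem~\ref{thm:measurable_subgradient}, and abbreviate $g_\theta=g(\cdot,\theta)$, viewed as a measurable, symmetric map on $E^k$ with values in $\R^d$ (symmetry in the first $k$ arguments follows since $\phi$ is symmetric there; if one prefers, symmetrize $g$ by averaging over permutations). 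For $I\in\mathcal P_{k,n}$ set
$$
Z_{I,n}=\frac{\rho_n}{\binom{n}{k}}\Bigl(\phi(X_I,\theta+t/\rho_n)-\phi(X_I,\theta)-(1/\rho_n)\,t^\top g_\theta(X_I)\Bigr),
$$
so that, as in Proposition~\ref{prop:non-smooth}, convexity of $\phi(X_I,\cdot)$ gives $0\le Z_{I,n}\le \binom{n}{k}^{-1}t^\top\bigl(g_\theta^{(n)}(X_I)-g_\theta(X_I)\bigr)$ where $g_\theta^{(n)}=g(\cdot,\theta+t/\rho_n)$. Define
$$
F_n=\sum_{I\in\mathcal P_{k,n}}\bigl(Z_{I,n}-\E[Z_{I,n}]\bigr),
$$
which (after rearranging exactly as in the $M$-estimation case) equals $\rho_n(\Phi_n(\theta+t/\rho_n)-\Phi_n(\theta))$ minus a centered $U$-statistic with kernel $t^\top g_\theta$ minus the deterministic term $\rho_n(\Phi(\theta+t/\rho_n)-\Phi(\theta))$.

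The key step is to show $F_n\to 0$ in $L^2$, hence in probability. For this I would invoke the classical bound on the variance of a $U$-statistic: writing $U_n$ for the $U$-statistic with (mean-zero, after centering) kernel $h_n(x_1,\dots,x_k)=t^\top(g_\theta^{(n)}-g_\theta)(x_1,\dots,x_k)$, one has $\var(U_n)\le \frac{C_k}{n}\,\E[h_n(X_1,\dots,X_k)^2]$ for a constant $C_k$ depending only on $k$ (see, e.g., the references on $U$-statistics cited before Theorem~\ref{thm:CLT_U}, or Hoeffding's original variance formula). The scalar random variable $Y_n:=t^\top(g(X_1,\dots,X_k,\theta+t/\rho_n)-g(X_1,\dots,X_k,\theta))$ is non-negative by monotonicity of subgradients (Lemma~\ref{lem:monoton_subgrad}) and non-increasing in $n$ by the same monotonicity argument used in Proposition~\ref{prop:non-smooth}, since $(\rho_n)$ is non-decreasing; hence $\E[Y_n^2]\le\E[Y_1^2]<\infty$ (finite by the $L^2$ assumption and Theorem~\ref{thm:measurable_subgradient}). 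Therefore $\E[F_n^2]=\var\Bigl(\sum_I Z_{I,n}\Bigr)\le \frac{C_k}{n}\,\E[Y_1^2]\to 0$.

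Finally, rewrite $\rho_n(\Phi_n(\theta+t/\rho_n)-\Phi_n(\theta))=F_n+\bigl(\text{centered }U\text{-statistic with kernel }t^\top g_\theta\bigr)+\rho_n(\Phi(\theta+t/\rho_n)-\Phi(\theta))$. The centered $U$-statistic converges to $0$ in probability by the law of large numbers for $U$-statistics (Theorem~\ref{thm:LLN-U}), and the deterministic term converges to $\diff^+\Phi(\theta;t)=h_{\partial\Phi(\theta)}(t)$ by definition of the directional derivative together with Lemma~\ref{lem:dirder_support}. Combining the three pieces gives the claim. The main obstacle is purely bookkeeping: correctly identifying and bounding the variance of the $U$-statistic built from the ``remainder'' kernel $Z_{I,n}$, i.e.\ verifying that the $\frac1n$-rate of the $U$-statistic variance bound, combined with the monotone-in-$n$ control $\E[Y_n^2]\le\E[Y_1^2]$, suffices; no genuinely new idea beyond the $M$-estimation proof is needed, only the substitution of the i.i.d.\ variance identity by its $U$-statistic analogue.
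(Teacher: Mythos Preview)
Your proposal is correct and follows essentially the same route as the paper: define the remainder $F_n$, bound the kernel pointwise via convexity and subgradient monotonicity, control the variance of the resulting $U$-statistic, then handle the linear $U$-statistic term by the law of large numbers (Theorem~\ref{thm:LLN-U}) and the deterministic term by Lemma~\ref{lem:dirder_support}. The only cosmetic difference is that the paper carries out the variance computation by hand (splitting over $\#(I\cap J)=j$ and counting pairs) rather than quoting the classical bound $\var(U_n)\le C_k n^{-1}\E[h^2]$; note, incidentally, that this bound should be applied directly to $F_n$ viewed as a centered $U$-statistic with kernel $\tilde h_n:=\rho_n\bigl(\phi(\cdot,\theta+t/\rho_n)-\phi(\cdot,\theta)-\rho_n^{-1}t^\top g_\theta\bigr)$, using $0\le \tilde h_n\le Y_n$, rather than to the auxiliary $U_n$ with kernel $h_n$ you introduce.
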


\begin{proof}
	Similarly to the proof of Proposition~\ref{prop:non-smooth}, fix $t\in\R^d$ and define 
\begin{align*}
F_n(t) & = \rho_n\left(\Phi_n(\theta+t/\rho_n)-\Phi_n(\theta)-\frac{1}{{n\choose k}\rho_n}t^\top \sum_{I\in\mathcal P_{n,k}} g(X_I,\theta)\right) \\
& \quad\quad\quad -\rho_n\left(\Phi(\theta+t/\rho_n)-\Phi(\theta)-\frac{1}{\rho_n}t^\top \E[g(X_1,\ldots,X_k,\theta)]\right)
\end{align*}
and write $\DS F_n(t)=\sum_{I\in\mathcal P_{n,k}}(Z_{I,n}-\E[Z_{I,n}])$
where we set 
$$Z_{I,n}=\frac{\rho_n}{{n\choose k}}\left(\phi(X_I,\theta+t/\rho_n)-\phi(X_I,\theta)-(1/\rho_n)t^\top g(X_I,\theta)\right)$$ 
for all $I\in\mathcal P_{n,k}$.
Lemma~\ref{lem:monoton_subgrad} yields that 
\begin{align*}
	0 \leq Z_{I,n} & \leq \frac{1}{{n\choose k}}t^\top(g(X_I,\theta+t/\rho_n)-g(X_I,\theta)) \\ 
	& \leq \frac{1}{{n\choose k}}t^\top(g(X_I,\theta+t/\rho_1)-g(X_I,\theta))
\end{align*}
for all $I\in\mathcal P_{n,k}$. Denoting by $Y_I=t^\top(g(X_I,\theta+t/\rho_1)-g(X_I,\theta))$ for all $I\in\mathcal P_{n,k}$, we obtain that for all large enough $n$ ($n\geq 2k$ suffices)
\begin{align*}
	\var(F_n(t)) & = \var\left(\sum_{I\in \mathcal P_{n,k}} Z_{I,n}\right) = \sum_{I,J\in \mathcal P_{n,k}, I\cap J\neq\emptyset} \cov(Z_{I,n},Z_{J,n}) \\
	& \leq \frac{1}{{n\choose k}^2}\sum_{I,J\in \mathcal P_{n,k}, I\cap J\neq\emptyset} \E[Y_{I}Y_{J}] \\ 
	& = \frac{1}{{n\choose k}^2}\sum_{j=1}^k {n\choose k}{k\choose j}{{n-k}\choose {k-j}}\alpha_j \\
	& = \frac{1}{{n\choose k}}\sum_{j=1}^k {k\choose j}{{n-k}\choose {k-j}}\alpha_j
\end{align*}
where $\alpha_j=\E[Y_IY_J]$ for any two sets $I,J\in\mathcal P_{n,k}$ with $\#(I\cap J)=j$, $j=1,\ldots,k$ ($\#A$ stands for the cardinality of a set $A$). In the second equality, we used the fact that $Z_{I,n}$ and $Z_{J,n}$ are independent if $I\cap J=\emptyset$. In the second to last equality, we used the fact that for $j=1,\ldots,k$, the number of pairs of sets $I,J\in\mathcal P_{n,k}$ with $\#(I\cap J)=j$ is ${n\choose k}{k\choose j}{{n-k}\choose {k-j}}$ (choose $I$ first, then $j$ elements in $I$ and $k-j$ outside of $I$ to obtain $J$). 

Note that $\alpha_1,\ldots,\alpha_k$ do not depend on $n$, and each term in the product is of order at most $1/n$. Hence, $\var(F_n(t))\xrightarrow[n\to\infty]{}0$ so $F_n(t)\xrightarrow[n\to\infty]{}0$ in probability. 

By Theorem~\ref{thm:LLN-U}, $\DS \frac{1}{{n\choose k}} \sum_{I\in\mathcal P_{n,k}} g(X_I,\theta) \xrightarrow[n\to\infty]{}\E[g(X_1,\ldots,X_k,\theta)]$ almost surely, hence, in probability, and Lemma~\ref{lem:dirder_support} yields that $\rho_n\left(\Phi(\theta+t/\rho_n)-\Phi(\theta)\right)\xrightarrow[n\to\infty]{}h_{\partial\Phi(\theta)}(t)$. Hence, we obtain the desired result. 

\end{proof}

\subsection{Proof of Theorem~\ref{sec:appendix_extra_proofs}}

As in the proof of Theorem~\ref{thm:asympt_norm}, fix $R>0$ and let 
$$F_n(t)=n\left(\Phi_n(\theta^*+t/\sqrt n)-\Phi_n(\theta^*)\right)-\left(\frac{\sqrt n}{{n\choose k}}t^\top\sum_{I\in\mathcal P_{n,k}}g(X_I,\theta^*)+\frac{1}{2}t^\top St\right)$$
for all $t\in B_S(0,R)$, for all large enough $n$ so $B_S(\theta^*,R/\sqrt n)\subseteq \Theta_0$, and where $S=\nabla^2\Phi(\theta^*)$. Let us show that for all $t\in B_S(0,R)$, $F_n(t)\xrightarrow[n\to\infty]{}0$ in probability. For this, we let 
$$Z_{I,n}=\frac{n}{{n\choose k}}\left(\phi(X_I,\theta^*+t/\sqrt n)-\phi(X_I,\theta^*)-\frac{t^\top}{\sqrt n}g(X_I,\theta^*)\right)$$
for each $I\in\mathcal P_{n,k}$.
Now, we note that for each $I\in\mathcal P_{n,k}$,
$$0\leq Z_{I,n}\leq \frac{\sqrt n}{{n\choose k}}\left(g(X_I,\theta^*+t/\sqrt n)-g(X_I,\theta^*)\right)$$
thanks to Lemma~\ref{lem:monoton_subgrad}. Setting $Y_{I,n}=g(X_I,\theta^*+t/\sqrt n)-g(X_I,\theta^*)$ for all $I\in\mathcal P_{n,k}$, we obtain:
\begin{align*}	\var\left(\sum_{I\in\mathcal P_{n,k}}Z_{I,n}\right) & = \sum_{I,J\in\mathcal P_{n,k}, I\cap J\neq\emptyset} \cov(Z_{I,n},Z_{J,n}) \\
	& \leq \frac{n}{{n\choose k}^2} \sum_{I,J\in\mathcal P_{n,k}, I\cap J\neq\emptyset} \E[Y_{I,n}Y_{J,n}] \\
	& = \frac{n}{{n\choose k}} \sum_{j=1}^k {k\choose j}{{n-k}\choose {k-j}} a_{j,n}
\end{align*}
where, for all $j=1,\ldots,k$, $a_{j,n}=\E[Y_{I,n}Y_{J,n}]$ for any fixed $I,J\in\mathcal P_{n,k}$ with $\#(I\cap J)=j$. Fix $I_0=\{1,\ldots,k\}$ and $J_0=\{1,\ldots,j,k+1,\ldots,2k-j\}$. Now, just as in the proof of Theorem~\ref{thm:asympt_norm}, note that $(Y_{I_0,n})_{n\geq 1}$ is a non-increasing sequence (by Lemma~\ref{lem:monoton_subgrad}) of non-negative random variables, hence, it converges almost surely to some non-negative random variable $Y_{I_0}$. By monotone convergence, we must then have that $\E[Y_{I_0,n}]\xrightarrow[n\to\infty]{} \E[Y_{I_0}]$. Yet, $\E[Y_{I_0,n}]=\nabla\Phi(\theta^++t/\sqrt n)-\nabla\Phi(\theta^*)$, which goes to $0$ as $n\to\infty$, since $\Phi$ is twice differentiable at $\theta^*$, hence $\nabla\Phi$ is continuous at $\theta^*$. Therefore, $\E[Y_{I_0}]=0$, which yields that $Y_{I_0}=0$ almost surely. Similarly, $Y_{J_0,n}\xrightarrow[n\to\infty]{} 0$ almost surely and, now, monotone convergence implies that $\E[Y_{I_0,n}Y_{J_0,n}]\xrightarrow[n\to\infty]{}0$. 
Finally, we obtain that each $a_{j,n}\xrightarrow[n\to\infty]{}0$, $j=1,\ldots,k$. Moreover, for each $j=1,\ldots,k$, ${k\choose j}{{n-k}\choose{k-j}}$ is of the same order as $n^j$ as $n\to\infty$ so we readily obtain that 
$$\var\left(\sum_{I\in\mathcal P_{n,k}}Z_{I,n}\right) \xrightarrow[n\to\infty]{}0. $$
The fact that $F_n(t)\xrightarrow[n\to\infty]{}0$ in probability then follows from $\phi$ being twice differentiable at $\theta^*$. 

Now, the rest of the proof is almost identical to that of Theorem~\ref{thm:asympt_norm}, with the only difference that, using Theorem~\ref{thm:CLT_U}, an extra $k$ factor will appear in the asymptotic behavior of the minimizer $t_n^*$ of $\DS \frac{\sqrt n}{{n\choose k}}t^\top\sum_{I\in\mathcal P_{n,k}}g(X_I,\theta^*)+\frac{1}{2}t^\top St$.

\section{Miscellaneous results} \label{sec:misc}

Let us give yet a second corollary to Lemma~\ref{lemma:Rockafellar}, that allows to go from pointwise to uniform convergence, in $L^p$ sense ($p\geq 1$).

\begin{corollary} \label{cor:Rockafellar_Lp}
	Let $p\geq 1$. Let $f,f_1,f_2,\ldots$ be random convex functions defined on an open convex set $G_0\subseteq \R^d$. Assume that for all $n\geq 1$ and all $t\in G_0$, $f_n(t)\in L^p(\PP)$. Assume also that
	$\E[|f_n(t)-f(t)|^p]\xrightarrow[n\to\infty]{} 0$ for all $t\in G_0$. Then, for all compact sets $K\subseteq G_0$, $\E[\sup_K|f_n-f|^p]\xrightarrow[n\to\infty]{} 0.$ 
\end{corollary}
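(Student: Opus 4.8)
The plan is to deduce this from the in‑probability version of Corollary~\ref{cor:Rockafellar_as} together with a uniform integrability argument, where uniform integrability is obtained by dominating $\sup_K|f_n-f|$ by an $L^p$‑convergent sequence; the only non‑soft ingredient will be an elementary ``finite‑dimensionality'' estimate for convex functions on a compact set.

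First, since $L^p$ convergence implies convergence in probability, the hypothesis gives $f_n(t)\to f(t)$ in probability for every $t\in G_0$; moreover each $f(t)$, being an $L^p$‑limit, lies in $L^p(\PP)$. Hence Corollary~\ref{cor:Rockafellar_as} yields $Z_n:=\sup_K|f_n-f|\to 0$ in probability for every compact $K\subseteq G_0$ (the case $K=\emptyset$ being trivial). By Vitali's convergence theorem it then suffices to show that $\{Z_n^p:n\ge 1\}$ is uniformly integrable, and for that it is enough to find random variables $V_n\ge Z_n$ with $V_n\to V$ in $L^p$ for some fixed $V\in L^p(\PP)$: an $L^p$‑convergent sequence is uniformly $p$‑integrable, the same then holds for $\{(V_n+V)^p\}$, hence for the dominated family $\{Z_n^p\}$, and combined with $Z_n\to0$ in probability this gives $\E[Z_n^p]\to 0$.

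The deterministic ingredient I would isolate and prove is: for a non‑empty compact $K\subseteq\R^d$ contained in an open convex set $G_0$, there exist finitely many points $s_1,\dots,s_N\in G_0$ and a constant $C_1<\infty$, depending only on $K$ and $G_0$, such that $\sup_{x\in K}|g(x)|\le C_1\sum_{j=1}^N|g(s_j)|$ for every finite convex function $g:G_0\to\R$. Granting this, I would apply it to the convex functions $f_n(\omega,\cdot)$ and $f(\omega,\cdot)$, obtaining $Z_n\le W_n+W$ with $W_n:=C_1\sum_{j}|f_n(s_j)|$ and $W:=C_1\sum_{j}|f(s_j)|$. Since there are only finitely many $s_j$ and $f_n(s_j)\to f(s_j)$ in $L^p$, we get $W_n\to W$ in $L^p$ with $W\in L^p(\PP)$ fixed, and the previous paragraph applies with $V_n=W_n$, $V=W$.

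Finally, to prove the deterministic estimate: replacing $K$ first by a closed $\delta$‑neighbourhood and then by its convex hull (both still inside $G_0$ for $\delta$ small, since $K$ is compact and $G_0$ open convex), one may assume $K$ is a compact convex body. By the standard approximation of a convex body from outside by polytopes (finitely many supporting halfspaces), pick a polytope $P$ with $K\subseteq\inter(P)\subseteq P\subseteq G_0$; let $t_1,\dots,t_m$ be its vertices, fix $x_0\in\inter(P)$ with $B(x_0,\rho)\subseteq P$, and put $D=\mathrm{diam}(P)$. Convexity gives the upper bound $g(x)\le\max_j g(t_j)$ on $P$; for the lower bound, write $x_0$ as a convex combination $(1-\mu)x+\mu x'$ of any $x\in P$ with a point $x'\in\partial B(x_0,\rho)\subseteq P$, where $1-\mu\ge\rho/(\rho+D)$, so convexity of $g$ bounds $g(x)$ below by a fixed multiple of $-(|g(x_0)|+\max_j|g(t_j)|)$. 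Taking $\{s_1,\dots,s_N\}=\{t_1,\dots,t_m,x_0\}$ and $C_1$ the resulting geometric constant completes the estimate. This polytope‑plus‑interior‑point construction (in particular getting a two‑sided control of a convex function on a polytope, which fails with the vertices alone) is the step I expect to require the most care; the rest — Corollary~\ref{cor:Rockafellar_as}, Vitali, and the uniform‑integrability bookkeeping — is routine.
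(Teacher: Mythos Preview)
Your proof is correct. Both your argument and the paper's rest on the same deterministic backbone---controlling a convex function on a compact set by its values at finitely many points, built from a polytope sandwiched between $K$ and $G_0$---but you package the probabilistic part differently. The paper argues directly: it fixes an $\varepsilon$-net of $K$, uses convexity along chords hitting $\partial P$ to bound the oscillation $|f_n(t)-f_n(t_j)|$ by $(\varepsilon/\eta)(\max_P f_n-\min_j f_n(t_j))$, applies the triangle inequality, takes $L^p$ norms, and lets $\varepsilon\to 0$. Your route is more modular: you prove once and for all the bound $\sup_K|g|\le C_1\sum_j|g(s_j)|$ with a \emph{fixed} finite set (vertices of $P$ plus one interior point), which immediately gives a dominating $L^p$-convergent sequence $W_n+W$; then Corollary~\ref{cor:Rockafellar_as} plus Vitali's theorem finishes the job. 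Your approach trades the $\varepsilon$-net bookkeeping for a standard soft-analysis lemma, and your deterministic estimate is slightly sharper in that it needs no refinement parameter; the paper's argument, on the other hand, is self-contained and does not invoke the in-probability corollary or Vitali.
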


\begin{proof}

	Let $K\subseteq G_0$ be a compact set. Since $K$ is compact, so is its convex hull, by \cite[Theorem 1.1.11]{schneider2014convex}. Hence, without loss of generality, in the sequel, let us assume that $K$ is convex.

Since $G_0$ is open, there exists $\eta>0$ satisfying that $K^{2\eta}:=\{x\in\R^d:\dist(x,K)\leq 2\eta\}\subseteq G_0$. Moreover, there exists a convex polytope $P$ with $K^{\eta}\subseteq P\subseteq K^{2\eta}$, see \cite{bronshteyn1975approximation}. 
Let $v_1,\ldots,v_r$ ($r\geq 1$) the vertices of $P$. 

Fix $\varepsilon>0$ with $\varepsilon\leq \eta/2$ and let $t_1,\ldots,t_N\in K$ (with $N\geq 1$) be an $\varepsilon$-approximation of $K$, that is, such that for all $t\in K$, $\|t-t_j\|\leq \varepsilon$ for some $j\in\{1,\ldots,N\}$. 

Let $t\in K$ and $j\in\{1,\ldots,N\}$ satisfying $\|t-t_j\|\leq \varepsilon$. Assume for now that $t\neq t_j$ and let $z_-$ and $z_+$ be the two points at the intersection of $\partial P$ and the line passing through $t$ and $t_j$, that is, 
$$z_-=t_j+\lambda_-(t-t_j)$$
and
$$z_+=t+\lambda_+(t_j-t)$$
for some $\lambda_-,\lambda_+\geq 1$. Note that
$\lambda_-=\frac{\|z_--t_j\|}{\|t-t_j\|}=\frac{\|z_--t\|}{\|t-t_j\|}+1\geq 1+\frac{\eta}{\varepsilon}>1$ and that $\lambda_+= \frac{\|z_+-t\|}{\|t-t_j\|}=\frac{\|z_+-t_j\|}{\|t-t_j\|}+1\geq 1+\frac{\eta}{\varepsilon}>1$. For each $n\geq 1$, convexity of $f_n$ yields, on the one hand, that 
\begin{equation} \label{eqn:proc86}
f_n(t)-f_n(t_j) \leq (1/\lambda_-)\left(f_n(z_-)-f_n(t_j)\right) \leq (1/\lambda_-)\left(\max_P f_n-f_n(t_j)\right) \leq \frac{\varepsilon}{\eta}\left(\max_P f_n-f_n(t_j)\right)
\end{equation}
and that 
$$f_n(t_j) \leq (1-1/\lambda_+)f_n(t)+(1/\lambda_+)f_n(z_+),$$
so, dividing both sides by $(1-1/\lambda_+)$ and subtracting $f_n(t_j)$, 
\begin{align} 
f_n(t)-f_n(t_j) & \geq (1/\lambda_+)(1-1/\lambda_+)^{-1}\left(f_n(t_j)-f_n(z_+)\right) \nonumber \\
	& = \frac{1}{\lambda_+-1}\left(f_n(t_j)-f_n(z_+)\right) \nonumber \\
	& \geq \frac{1}{\lambda_+-1}\left(f_n(t_j)-\max_P f_n\right) \nonumber \\
	& \geq \frac{\varepsilon}{\eta}\left(f_n(t_j)-\max_P f_n\right). \label{eqn:proc87}
\end{align}

Let $M_n=\max(f_n(v_1),\ldots,f_n(v_r))$ and $m_n=\min(f_n(t_1),\ldots,f_n(t_N))$. Convexity of $f_n$ yields that $\max_P f_n\leq M_n$ and we obtain, from \eqref{eqn:proc86} and \eqref{eqn:proc87}, that
\begin{equation*}
	|f_n(t)-f_n(t_j)|\leq \frac{\varepsilon}{\eta}(M_n-m_n).
\end{equation*}
Similarly for $f$, we have that 
\begin{equation*}
	|f(t)-f(t_j)| \leq \frac{\varepsilon}{\eta}(M-m)
\end{equation*}
where we set $M=\max(f(v_1),\ldots,f(v_p))$ and $m=\min(f(t_1),\ldots,f(t_N))$.

Finally, writing $|f_n(t)-f(t)|\leq |f_n(t)-f_n(t_j)|+|f_n(t_j)-f(t_j)|+|f(t)-f(t_j)|$, we obtain:
\begin{equation*}
	\sup_{t\in K}|f_n(t)-f(t)| \leq \frac{\varepsilon}{\eta}(M_n-m_n+M-m)+\max_{1\leq j\leq N}|f_n(t_j)-f(t_j)|.
\end{equation*}
Now, raising to the power $p$ and taking the expectation on both sides, we obtain that 
\begin{align*}
	\E[\sup_{t\in K}|f_n(t)-f(t)|^p] & \leq \frac{5^{p-1}\varepsilon}{\eta}\left(\E[|M_n|^p]+\E[|m_n|^p]+\E[|M|^p]+\E[|m|^p]\right) \\
	& \quad \quad \quad \quad +5^{p-1}\sum_{j=1}^N\E\left[|f_n(t_j)-f(t_j)|^p\right]
\end{align*}
where we used the fact that $(a_1+a_2+a_3+a_4+a_5)^p\leq 5^{p-1}(a_1^p+a_2^p+a_3^p+a_4^p+a_5^p)$ for all positive numbers, $a_1,a_2,a_3,a_4,a_5$ and we bounded the maximum of non-negative numbers by their sum in the last term. Now, the assumption implies that, for large enough, each term in the last sum can be bounded by $\varepsilon/N$ and, hence, the whole right hand side can be bounded by $C\varepsilon$ for some positive constant $C$. Since $\varepsilon>0$ was any (small enough) positive number, this implies that $\E\left[\sup_{t\in K}|f_n(t)-f(t)|^p\right]\xrightarrow[n\to\infty]{}0$. 
\end{proof}

\begin{lemma} \label{lem:convergence_proba}
Let $(Z_n)_{n\geq 1}$ be a sequence of real random variables satisfying that $\rho_nZ_n\xrightarrow[n\to\infty]{} 0$ in probability, for any choice of non-decreasing sequence $(\rho_n)_{n\geq 1}$ of positive numbers, diverging to $\infty$ as $n\to\infty$. Then, $P(Z_n\neq 0)\xrightarrow[n\to\infty]{}0$. 
\end{lemma}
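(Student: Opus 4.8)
The plan is to argue by contraposition: assuming $P(Z_n\neq 0)\not\to 0$, I will construct one non-decreasing sequence $(\rho_n)_{n\geq 1}$ of positive numbers diverging to $\infty$ for which $\rho_n Z_n$ fails to converge to $0$ in probability, contradicting the hypothesis.

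\emph{Extraction step.} From $\limsup_{n\to\infty}P(Z_n\neq 0)>0$ I extract a strictly increasing sequence of indices $(n_k)_{k\geq 1}$ and a constant $\varepsilon_0>0$ such that $P(Z_{n_k}\neq 0)\geq \varepsilon_0$ for all $k$. Since $\{Z_{n_k}\neq 0\}=\bigcup_{m\geq 1}\{|Z_{n_k}|>1/m\}$ is an increasing union, continuity of $P$ from below yields, for each $k$, an integer $m_k\geq 1$ with $P(|Z_{n_k}|>1/m_k)\geq \varepsilon_0/2$. This step is where the possibility that $Z_{n_k}$ is nonzero but arbitrarily small is dealt with; it is the only point requiring a small amount of care.

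\emph{Construction of $\rho$.} Put $a_k=\max(k,m_k)$ and $b_k=\max(a_1,\dots,a_k)$, so $(b_k)_{k\geq 1}$ is non-decreasing with $b_k\geq k\to\infty$. Define $\rho_n=b_1$ for $n<n_1$ and $\rho_n=b_k$ for $n_k\leq n<n_{k+1}$; the intervals $[n_k,n_{k+1})$ partition $\{n\geq n_1\}$ because $(n_k)$ is strictly increasing, so $\rho$ is well defined. It is positive since $b_1\geq a_1\geq 1$, non-decreasing since $n\mapsto k(n)$ and $k\mapsto b_k$ are non-decreasing, and it diverges since $\rho_n\geq b_{k(n)}\geq k(n)\to\infty$. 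The key point is that $\rho_{n_k}=b_k\geq a_k\geq m_k$, so on the event $\{|Z_{n_k}|>1/m_k\}$ one has $|\rho_{n_k}Z_{n_k}|>\rho_{n_k}/m_k\geq 1$. Hence $P(|\rho_{n_k}Z_{n_k}|>1)\geq P(|Z_{n_k}|>1/m_k)\geq \varepsilon_0/2$ for every $k$, which shows that $\rho_nZ_n\not\to 0$ in probability. Applying the hypothesis to this particular $(\rho_n)$ gives a contradiction, so $P(Z_n\neq 0)\to 0$, as claimed.

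The argument is elementary; the only mildly delicate ingredient is making sure the constructed sequence is genuinely non-decreasing over \emph{all} indices $n$ (not merely along the subsequence $(n_k)$), which is handled by passing to the monotone envelope $b_k=\max_{j\leq k}a_j$ and using a step construction on the blocks $[n_k,n_{k+1})$.
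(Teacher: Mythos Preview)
Your proof is correct and follows essentially the same contrapositive strategy as the paper: extract a subsequence along which $P(Z_{n_k}\neq 0)\geq\varepsilon_0$, find thresholds $1/m_k$ with $P(|Z_{n_k}|>1/m_k)\geq\varepsilon_0/2$, and build a non-decreasing diverging $(\rho_n)$ that violates the hypothesis. Your construction is in fact slightly more self-contained than the paper's, since you force divergence directly via $a_k=\max(k,m_k)$ and the monotone envelope $b_k$, whereas the paper first argues that the thresholds must tend to zero (using that $Z_n\to 0$ in probability) before building $\rho$.
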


\begin{proof}
Assume, for the sake of contradiction, that $P(Z_n\neq 0)$ does not go to $0$ as $n\to\infty$. That is, there is some $\varepsilon$ and an increasing sequence $(k_n)_{n\geq 1}$ of positive integers such that $P(Z_{k_n}\neq 0)\geq \varepsilon$ for all $n\geq 1$. For each $n\geq 1$, since the map $t\in\R\mapsto P(|Z_{k_n}|> t)$ is right-continuous, so there must exist some $\alpha_n>0$ with the property that $P(|Z_{k_n}|> \alpha_n)\geq\varepsilon/2$. Since $Z_n\xrightarrow[n\to\infty]{} 0$ in probability by assumption, the sequence $(\alpha_n)_{n\geq 1}$ must converge to $0$. Since we could extract a non-increasing subsequence of it, let us assume, for simplicity, that $(\alpha_n)_{n\geq 1}$ is non-increasing. Then, one can choose a non-decreasing sequence $(\rho_n)_{n\geq 1}$ of positive numbers, such that $\rho_{k_n}=1/\alpha_n$ for all $n\geq 1$. The assumption implies that $\rho_{k_n}Z_{k_n}$ must converge to $0$ in probability, as a subsequence of $(\rho_nZ_n)_{n\geq 1}$. Since this is not the case by construction, we obtain a contradiction. 
\end{proof}

%
%
%

\end{document}